\newcommand{\bgset}[1]{\big\{#1\big\}}
\newcommand{\F}{{\mathcal F}}
\newcommand{\N}{\mathcal N}
\newenvironment{enumroman}{\begin{enumerate}

}{\end{enumerate}}
\newtheorem{lemma}{Lemma}[section]
\newtheorem{proposition}[lemma]{Proposition}
\newtheorem{theorem}[lemma]{Theorem}
\newtheorem{conjecture}[lemma]{Conjecture}
\theoremstyle{definition}
\theoremstyle{remark}
\newtheorem{remark}[lemma]{Remark}
\numberwithin{equation}{section}
\title{Prescribed energy solutions of concave-convex type problems involving sign-changing or vanishing weights \bf\thanks{{\em MSC2020:} Primary 58E05, Secondary 35A15, 49J27, 58E07
\newline \indent\; {\em Key Words and Phrases: } prescribed energy problem, concave-convex, sign-changing weights}}
\author{\bf Kanishka Perera\\
Department of Mathematics\\
Florida Institute of Technology\\
150 W University Blvd, Melbourne, FL 32901-6975, USA\\
\em kperera@fit.edu\\
[\medskipamount]
\bf Humberto Ramos Quoirin\\
	CIEM-Conicet\\
    Universidad Nacional de C\'{o}rdoba\\
(5000) C\'{o}rdoba, Argentina\\
	\em humbertorq@gmail.com\\
[\medskipamount]
\bf Kaye Silva\\
Instituto de Matem\'{a}tica e Estat\'{i}stica\\
Universidade Federal de Goi\'{a}s\\
Rua Samambaia, 74001-970 Goi\^{a}nia, GO, Brazil\\
\em kayesilva@ufg.br}
\date{}
\begin{document}

\maketitle

\begin{abstract} 
We provide an abstract approach to find couples $(\lambda,u) \in \mathbb{R} \times X$ satisfying $$\Phi_\lambda(u)=c \quad \mbox{and} \quad \Phi'_\lambda(u)=0,$$
for some suitable values of $c \in \mathbb{R}$. Here $\Phi_\lambda$ is a $C^1$ functional (set on a Banach space $X$) whose main prototype is the energy functional associated to a concave-convex problem with sign-changing or vanishing weights. This approach allows us to derive several existence, multiplicity and bifurcation type results for the equation $\Phi'_\lambda(u)=0$ with $\lambda$ fixed.
\end{abstract}

\begin{center}
	\begin{minipage}{12cm}
		\tableofcontents
	\end{minipage}
\end{center}

\newpage

\section{Introduction}

\subsection{Concave-convex problems with sign-changing or vanishing weights} 

In this work we deal with nonlinear elliptic problems having a concave-convex type structure. The model equation for us is the boundary value problem

\begin{equation}
	\label{quasi1}
	\begin{cases}
		-\Delta_p u=\lambda a(x)|u|^{\alpha-2}u +b(x)|u|^{\beta-2}u &\mbox{ in } \Omega, \\
		u=0 &\mbox{ on } \partial \Omega,
	\end{cases}
\end{equation}
where $1<\alpha <p<\beta<p^*$, $\Omega$ is a bounded domain of $\mathbb{R}^N$ and $\lambda\in \mathbb{R}$ is a parameter. We assume, for simplicity, that $a, b \in L^{\infty}(\Omega)$. In the particular case where $a \equiv b \equiv 1$ and $p = 2$, problem \eqref{quasi1} reduces to the classical Ambrosetti--Brezis--Cerami problem~\cite{AmBrCe}. Here we are mainly interested in the case where $a$ and $b$ can vanish or change sign.

A pioneering contribution addressing equation~\eqref{quasi1} with sign-changing weights $a$ and $b$ is due to~\cite{FiGoUb}, where the existence of two nontrivial nonnegative solutions was established for small values of $\lambda>0$. To this end, the authors combine the Mountain Pass Theorem, local minimization techniques, and the method of lower and upper solutions. Let us mention that the results of \cite{FiGoUb} hold for a larger class of nonlinearities, and include non-existence results as well.

Subsequently, the methods in~\cite{FiGoUb} were simplified in~\cite{BrWu} (see also~\cite{BrWu1}) for the powerlike case \eqref{quasi1}. In that setting, the existence of two positive solutions for small $\lambda > 0$ was obtained by minimizing the energy functional over two disjoint connected components of the Nehari manifold.

Regarding the existence of infinitely many solutions of \eqref{quasi1} with $a \equiv b \equiv 1$, we refer to \cite{AmBrCe,BW} for the case $p=2$ and to \cite{GP} for the case $\beta=p^*$. 
To the best of our knowledge, the only result of this nature for equation~\eqref{quasi1} with $a$ and $b$ changing sign was proved in~\cite{Gu}. There, under the assumption that the set $\{x \in \Omega : a(x) > 0\}$ has nonempty interior, a sequence of solutions with negative energy was constructed for all $\lambda > 0$. Let us also mention \cite{AGP,BM} for some bifurcation results when $a \equiv b \equiv 1$ and $\Omega$ is either a ball or an annulus.

For related equations with similar concave--convex nonlinearities, we highlight ~\cite{Wu}, which deals with the operator $-\Delta u + u$ in $\mathbb{R}^N$, and~\cite{GoSr}, which studies an equation involving the fractional $p$-Laplacian operator on a bounded domain. In both cases, the existence of finitely many solutions was proven. We also refer the reader to~\cite{AmBoPe,Ha,Il,LeQuSi,KaQuUm,Pa,SiMa}, where several results are established for problems similar to \eqref{quasi1}, typically under the assumption that either $a$ or $b$ does not change sign.

Our proposal in this work is to look for couples $(\lambda,u)$ that solve \eqref{quasi1} and, in addition, satisfy $\Phi_\lambda(u)=c$, for a given $c \in \mathbb{R}$. Here $\Phi_\lambda$ is the energy functional associated to \eqref{quasi1}, i.e.
\begin{equation}\label{dephi}
    \Phi_\lambda(u)=\frac{1}{p}\int_\Omega |\nabla u|^p dx - \frac{\lambda}{\alpha}\int_\Omega a(x)|u|^{\alpha} dx- \frac{1}{\beta}\int_\Omega b(x)|u|^{\beta} dx, \quad u \in W_0^{1,p}(\Omega).  
\end{equation}

We shall follow an abstract approach based on Nehari subsets and their topological properties, which can be used to study many other problems in addition to \eqref{quasi1}. As a consequence we shall prove the existence of infinitely many solutions (in some case two sequences of solutions) of \eqref{quasi1} for some fixed $\lambda$. We aim at extending the results of \cite[Section 5]{MR4736027} (see also \cite{LeQuSi}), which apply in particular to \eqref{dephi} with $a,b>0$ in $\Omega$. We shall now treat functionals containing homogeneous terms that can vanish and change sign.

\subsection{Main abstract result - the prescribed energy problem}

Given a uniformly convex Banach space $X$, equipped with $\|\cdot\|\in C^1(X\setminus\{0\})$, we consider the functional
\begin{equation*}
	\Phi_\lambda=I_1-\lambda I_2,
\end{equation*}
where $\lambda\in \mathbb{R}$ is a given parameter, and $I_1,I_2\in C^1(X)$ are even functionals with $I_1(0)=I_2(0)=0$. The {\it prescribed energy problem} for this family of functionals consists in finding critical points of $\Phi_\lambda$ at a prescribed critical level. More precisely, given $c \in \mathbb{R}$ we look for couples $(\lambda,u)\in \mathbb{R}\times (X\setminus\{0\})$ such that 
\begin{equation}\label{PEP}
\tag{PEP}\Phi_\lambda(u)=c \quad \mbox{and} \quad \Phi'_\lambda(u)=0.
\end{equation} 
Let $u\in X$ be such that $I_2(u)\neq 0$. We see that
\begin{equation*}
\Phi_\lambda(u)=c\quad \mbox{if, and only if},\quad \lambda=\lambda(c,u):=\frac{I_1(u)-c}{I_2(u)}.
\end{equation*}
Moreover
\begin{equation}\label{derivatives}
	\frac{\partial \lambda}{\partial u}(c,u)=\frac{\Phi'_{\lambda(c,u)}(u)}{I_2(u)}.
\end{equation}
Therefore we conclude that
if 	$\lambda=\lambda(c,u)$ and 	$ \frac{\partial \lambda}{\partial u}(c,u)=0$ then $\Phi_\lambda(u)=c$ and   $\Phi_\lambda'(u)=0$.

Given $u\in X$ such that $I_2(u)\neq 0$ and $c\in \mathbb{R}$, let us set
\begin{equation*}
	\varphi_{c,u}(t):=\lambda(c,tu), \quad \forall t>0.
\end{equation*}
We assume the following condition:

\begin{enumerate}
	\item[(H1)] There exists an open set $I\subset \mathbb{R}$ and an open cone $\mathcal{C} \subset \{u \in X: I_2(u)\neq 0\}$ such that:
	\begin{enumerate}
		\item[(a)] the map $(c,u,t)\mapsto \varphi_{c,u}'(t)$ belongs to $C^1(I\times \mathcal{C}\times(0,\infty))$;
		\item[(b)] for every $(c,u)\in I\times \mathcal{C}$ the map $\varphi_{c,u}$ has exactly one local minimizer $t^+(c,u)>0$ of Morse type or for every $(c,u)\in I\times \mathcal{C}$ the map $\varphi_{c,u}$ has exactly one local maximizer $t^-(c,u)>0$ of Morse type. 
	\end{enumerate}
\end{enumerate}

We denote
\begin{equation*}
\mathcal{N}_c^\pm:=\{t^{\pm}(c,u)u:\ u\in \mathcal{C}\},
\end{equation*}
and note, by condition $(H1)$, that $\mathcal{N}_c^\pm$ is a $C^1$-Finsler manifold  contained in the Nehari set
\begin{equation*}
	\mathcal{N}_c:=\left\{u\in\mathcal{C}:\ 	\frac{\partial \lambda}{\partial u}(c,u)u=0\right\}.
\end{equation*}
Let us write, for simplicity, $t(c,u)=t^\pm(c,u)$, and set
\begin{equation*}
	\Lambda(c,u):=\varphi_{c,u}(t(c,u))=\lambda(c,t(c,u)u), \quad \forall u \in \mathcal{C}.
\end{equation*}
Condition $(H1)$ implies that for each $c\in I$, the functional $u \mapsto \Lambda(c,u)$ belongs to $C^1(\mathcal{C})$ and
\begin{equation}\label{equi}
	\frac{\partial \Lambda}{\partial u}(c,u)=0\quad \mbox{if, and only if}\quad 	\frac{\partial \lambda}{\partial u}(c,t(c,u)u)=0.
\end{equation}
Observe that $\Lambda(c,u)$ is the restriction of $\lambda(c,\cdot)$ to $\mathcal{N}_c^\pm$. Indeed, it is clear by $(H1)$ that $\Lambda(c,u)=\varphi_{c,u}(1)=\lambda(c,u)$ if $u\in \mathcal{N}_c^\pm$. Let us denote by
\begin{equation*}
	\mathcal{S}=\{u\in X: \|u\|=1\}
\end{equation*}
the unit sphere of $X$ and set
\begin{equation*}
 	\mathcal{S}_\mathcal{C}=\mathcal{S}\cap \mathcal{C}.
 \end{equation*}
Then $\mathcal{S}_\mathcal{C}$ is a $C^1$-Finsler manifold, symmetric and, by $(H1)$, it is diffeomorphic to $\mathcal{N}_c^\pm$ through the map $u \mapsto t(c,u)u$, $u\in \mathcal{S}_\mathcal{C}$. The proof of these facts is just an application of the Implicit Function Theorem (see \cite[Section 3]{MR4736027} where $X$ has to be replaced by $\mathcal{C}$).

A crucial difference between our situation and the one considered in \cite{MR4736027} is the fact that now $\mathcal{N}_c^\pm$ (and possibly $	\mathcal{S}_\mathcal{C}$) does not need to be a complete manifold with respect to the Finsler Metric.

From the previous discussion it follows that we can find couples $(\lambda,u)\in \mathbb{R}\times (X\setminus\{0\})$ solving $\Phi_\lambda(u)=c$ and $\Phi'_\lambda(u)=0$ by looking for critical points of the map $u \mapsto \Lambda(c,u)$. Since this map is $0$-homogeneous, we shall deal with its restriction to $\mathcal{S}_\mathcal{C}$, i.e. the map
\begin{equation*}
	\widetilde{\Lambda}(c,\cdot)=\Lambda_{|\mathcal{S}_\mathcal{C}}(c,\cdot). 
\end{equation*}

Let $\F$ denote the class of closed and symmetric subsets of $\mathcal{S}_\mathcal{C}$. Given $M\in \F$ let $\gamma(M)$ denote its Krasnoselskii genus. For $k \ge 1$ denote $$\F_{k} = \bgset{M \in \F : M\ \mbox{is compact and}\  \gamma(M) \ge k}.$$
We set
\begin{equation*}
	\gamma(\mathcal{S}_\mathcal{C}):=\sup\{k\in \mathbb{N}:\ \F_k\neq \emptyset\}
\end{equation*}
and
\begin{equation} \label{50}
	\lambda_{c,k}:= \inf_{M \in \F_{k}}\, \sup_{u \in M}\, \widetilde{\Lambda}(c,u),\ \mbox{for } c \in I,\mbox{ and } k\le \gamma(\mathcal{S}_\mathcal{C}).
\end{equation}
The following assumptions will be used to show that $\lambda_{c,k}$ is a critical level to $\widetilde{\Lambda}(c,\cdot)$:

\begin{enumerate}
	\item[(H2)] 
	
	\begin{enumerate}
		\item[(a)] for any $c\in I$, the functional $u \mapsto \widetilde{\Lambda}(c,u)$ is bounded from below in $\mathcal{S}_\mathcal{C}$;
		\item[(b)] for any $c\in I$ and $k\le \gamma(\mathcal{S}_\mathcal{C})$ the functional $u \mapsto \widetilde{\Lambda}(c,u)$ satisfies the Palais--Smale condition at the level $\lambda_{c,k}$;
		\item[(c)] if $(u_n)\subset \mathcal{S}_\mathcal{C}$ satisfies $I_2(u_n)\to 0$, then $\widetilde{\Lambda}(c,u_n)\to \infty$.
	\end{enumerate}
\end{enumerate}
We note that $(H2)$-$(c)$ provides us a control of $\widetilde{\Lambda}(c,\cdot)$ near the ``boundary" of $S_{\mathcal{C}}$.

\begin{theorem}\label{thm1} Suppose $(H1)$ and $(H2)$, and let $\lambda_{c,k}$ be given by \eqref{50}. Then for any $c \in I$ and $1 \le k\le \gamma(\mathcal{S}_\mathcal{C})$ there exists $u_{c,k}\in \mathcal{C}$ such that 
	\begin{equation*}
		\Phi_{\lambda_{c,k}}(\pm u_{c,k})=c\ \ \mbox{and}\ \ 	\Phi_{\lambda_{c,k}}'(\pm u_{c,k})=0.
	\end{equation*}
Moreover, if $\gamma(\mathcal{S}_\mathcal{C})=\infty$ and $\widetilde{\Lambda}(c,\cdot)$ satisfies the Palais--Smale condition at any level, then $(\lambda_{c,k})$ is a nondecreasing unbounded sequence.
\end{theorem}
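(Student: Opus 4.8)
The plan is to translate \eqref{PEP} into an equivariant minimax problem for the even $C^1$ functional $\widetilde{\Lambda}(c,\cdot)$ on the symmetric Finsler manifold $\mathcal{S}_\mathcal{C}$, and then run a Lusternik--Schnirelmann scheme based on the genus. First I would record the variational correspondence already prepared in the excerpt. Since $\Lambda(c,\cdot)$ is $0$-homogeneous, a critical point $u$ of $\widetilde{\Lambda}(c,\cdot)$ on $\mathcal{S}_\mathcal{C}$ is a critical point of $\Lambda(c,\cdot)$ on $\mathcal{C}$; by \eqref{equi} this gives $\frac{\partial \lambda}{\partial u}(c,t(c,u)u)=0$, and then \eqref{derivatives} together with $I_2(t(c,u)u)\neq 0$ yields $\Phi'_{\lambda(c,u)}(t(c,u)u)=0$, while $\lambda(c,t(c,u)u)=\widetilde{\Lambda}(c,u)$ gives $\Phi_{\widetilde{\Lambda}(c,u)}(t(c,u)u)=c$. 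Thus, for a critical point $u_\ast\in\mathcal{S}_\mathcal{C}$ at level $\lambda_{c,k}$, the element $u_{c,k}:=t(c,u_\ast)u_\ast\in\mathcal{C}$ will solve \eqref{PEP}, and evenness of $\widetilde{\Lambda}(c,\cdot)$ handles the sign $\pm$. By $(H2)$-$(a)$ the functional is bounded below, so each $\lambda_{c,k}$ with $1\le k\le\gamma(\mathcal{S}_\mathcal{C})$ is a finite real number: it is finite from above because the elements of $\F_k$ are compact and $\widetilde{\Lambda}(c,\cdot)$ is continuous, and it is bounded below by $\inf_{\mathcal{S}_\mathcal{C}}\widetilde{\Lambda}(c,\cdot)>-\infty$. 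The inclusions $\F_{k+1}\subset\F_k$ make $k\mapsto\lambda_{c,k}$ nondecreasing, and $\F_k\neq\emptyset$ for $k\le\gamma(\mathcal{S}_\mathcal{C})$ by the very definition of $\gamma(\mathcal{S}_\mathcal{C})$.

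The heart of the argument is to show that each $\lambda_{c,k}$ is a critical value of $\widetilde{\Lambda}(c,\cdot)$. I would argue by contradiction through an odd deformation: if $\lambda_{c,k}$ carried no critical point, an equivariant deformation lemma would produce $\eps>0$ and an odd continuous map $\eta$ pushing $\{\widetilde{\Lambda}(c,\cdot)\le\lambda_{c,k}+\eps\}$ into $\{\widetilde{\Lambda}(c,\cdot)\le\lambda_{c,k}-\eps\}$. Choosing $M\in\F_k$ with $\sup_M\widetilde{\Lambda}(c,\cdot)<\lambda_{c,k}+\eps$, the image $\eta(M)$ is again compact and symmetric with $\gamma(\eta(M))\ge\gamma(M)\ge k$ (the genus does not drop under an odd continuous map), so $\eta(M)\in\F_k$ while $\sup_{\eta(M)}\widetilde{\Lambda}(c,\cdot)\le\lambda_{c,k}-\eps$, contradicting the definition of $\lambda_{c,k}$ as an infimum. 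The main obstacle, and precisely where this setting departs from \cite{MR4736027}, is that $\mathcal{S}_\mathcal{C}$ need not be complete, so the pseudo-gradient flow underlying $\eta$ is not a priori globally defined. This is exactly what $(H2)$-$(c)$ is designed to control: since the deformation only decreases $\widetilde{\Lambda}(c,\cdot)$, the whole construction takes place inside a sublevel set $\{\widetilde{\Lambda}(c,\cdot)\le L\}$ with $L<\infty$, and $(H2)$-$(c)$ forces such a set to stay bounded away from the ``boundary'' $\{I_2=0\}$. Hence its closure in the complete sphere $\mathcal{S}$ still lies in $\mathcal{C}$, making the relevant region complete; there the Palais--Smale condition $(H2)$-$(b)$ supplies the usual quantitative deformation, the flow exists for the required time, and it stays inside $\mathcal{S}_\mathcal{C}$. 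With $\lambda_{c,k}$ shown to be critical, the recovery of $u_{c,k}$ from the first paragraph completes the first assertion.

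For the final statement, the monotonicity of $(\lambda_{c,k})$ follows once more from $\F_{k+1}\subset\F_k$. To prove unboundedness when $\gamma(\mathcal{S}_\mathcal{C})=\infty$ and the Palais--Smale condition holds at every level, I would suppose $\lambda_{c,k}\nearrow\lambda_\ast<\infty$. The critical set $K_{\lambda_\ast}$ is then compact by Palais--Smale, hence has finite genus $q:=\gamma(K_{\lambda_\ast})$ and, by the continuity property of the genus, admits a symmetric neighborhood $N$ with $\gamma(\overline{N})=q$. An odd deformation of $\{\widetilde{\Lambda}(c,\cdot)\le\lambda_\ast+\eps\}\setminus N$ into $\{\widetilde{\Lambda}(c,\cdot)\le\lambda_\ast-\eps\}$ (valid on the same confined, hence complete, region afforded by $(H2)$-$(c)$) is then combined with the subadditivity $\gamma(\overline{M\setminus N})\ge\gamma(M)-\gamma(N)$. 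Fixing $k$ so large that $\lambda_{c,k}>\lambda_\ast-\eps$ and taking $M\in\F_{k+q}$ with $\sup_M\widetilde{\Lambda}(c,\cdot)<\lambda_\ast+\eps$, the set $\eta(\overline{M\setminus N})$ has genus at least $k$ and lies in $\{\widetilde{\Lambda}(c,\cdot)\le\lambda_\ast-\eps\}$, forcing $\lambda_{c,k}\le\lambda_\ast-\eps$, a contradiction. I expect the only delicate point here to be the standard bookkeeping with the neighborhoods and closures in the subadditivity and odd-map estimates, which is routine once the completeness issue has been resolved as above. Therefore $(\lambda_{c,k})$ is nondecreasing and unbounded.
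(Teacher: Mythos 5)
Your proposal is correct and follows essentially the same route as the paper: a genus-based minimax over compact symmetric subsets of $\mathcal{S}_\mathcal{C}$, with $(H2)$-$(c)$ used to show the sublevel sets $\{\widetilde{\Lambda}(c,\cdot)\le \lambda\}$ are complete so that the Palais--Smale deformation is available, the correspondence \eqref{equi}--\eqref{derivatives} to recover $u_{c,k}$, and the same genus-counting contradiction (compactness and finite genus of $K_{\lambda_\ast}$) for unboundedness. The only difference is presentational: the paper delegates the deformation/minimax machinery to Szulkin's Theorem 3.1 and the deformation lemma of its appendix after verifying condition $(CS)$, whereas you carry out the odd-deformation arguments explicitly.
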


Let us note that our method does not provide us with solutions couples $(\lambda,u)$ of \eqref{PEP} satisfying $I_2(u)=0$. For such solutions the problem reduces to
\begin{equation}\label{pred}
I_1(u)=c \quad \mbox{and} \quad I_1'(u)=0,
\end{equation} 
and any $\lambda \in \mathbb{R}$ yields a solution couple of \eqref{PEP}. Note also that \eqref{pred}
 has a nontrivial solution only if $c>0$. In the model case \eqref{quasi1} with $a \ge 0$ such that $\Omega_0:=a^{-1}(0)$ is a smooth nonempty domain, the condition $I_2(u)=0$ corresponds to $\int_\Omega a(x)|u|^{\alpha}=0$ i.e. $u \in W_0^{1,p}(\Omega_0)$, so that the problem \eqref{pred} becomes
$$-\Delta_p u=b(x)|u|^{\beta-2}u, \quad u \in W_0^{1,p}(\Omega_0), \quad \int_{\Omega_0} |\nabla u|^p=\frac{\beta p c}{\beta-p}.$$

\subsection{Abstract concave-convex problems with sign-changing or vanishing weights}

As an application of Theorem \ref{thm1} we consider an abstract functional inspired by concave-convex problems with sign-changing weights (see for example \cite{BrWu1}). Let us deal with the class of functionals
\begin{equation}\label{dephi}
	\Phi_\lambda(u)=\frac{1}{\eta}N(u)-\frac{\lambda}{\alpha}A(u)-\frac{1}{\beta}B(u),\quad u\in X,
\end{equation}
where $1<\alpha<\eta<\beta$, and $N,A,B\in C^1(X)$ are even functionals satisfying the following additional conditions:

\begin{enumerate}
	\item[(C1)] $N,A,B$ are $\eta$-homogeneous, $\alpha$-homogeneous and $\beta$-homogeneous, respectively.
	\item[(C2)] There exists $C,C'>0$ such that $C'\|u\|^\eta \ge N(u)\ge C^{-1}\|u\|^\eta$, $|A(u)|\le C\|u\|^\alpha$ and $|B(u)|\le C\|u\|^ \beta$ for all $u\in X$.
	\item[(C3)] If $(\lambda_n) \subset \mathbb{R}$ and $(u_n) \subset X$ are bounded sequences such that $(\Phi_{\lambda_n}(u_n))$ is bounded and $\Phi_{\lambda_n}'(u_n)\to 0$, then $(u_n)$ has a convergent subsequence. 
\end{enumerate}
We shall deal with the sets $$\mathcal{C}_A:=\{u\in X:\ A(u)>0\} \quad \mbox{and} \quad \mathcal{C}_B:=\{u\in X:\ B(u)>0\},$$
which are open cones of $X$, in view of the continuous and homogeneous behavior of $A$ and $B$.
Let us note that the definition of $\gamma(\mathcal{C}_A)$ is similar to the one of $\gamma(\mathcal{S}_\mathcal{C})$. Moreover, by $(C1)$ it is clear that $\gamma(\mathcal{C}_A)=\gamma(S_{\mathcal{C}_A})$.
We are mainly interested in the case where $\gamma(\mathcal{C}_A)=\gamma(\mathcal{C}_B)=\gamma(\mathcal{C}_A \cap \mathcal{C}_B)=\infty$, which happens in our applications. However, our abstract results only require $\gamma(\mathcal{C}_A)>1$ or $\gamma(\mathcal{C}_A \cap \mathcal{C}_B)>1$.

\begin{remark} We point out that condition $(C1)$, together with the continuity of $N$, $A$, and $B$, implies the inequalities from above in condition $(C2)$ (see \cite[Proposition 1.1]{PeAgOr}). However, for the sake of clarity and simplicity, we will state $(C2)$ in this form.
	
\end{remark}

\begin{theorem}\label{thm2} Suppose $(C1)$-$(C3)$. Then there exist $c^*<0 <c^{**}$ such that: 
	\begin{enumerate}
		\item[i)] For any $1\le k\le \gamma(\mathcal{C}_A)$ and $c\in (c^*,0)$ there exist $\lambda_{c,k}^+>0$ and $v_{c,k}\in \mathcal{C}_A$ such that 
			\begin{equation*}
			\Phi_{\lambda_{c,k}^+}(v_{c,k})=c\ \ \mbox{and}\ \ 	\Phi_{\lambda_{c,k}^+}'(v_{c,k})=0.
		\end{equation*}
\item[ii)] For any $1\le k\le \gamma(\mathcal{C}_A)$ the map $c \mapsto \lambda_{c,k}^+$  is continuous and decreasing in $ (c^*,0)$, and satisfies $\displaystyle \lim_{c\to 0^-}\lambda_{c,k}^+=0$.
\item[iii)] If $\gamma(\mathcal{C}_A)=\infty$ then $(\lambda_{c,k}^+)$ is a nondecreasing unbounded sequence, i.e. $0< \lambda_{c,k}^+ \le \lambda_{c,k+1}^+ \to \infty$ as $k\to \infty$, for any $c\in (c^*,0)$.
\item[iv)] If $\gamma(\mathcal{C}_A)=\infty$ then for each $\lambda>0$ there exist sequences $(v_n)\subset \mathcal{C}_A$, $(c_n)\subset (c^*,0)$ and $(k_n)\subset \mathbb{N}$ such that $c_n\to 0$, $k_n\to \infty$, and
\begin{equation*}
		\lambda=\lambda_{c_n,k_n}^+, \ \		\Phi_{\lambda_{c_n,k_n}^+}(v_n)=c_n\ \ \mbox{and}\ \ 	\Phi_{\lambda_{c_n,k_n}^+}'(v_n)=0, \quad \mbox{for every } n.
			\end{equation*}
Moreover $v_n\to 0$ in $X$, so $(\lambda,0)$ is a bifurcation point for any $\lambda>0$.
\item[v)] For any $k\le \gamma(\mathcal{C}_A\cap \mathcal{C}_B)$ and $c\in (c^*,c^{**})$ there exist $\lambda_{c,k}^->0$ and $u_{c,k}\in \mathcal{C}_A\cap\mathcal{C}_B$ such that
\begin{equation*}
			\Phi_{\lambda_{c,k}^-}(u_{c,k})=c\ \ \mbox{and}\ \ 	\Phi_{\lambda_{c,k}^-}'(u_{c,k})=0.
		\end{equation*}
Moreover $\lambda_{c,k}^+<\lambda_{c,k}^-$ for every $c\in (c^*,0)$.
\item[vi)] For any $1\le k\le \gamma(\mathcal{C}_A\cap \mathcal{C}_B)$ the map $c \mapsto \lambda_{c,k}^-$ is continuous and decreasing in $(c^*,c^{**})$.
\item[vii)] If $\gamma(\mathcal{C}_A\cap \mathcal{C}_B)=\infty$ then  $(\lambda_{c,k}^-)$ is a nondecreasing unbounded sequence, , i.e. $0< \lambda_{c,k}^- \le \lambda_{c,k+1}^- \to \infty$ as $k\to \infty$, for any  $c\in (c^*,c^{**})$.
\end{enumerate}

\end{theorem}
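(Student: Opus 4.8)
The plan is to derive all seven assertions from Theorem~\ref{thm1}, applied with the identification $I_1=\tfrac1\eta N-\tfrac1\beta B$, $I_2=\tfrac1\alpha A$, and with \emph{two} different choices of cone: $\mathcal{C}=\mathcal{C}_A$ to produce the branch $\lambda^+$ through a local minimizer $t^+$ of the fibering map (first alternative in $(H1)(b)$), and $\mathcal{C}=\mathcal{C}_A\cap\mathcal{C}_B$ to produce the branch $\lambda^-$ through a local maximizer $t^-$ (second alternative). The entire analysis rests on the fibering map, which by $(C1)$ reads, for $A(u)>0$,
\[
\varphi_{c,u}(t)=\frac{\alpha}{A(u)}\Big(\tfrac1\eta N(u)\,t^{\eta-\alpha}-\tfrac1\beta B(u)\,t^{\beta-\alpha}-c\,t^{-\alpha}\Big),
\]
so that, after multiplication of $\varphi'_{c,u}(t)$ by the positive factor $t^{\alpha+1}A(u)/\alpha$, the positive critical points of $\varphi_{c,u}$ are exactly the positive zeros of
\[
g_{c,u}(t)=\tfrac{\eta-\alpha}{\eta}N(u)\,t^{\eta}-\tfrac{\beta-\alpha}{\beta}B(u)\,t^{\beta}+\alpha c .
\]
First I would carry out the elementary analysis of $g_{c,u}=G_u+\alpha c$, where $G_u(t)=\tfrac{\eta-\alpha}{\eta}N(u)t^\eta-\tfrac{\beta-\alpha}{\beta}B(u)t^\beta$ is independent of $c$ and, since $\alpha<\eta<\beta$, is governed entirely by the sign of $B(u)$.

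To verify $(H1)$ I would distinguish cases. When $B(u)\le 0$, the map $g_{c,u}$ is increasing with a single simple zero for every $c<0$, yielding one Morse local minimizer $t^+$; when $B(u)>0$, $G_u$ has a unique maximum $m(u):=\max_{t>0}G_u(t)>0$, and $g_{c,u}$ has two simple zeros $t^+<t^-$ exactly when $\alpha c+m(u)>0$, the smaller being a local minimizer and the larger a local maximizer. Setting $c^*:=-\tfrac1\alpha\inf\{m(u):u\in\mathcal{S}_{\mathcal{C}_A},\,B(u)>0\}$ and using that $N(u)\ge C^{-1}$, $|B(u)|\le C$ on $\mathcal{S}$ force $m(u)$ to be bounded below by a positive constant, I get $c^*<0$; for $c\in(c^*,0)$ this gives \emph{exactly one} local minimizer $t^+$ on all of $\mathcal{C}_A$, Morse type because the relevant zero of $g_{c,u}$ is simple. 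On $\mathcal{C}_A\cap\mathcal{C}_B$ one has $B(u)>0$ throughout, so $g_{c,u}$ has a unique local maximizer $t^-$ for every $c\in(c^*,\infty)$; evaluating the numerator of $\varphi_{c,u}(t^-)$ and using $g_{c,u}(t^\pm)=0$ (equivalently $G_u(t^\pm)=-\alpha c$), a short computation gives the numerator $I_1(t^\pm u)-c=\big((\beta-\eta)N(u)(t^\pm)^\eta/\eta-\beta c\big)/(\beta-\alpha)$, which is positive for $t^+$ whenever $c<0$ and for $t^-$ whenever $c$ lies below a positive threshold; this last threshold defines $c^{**}>0$ and simultaneously yields $\lambda_{c,k}^\pm>0$ on the stated $c$-ranges.

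Next I would check $(H2)$ on each cone. Part $(a)$ is immediate from the sign computation above, since $\widetilde{\Lambda}^\pm(c,\cdot)=\lambda^\pm>0$ there, hence bounded below by $0$. For $(c)$, observe that $t^\pm$ solve $G_u(t)=-\alpha c$, an equation \emph{not} involving $A$, so on the sphere $t^\pm$ stays in a compact subset of $(0,\infty)$; writing
\[
\Lambda(c,u)=\varphi_{c,u}(t^\pm)=\frac{\alpha\big(\tfrac1\eta N(u)(t^\pm)^\eta-\tfrac1\beta B(u)(t^\pm)^\beta-c\big)}{A(u)(t^\pm)^\alpha},
\]
the numerator stays bounded below by a positive constant while $I_2(u)\to0$, i.e. $A(u)\to0^+$, forces the quotient to $+\infty$. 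For $(b)$, a Palais--Smale sequence $(u_n)\subset\mathcal{S}_{\mathcal{C}}$ for $\widetilde{\Lambda}(c,\cdot)$ at level $\lambda_{c,k}$ lifts, through $v_n:=t(c,u_n)u_n$, to a sequence with $\Phi_{\lambda_n}(v_n)=c$, $\lambda_n:=\Lambda(c,u_n)\to\lambda_{c,k}$ bounded, and $\Phi'_{\lambda_n}(v_n)\to0$ by \eqref{derivatives} and \eqref{equi}; then $(C2)$ bounds $(v_n)$ and $(C3)$ produces a convergent subsequence, which descends to a convergent subsequence of $(u_n)$ on $\mathcal{S}$. With $(H1)$--$(H2)$ established, Theorem~\ref{thm1} yields the critical couples of $(i)$, $(iii)$, $(v)$, $(vii)$, the nondecreasing-unbounded statements following from $\gamma(\mathcal{S}_{\mathcal{C}_A})=\gamma(\mathcal{C}_A)$ (resp. for $\mathcal{C}_A\cap\mathcal{C}_B$) and the last assertion of Theorem~\ref{thm1}.

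Finally I would extract the qualitative parts. For monotonicity and continuity in $c$ (parts $(ii)$, $(vi)$), the envelope identity gives, since $t(c,u)$ is a critical point of $\varphi_{c,\cdot}$, that $\partial_c\Lambda(c,u)=\partial_c\varphi_{c,u}(t(c,u))=-\alpha/A(t(c,u)u)<0$ on $\mathcal{C}_A$, so each $\widetilde{\Lambda}(c,\cdot)$, and hence each minimax $\lambda_{c,k}^\pm$ from \eqref{50}, is strictly decreasing in $c$; continuity comes from the $C^1$-dependence in $(H1)(a)$ with uniform estimates on compacta. The limit $\lim_{c\to0^-}\lambda_{c,k}^+=0$ follows from $t^+(c,u)\to0$ and $\Lambda^+(c,u)\to0$ as $c\to0^-$ uniformly on a fixed compact $M\in\F_k$, together with $\widetilde{\Lambda}^+\ge0$; the same scaling $v=t^+(c,u)u\to0$ drives the bifurcation in $(iv)$, where for fixed $\lambda>0$ and $k_n\to\infty$ part $(iii)$ makes the range of $c\mapsto\lambda_{c,k_n}^+$ eventually contain $\lambda$, the intermediate value theorem and strict monotonicity select $c_n\in(c^*,0)$ with $\lambda_{c_n,k_n}^+=\lambda$, and holding $\lambda$ fixed while $k_n\to\infty$ forces $c_n\to0^-$, whence $v_n\to0$. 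The ordering $\lambda_{c,k}^+<\lambda_{c,k}^-$ in $(v)$ follows from $t^+<t^-$ and thus $\varphi_{c,u}(t^+)<\varphi_{c,u}(t^-)$ on $\mathcal{C}_A\cap\mathcal{C}_B$, combined with the inclusion $\mathcal{C}_A\cap\mathcal{C}_B\subset\mathcal{C}_A$, which only enlarges the family competing in the infimum defining $\lambda_{c,k}^+$. The hard part, I expect, will be making everything uniform over the \emph{noncomplete, noncompact} manifolds $\mathcal{S}_{\mathcal{C}_A}$ and $\mathcal{S}_{\mathcal{C}_A\cap\mathcal{C}_B}$ — in particular pinning down $c^*,c^{**}$ and controlling the root $t^\pm(c,u)$ and the numerator of $\Lambda$ as $u$ approaches $\{A(u)=0\}$ — which is exactly what $(H2)(c)$ is designed to handle and what replaces the completeness exploited in \cite{MR4736027}.
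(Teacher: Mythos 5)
Your overall strategy coincides with the paper's: the same identification of $I_1,I_2$, the same two cones $\mathcal{C}_A$ and $\mathcal{C}_A\cap\mathcal{C}_B$, the same fibering-map analysis producing $c^*$ as the coalescence threshold (your $-\tfrac1\alpha\inf m(u)$ is exactly the paper's $\sup c(u)$ from \eqref{defc}) and $c^{**}$ as the positivity threshold for the $t^-$-branch, verification of $(H1)$--$(H2)$, and then Theorem \ref{thm1} together with an envelope/intermediate-value argument for the qualitative statements. There are, however, concrete gaps. The most serious is your claim that ``on the sphere $t^\pm$ stays in a compact subset of $(0,\infty)$'': this is false for $t^-$, since $t^-(c,u)$ exceeds the root $\bigl[(\eta-\alpha)N(u)/((\beta-\alpha)B(u))\bigr]^{1/(\beta-\eta)}$ and therefore blows up as $B(u)\to 0^+$ along $\mathcal{S}_{\mathcal{C}_A\cap\mathcal{C}_B}$, which is a regime perfectly compatible with $A(u_n)\to 0$. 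As written, your verification of $(H2)$-$(c)$ (and of the uniform positivity of the numerator for $0<c<c^{**}$) does not cover this case. The paper's Lemma \ref{topolopro}~ii) repairs it by splitting into ``$t^-(c,u_n)$ bounded'' (then $A(t^-u_n)\to0$ and one uses $\Lambda^-(c,u)\ge\varphi_{c,u}(t_0(u))=\alpha(c_0(u)-c)/A(t_0(u)u)\ge C/A(u)$, which is also what makes the threshold $c^{**}=\inf c_0$ uniform in $u$) and ``$t^-(c,u_n)\to\infty$'' (then the coercivity estimate \eqref{eqco} in $\|t^-u_n\|$ applies, the numerator growing like $(t^-)^\eta$ against a denominator of order $(t^-)^\alpha$). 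You need one of these devices.

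The second gap is the repeated passage from pointwise strict inequalities to strict inequalities between minimax values. Pointwise $\partial_c\widetilde{\Lambda}(c,u)<0$ only gives that $c\mapsto\lambda_{c,k}$ is nonincreasing; strict decrease and continuity require the two-sided bound $-C(b-a)\le\widetilde{\Lambda}(b,u)-\widetilde{\Lambda}(a,u)\le-C^{-1}(b-a)$ \emph{uniformly over the competitors}, which the paper obtains by first restricting the minimax to a fixed sublevel set $\widetilde{\Lambda}(b,\cdot)^T$ (Lemma \ref{Lemma 2}) and proving there that $A(t(c,u)u)$ is bounded above and away from zero (Proposition \ref{Proposition 1}). ``Uniform estimates on compacta'' does not suffice because the infimum runs over \emph{all} compact symmetric $M$ with $\gamma(M)\ge k$. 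Likewise, $\varphi_{c,u}(t^+)<\varphi_{c,u}(t^-)$ pointwise, combined with the inclusion of cones, yields only $\lambda^+_{c,k}\le\lambda^-_{c,k}$; the strict inequality in item v) is exactly the content of Proposition \ref{pdm}, which needs the quantitative estimates imported from \cite[Lemma 5.10]{MR4736027}. These gaps are repairable, but they occur precisely where the noncompactness and noncompleteness you flag at the end actually bite.
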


\begin{figure}[H]
	\centering
	\begin{tikzpicture}[>=latex,scale=1.2]
		\draw[->] (-1,0) -- (6,0) node[below] {\scalebox{0.8}{$\lambda$}};
		\foreach \x in {}
		\draw[shift={(\x,0)}] (0pt,2pt) -- (0pt,-2pt) node[below] {\footnotesize $\x$};
		\draw[->] (0,-2) -- (0,3) node[left] {\scalebox{0.8}{$c$}};
		\foreach \y in {}
		\draw[shift={(0,\y)}] (2pt,0pt) -- (-2pt,0pt) node[left] {\footnotesize $\y$};
		\draw[blue,thick] (1,1) .. controls (3,-1.4) .. (3,-1.4);
		\draw[blue,thick] (2,1) .. controls (4,-1.4) .. (4,-1.4);
		
		\draw[blue,thick] (3,1) .. controls (5,-1.4) .. (5,-1.4);
		\draw[red,thick] (0,0) .. controls (1,-.1) .. (2.5,-1.4);
		\draw[red,thick] (0,0) .. controls (1.5,-.1) .. (3.3,-1.4);
		\draw[red,thick] (0,0) .. controls (2,-0.1) .. (4.3,-1.4);
		\draw [thick] (1.8,-2) -- (1.8,3);
		\draw (1.8,-.18) node{\scalebox{0.8}{$\bullet$}};
		\draw (1.8,-.37) node{\scalebox{0.8}{$\bullet$}};
		\draw (1.8,-.82) node{\scalebox{0.8}{$\bullet$}};
				\draw (1,1) node[above]{\scalebox{0.8}{$\lambda_{c,1}^-$}} ; 	
				\draw  (2,1) node[above]{\scalebox{0.8}{$\lambda_{c,2}^-$}} ; 
		\draw  (3,1) node[above]{\scalebox{0.8}{$\lambda_{c,k}^-$}} ; 
		\draw  (1,-.5) node[below]{\scalebox{1.5}{$\nexists$}}  ; 	
				\draw  (2.5,-2) node[above]{\scalebox{0.8}{$\lambda_{c,1}^+$}} ; 	
				\draw (3.3,-2) node[above]{\scalebox{0.8}{$\lambda_{c,2}^+$}} ; 
				\draw  (4.3,-2) node[above]{\scalebox{0.8}{$\lambda_{c,k}^+$}} ; 
		\draw [thick] (-.1,-1.4) node[left]{\scalebox{0.8}{$c^*$}} -- (.1,-1.4); 
		\draw [thick] (-.1,1) node[left]{\scalebox{0.8}{$c^{**}$}} -- (.1,1); 
		\draw [thick,dashed] (0,-1.4) -- (6,-1.4);
		\draw [thick,dashed] (0,1) -- (6,1);
		\draw  (2.3,2.4) node[above]{\scalebox{0.8}{$\lambda=\overline{\lambda}$}} ;
	\end{tikzpicture}
\caption{Energy curves for Theorem \ref{thm2}. The red curves correspond to $(\lambda_{c,k}^+,c)$, $c\in (c^*,0)$ and the blue ones to $(\lambda_{c,k}^-,c)$, $c\in (c^*,c^{**})$. }\label{fig:CC}
\end{figure}
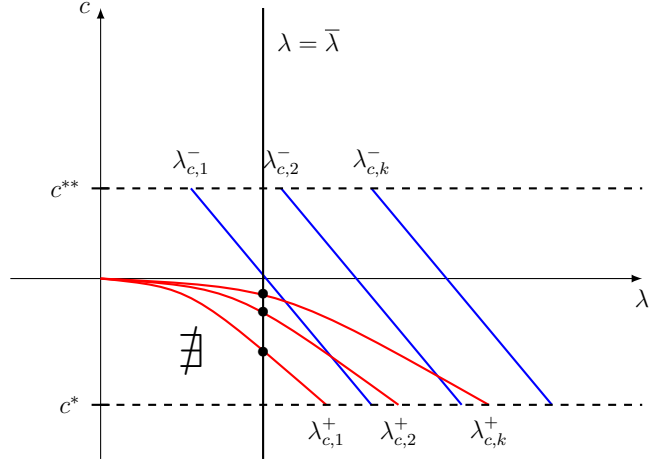

\begin{remark}
The value $\lambda_{c,1}^+$ in Theorem \ref{thm2} is, for any $c\in (c^*,0)$, the ground state level of the functional $u \mapsto \lambda(c,u)$ over $\mathcal{C}_A$. Indeed, we can write
$$\lambda_{c,1}^+=\inf_{u \in \mathcal{S}_{\mathcal{C}_A}} \widetilde{\Lambda}(c,u)=\inf_{u \in \mathcal{N}_c \cap {\mathcal{C}_A}} \lambda(c,u).$$
In addition, one may check that whenever $c<0$ any solution $(\lambda,u)$ of the {\it prescribed energy  problem} \eqref{PEP}  satisfies $\lambda A(u)>0$. It follows that for $c\in (c^*,0)$ the  problem \eqref{PEP}  has no solution with $0<\lambda<\lambda_{c,1}^+$.
\end{remark}

It is interesting to note here that our results concerning $\lambda_{c,k}^-$ are quite different from \cite[Theorem 5.14]{MR4736027}. The difference, as we shall see later, comes from the fact that condition $(H2)$ is not clear when $\widetilde{\Lambda}^-(c,\cdot)$ assumes negative levels. We note that the value $c^{**}$ appears to ensure that  $\widetilde{\Lambda}^-(c,\cdot)>0$ for $c<c^{**}$. In Sections \ref{abstractarguments} and \ref{Sfurther} we shall discuss more about this issue (see also Conjecture \ref{conje}).

 Theorem \ref{thm2} has the following counterpart, if we assume that the set 
 $$\mathcal{C}_{-A}:=\{u\in X:\ A(u)<0\}$$
  is large enough (i.e. $\gamma(\mathcal{C}_{-A})>1$).

\begin{theorem}\label{thm3} Suppose $(C1)$ - $(C3)$. Then there exist $\overline{c}^*<0<\overline{c}^{**}$ such that 
	\begin{enumerate}
	\item[i)] For any $1 \le k\le \gamma(\mathcal{C}_{-A})$ and $c\in (\overline{c}^*,0)$ there exist $\lambda_{c,k}^-<0$ and $v_{c,k}\in \mathcal{C}_{-A}$ such that 
	\begin{equation*}
		\Phi_{\lambda_{c,k}^-}(v_{c,k})=c\ \ \mbox{and}\ \ 	\Phi_{\lambda_{c,k}^-}'(v_{c,k})=0.
	\end{equation*}
\item[ii)] For each $1 \le k\le \gamma(\mathcal{C}_{-A})$ the map $c \mapsto \lambda_{c,k}^-$,  is continuous and increasing in $(\overline{c}^*,0)$, and satisfies $\displaystyle \lim_{c\to 0^-}\lambda_{c,k}^-=0$.
\item[iii)] If $\gamma(\mathcal{C}_{-A})=\infty$ then  $(\lambda_{c,k}^-)$ is a nonincreasing unbounded sequence, i.e.  $\lambda_{c,k}^- \ge \lambda_{c,k+1}^- \to -\infty$ as $k\to \infty$, for any $c\in (\overline{c}^*,0)$.
\item[iv)] If $\gamma(\mathcal{C}_{-A})=\infty$, then for any $\lambda<0$ there exist sequences $(v_n)\subset \mathcal{C}_{-A}$, $(c_n)\subset (\overline{c}^*,0)$ and $(k_n)\subset \mathbb{N}$ such that $c_n\to 0$, $k_n\to \infty$ and
\begin{equation*}
	\lambda=\lambda_{c_n,k_n}^-, \ \	\Phi_{\lambda_{c_n,k_n}^-}(v_n)=c_n\ \ \mbox{and}\ \ 	\Phi_{\lambda_{c_n,k_n}^-}'(v_n)=0, \quad \mbox{for every } n.
	\end{equation*}
Moreover $v_n\to 0$ in $X$, so $(\lambda,0)$ is a bifurcation point for any $\lambda<0$.
\item[v)] For any $1 \le k\le \gamma(\mathcal{C}_{-A}\cap \mathcal{C}_B)$ and $c\in (\overline{c}^*,\overline{c}^{**})$ there exist $\lambda_{c,k}^+<0$ and $u_{c,k}\in \mathcal{C}_{-A}\cap\mathcal{C}_B$ such that
\begin{equation*}
		\Phi_{\lambda_{c,k}^+}(u_{c,k})=c\ \ \mbox{and}\ \ 	\Phi_{\lambda_{c,k}^+}'(u_{c,k})=0.
	\end{equation*}
Moreover $\lambda_{c,k}^-<\lambda_{c,k}^+$ for all $c\in (\overline{c}^*,0)$.
\item[vi)] For any $1 \le k\le \gamma(\mathcal{C}_{-A}\cap \mathcal{C}_B)$ the map $c \mapsto \lambda_{c,k}^+$ is continuous and increasing in $(\overline{c}^*,\overline{c}^{**})$.
\item[vii)] If $\gamma(\mathcal{C}_{-A}\cap \mathcal{C}_B)=\infty$ then  $(\lambda_{c,k}^+)$ is a nonincreasing unbounded sequence, i.e.  $\lambda_{c,k}^+ \ge \lambda_{c,k+1}^+ \to -\infty$ as $k\to \infty$, for any $c\in (\overline{c}^*,\overline{c}^{**})$.
\end{enumerate}

\end{theorem}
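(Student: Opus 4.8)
The plan is to deduce Theorem~\ref{thm3} from Theorem~\ref{thm2} via the sign symmetry $A \leftrightarrow -A$, $\lambda \leftrightarrow -\lambda$, rather than by re-running the variational machinery. Set $\hat A := -A$ and consider the companion family $\hat\Phi_\mu(u) := \frac{1}{\eta}N(u) - \frac{\mu}{\alpha}\hat A(u) - \frac{1}{\beta}B(u)$. A one-line computation gives the functional identity $\hat\Phi_\mu = \Phi_{-\mu}$ for every $\mu \in \mathbb{R}$; consequently $(\mu,v)$ solves the prescribed energy problem for $\hat\Phi$ at level $c$ if and only if $(-\mu,v)$ solves it for $\Phi$ at the same level $c$. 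The whole proof rests on this correspondence.

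First I would verify that $\hat A$ inherits $(C1)$--$(C3)$. Homogeneity and the bound $|\hat A(u)| = |A(u)| \le C\|u\|^\alpha$ are immediate, since $N$ and $B$ are untouched, so $(C1)$--$(C2)$ hold; for $(C3)$ one rewrites any admissible sequence $(\mu_n,u_n)$ for $\hat\Phi$ as $(-\mu_n,u_n)$ for $\Phi$ through $\hat\Phi_{\mu_n}=\Phi_{-\mu_n}$ and applies $(C3)$ for $\Phi$. Moreover $\mathcal{C}_{\hat A} = \{u:\ -A(u)>0\} = \mathcal{C}_{-A}$, so the standing hypothesis $\gamma(\mathcal{C}_{-A})>1$ of Theorem~\ref{thm3} is exactly $\gamma(\mathcal{C}_{\hat A})>1$, and likewise $\mathcal{C}_{\hat A}\cap\mathcal{C}_B = \mathcal{C}_{-A}\cap\mathcal{C}_B$. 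Hence Theorem~\ref{thm2} applies verbatim to the family $\hat\Phi$, producing thresholds $\bar c^* < 0 < \bar c^{**}$ and positive critical values $\mu_{c,k}^{\pm}$ with solutions in $\mathcal{C}_{-A}$ and in $\mathcal{C}_{-A}\cap\mathcal{C}_B$, respectively.

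Translating back through $\lambda=-\mu$, each assertion of Theorem~\ref{thm2} for $\hat\Phi$ yields the corresponding assertion of Theorem~\ref{thm3} under systematic sign reversal. The positive values $\mu_{c,k}^{+},\mu_{c,k}^{-}$ become the negative values $\lambda_{c,k}^{\mp}:=-\mu_{c,k}^{\pm}$; the superscripts are interchanged because $\varphi_{c,u}^{\hat\Phi} = -\varphi_{c,u}^{\Phi}$, so the $t^+$ (minimizer) branch for $\hat\Phi$ is the $t^-$ (maximizer) branch for $\Phi$, and vice versa. The monotonicities $c\mapsto\mu_{c,k}^{\pm}$ decreasing with $\lim_{c\to 0^-}\mu_{c,k}^{+}=0$ become $c\mapsto\lambda_{c,k}^{\mp}$ increasing with $\lim_{c\to 0^-}\lambda_{c,k}^{\mp}=0$ (parts (ii),(vi)); the nondecreasing unbounded sequences $\mu_{c,k}^{\pm}\to+\infty$ become nonincreasing unbounded sequences $\lambda_{c,k}^{\mp}\to-\infty$ (parts (iii),(vii)); the order relation $\mu_{c,k}^{+}<\mu_{c,k}^{-}$ becomes the resulting order between $\lambda_{c,k}^{-}$ and $\lambda_{c,k}^{+}$, with the inequality reversed by the sign change (part (v)); and the bifurcation conclusion $v_n\to 0$ at any $\mu>0$ transfers to $v_n\to 0$ at any $\lambda<0$ (part (iv)).

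I expect no new analytical content: every hypothesis of Theorem~\ref{thm2} is preserved under $A\mapsto -A$, and the Palais--Smale and Krasnoselskii genus machinery is invoked only through that theorem. The sole point demanding care—and thus the only place I anticipate difficulty—is the bookkeeping: tracking the interchange of the $+/-$ branches induced by $\varphi_{c,u}^{\hat\Phi}=-\varphi_{c,u}^{\Phi}$, and correctly reversing each monotonicity and each inequality under $\lambda=-\mu$, so that the negative $\lambda$-curves of Figure~\ref{fig:CC} are mirrored about the $c$-axis into their left-half-plane counterparts.
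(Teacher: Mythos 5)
Your proposal is correct and essentially identical to the paper's own proof, which likewise observes $\Phi_{\lambda,A}=\Phi_{-\lambda,-A}$, applies Theorem \ref{thm2} to the functional with weight $-A$, and sets $\lambda_{c,k}^{-}=-\lambda_{c,k}^{+}(-A)$, $\lambda_{c,k}^{+}=-\lambda_{c,k}^{-}(-A)$. Your instinct that the only delicate point is the sign bookkeeping is well placed: negating $\mu_{c,k}^{+}<\mu_{c,k}^{-}$ yields $\lambda_{c,k}^{+}<\lambda_{c,k}^{-}$, which agrees with Figure \ref{fig:CC2} but not with the literal inequality printed in part $(v)$ of the statement, so that inequality appears to be a typo rather than a defect of your argument.
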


\begin{figure}[H]
	\centering
	\begin{tikzpicture}[>=latex,scale=1.2]
		\draw[->] (-6,0) -- (1,0) node[below] {\scalebox{0.8}{$\lambda$}};
		\draw[->] (0,-2) -- (0,3) node[left] {\scalebox{0.8}{$c$}};
		
		\draw[blue,thick] (-1,1) .. controls (-3,-1.4) .. (-3,-1.4);
		\draw[blue,thick] (-2,1) .. controls (-4,-1.4) .. (-4,-1.4);
		\draw[blue,thick] (-3,1) .. controls (-5,-1.4) .. (-5,-1.4);
		
		\draw[red,thick] (0,0) .. controls (-1,-.1) .. (-2.5,-1.4);
		\draw[red,thick] (0,0) .. controls (-1.5,-.1) .. (-3.3,-1.4);
		\draw[red,thick] (0,0) .. controls (-2,-0.1) .. (-4.3,-1.4);
		
		\draw [thick] (-1.8,-2) -- (-1.8,3);
		
		\draw (-1.8,-.18) node{\scalebox{0.8}{$\bullet$}};
		\draw (-1.8,-.37) node{\scalebox{0.8}{$\bullet$}};
		\draw (-1.8,-.82) node{\scalebox{0.8}{$\bullet$}};
		
		\draw  (-1,-.5) node[below]{\scalebox{1.5}{$\nexists$}}; 	
		
		\draw [thick] (-.1,-1.4) node[right]{\scalebox{0.8}{$\overline{c}^*$}} -- (.1,-1.4); 
		\draw [thick] (-.1,1) node[right]{\scalebox{0.8}{$\overline{c}^{**}$}} -- (.1,1); 
		\draw [thick,dashed] (0,-1.4) -- (-6,-1.4);
		\draw [thick,dashed] (0,1) -- (-6,1);
		
		\draw  (-2.3,2.4) node[above]{\scalebox{0.8}{$\lambda=\overline{\lambda}$}} ;
		
			\draw  (-1,1) node[above]{\scalebox{0.8}{$\lambda_{c,1}^+$}} ; 	
		\draw  (-2,1) node[above]{\scalebox{0.8}{$\lambda_{c,2}^+$}} ; 
		\draw   (-3,1) node[above]{\scalebox{0.8}{$\lambda_{c,k}^+$}} ; 
			\draw (-2.5,-2) node[above]{\scalebox{0.8}{$\lambda_{c,1}^-$}} ; 	
		\draw  (-3.3,-2) node[above]{\scalebox{0.8}{$\lambda_{c,2}^-$}} ; 
		\draw   (-4.3,-2) node[above]{\scalebox{0.8}{$\lambda_{c,k}^-$}} ; 
	\end{tikzpicture}
\caption{Energy curves for Theorem \ref{thm3}. Red curves correspond to $(\lambda_{c,k}^-,c)$, with $c\in (\overline{c}^*,0)$. Blue curves correspond to $(\lambda_{c,k}^+,c)$, with $c\in (\overline{c}^*,\overline{c}^{**})$. }\label{fig:CC2}
\end{figure}

\begin{remark}It is worth emphasizing that the functions $\lambda_{c,k}^+$ and $\lambda_{c,k}^-$ 
	appearing in Theorem \ref{thm2} also depend on the cone $\mathcal{C}_A$. 
	A more precise notation would therefore be 
	$\lambda_{c,k,\mathcal{C}_A}^+$ and $\lambda_{c,k,\mathcal{C}_A}^-$.
	For the sake of readability, however, we will avoid this heavier notation. 
	Furthermore, it follows that the functions 
	$\lambda_{c,k}^+$ and $\lambda_{c,k}^-$ in Theorem~\ref{thm3} 
	are entirely different from those in Theorem \ref{thm2}. 
	The similarity in notation is due solely to the fact that we are restricting the functional 
	to $\mathcal{N}_c^+$ or $\mathcal{N}_c^-$, which, strictly speaking, 
	should also depend on the cone. From now on, we will always assume 
	the reader is aware of this dependence.
	
\end{remark}

Let us additionally assume that $A$ and $B$ are related as follows:

\begin{enumerate}
	\item[(C4)] If $(u_n)\subset \mathcal{C}_A$ is a bounded sequence satisfying $A(u_n)\to 0$ then $B(u_n)\to 0$. 
\end{enumerate}

This condition allows us to take $c^{**}=\infty$, and to obtain a  result similar to \cite[Theorem 5.14]{MR4736027}:

\begin{theorem}\label{thm4} Suppose $(C1)$-$(C4)$. Then there exist $c^*<0$ such that: 
	\begin{enumerate}
		\item[i)] For any $1 \le k\le \gamma(\mathcal{C}_A)$ and $c\in (c^*,0)$ there exist $\lambda_{c,k}^+>0$ and $v_{c,k}\in \mathcal{C}_A$ such that 
		\begin{equation*}
			\Phi_{\lambda_{c,k}^+}(v_{c,k})=c\ \ \mbox{and}\ \ 	\Phi_{\lambda_{c,k}^+}'(v_{c,k})=0.
		\end{equation*}
\item[ii)] For any $1 \le k\le \gamma(\mathcal{C}_A)$ the map $c \mapsto \lambda_{c,k}^+$  is continuous and decreasing in $ (c^*,0)$, and satisfies $\displaystyle \lim_{c\to 0^-}\lambda_{c,k}=0$.
\item[iii)] If $\gamma(\mathcal{C}_A)=\infty$ then  $(\lambda_{c,k}^+)$ is a nondecreasing unbounded sequence, i.e. $0< \lambda_{c,k}^+ \le \lambda_{c,k+1}^+ \to \infty$ as $k\to \infty$, for any $c\in (c^*,0)$.
\item[iv)] If $\gamma(\mathcal{C}_A)=\infty$ then for each $\lambda>0$ there exist sequences $(v_n)\subset \mathcal{C}_A$, $(c_n)\subset (c^*,0)$ and $(k_n)\subset \mathbb{N}$ such that $c_n\to 0$, $k_n\to \infty$,  and
\begin{equation*}
\lambda=\lambda_{c_n,k_n}^+, \ \		\Phi_{\lambda_{c_n,k_n}^+}(v_n)=c_n\ \ \mbox{and}\ \ 	\Phi_{\lambda_{c_n,k_n}^+}'(v_n)=0, \quad \mbox{for every } n.
\end{equation*}
Moreover, $v_n\to 0$ in $X$, so $(\lambda,0)$ is a bifurcation point for any $\lambda>0$.
\item[v)] For any $1 \le k\le \gamma(\mathcal{C}_A\cap \mathcal{C}_B)$ and $c>c^*$ there exist $\lambda_{c,k}^-\in \mathbb{R}$ and $u_{c,k}\in \mathcal{C}_A\cap\mathcal{C}_B$ such that
\begin{equation*}
			\Phi_{\lambda_{c,k}^-}(u_{c,k})=c\ \ \mbox{and}\ \ 	\Phi_{\lambda_{c,k}^-}'(u_{c,k})=0.
		\end{equation*}
Moreover $\lambda_{c,k}^+<\lambda_{c,k}^-$ for all $c\in (c^*,0)$.
\item[vi)] For any $1 \le k\le \gamma(\mathcal{C}_A\cap \mathcal{C}_B)$ the map $c \mapsto \lambda_{c,k}^-$ is continuous and decreasing in $(c^*,\infty)$, and satisfies $\displaystyle \lim_{c\to \infty}\lambda_{c,k}^-=-\infty$.
\item[vii)] If $\gamma(\mathcal{C}_A\cap \mathcal{C}_B)=\infty$ then $(\lambda_{c,k}^-)$ is a nondecreasing unbounded sequence, , i.e. $0< \lambda_{c,k}^- \le \lambda_{c,k+1}^- \to \infty$ as $k\to \infty$, for any  $c\in (c^*,c^{**})$.
\item[viii)] If $\gamma(\mathcal{C}_A\cap \mathcal{C}_B)=\infty$, then for each $\lambda\in\mathbb{R}$ there exist sequences $(u_n)\subset \mathcal{C}_A\cap \mathcal{C}_B$, $(c_n) \subset (c^*,\infty)$ and $(k_n) \subset \mathbb{N}$ such that $c_n\to \infty$, $k_n\to \infty$,  and
\begin{equation*}
\lambda=\lambda_{c_n,k_n}^-, \ \	\Phi_{\lambda_{c_n,k_n}^-}(u_n)=c_n\ \ \mbox{and}\ \ 	\Phi_{\lambda_{c_n,k_n}^-}'(u_n)=0, \quad \mbox{for every } n.
\end{equation*}
Moreover $\|u_n\|\to \infty$, so $(\lambda,\infty)$ is a bifurcation point for any $\lambda\in\mathbb{R}$.
\end{enumerate}

\end{theorem}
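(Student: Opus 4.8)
The plan is to read $\Phi_\lambda$ in \eqref{dephi} as the abstract functional $I_1-\lambda I_2$ with $I_1=\frac1\eta N-\frac1\beta B$ and $I_2=\frac1\alpha A$, and to apply Theorem~\ref{thm1} twice: with the cone $\mathcal{C}=\mathcal{C}_A$ on the local-minimum branch $t^+$ (yielding (i)--(iv)), and with $\mathcal{C}=\mathcal{C}_A\cap\mathcal{C}_B$ on the local-maximum branch $t^-$ (yielding (v)--(viii)). Items (i)--(iv) coincide verbatim with Theorem~\ref{thm2}(i)--(iv) and use only (C1)--(C3), so I would simply invoke Theorem~\ref{thm2} for them. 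The new content, and the sole reason for adding (C4), is that the $t^-$-branch can now be treated for \emph{every} $c>c^*$ instead of only on the bounded window $(c^*,c^{**})$ of Theorem~\ref{thm2}; this is what gives $c^{**}=\infty$ and the bifurcation-from-infinity statement (viii). I would take the final $c^*$ to be the larger of the two branch thresholds.

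First I would check (H1) for the $t^-$-branch. Writing $n=N(u),a=A(u),b=B(u)$ for $u\in\mathcal{C}_A\cap\mathcal{C}_B$ (so $n,a,b>0$), (C1) gives
\[
\varphi_{c,u}(t)=\frac{\alpha}{a}\Big(\tfrac{n}{\eta}t^{\eta-\alpha}-\tfrac{b}{\beta}t^{\beta-\alpha}-c\,t^{-\alpha}\Big),
\]
so $(c,u,t)\mapsto\varphi_{c,u}'(t)$ is manifestly $C^1$, and $\varphi_{c,u}'(t)=0$ is equivalent to $h(t):=\frac{\eta-\alpha}{\eta}n\,t^{\eta}-\frac{\beta-\alpha}{\beta}b\,t^{\beta}+c\alpha=0$. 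Since $\beta>\eta$, $h$ rises from $h(0)=c\alpha$ to a unique interior maximum and then decays to $-\infty$, so its largest root $t^-(c,u)$ — a nondegenerate local maximum of $\varphi_{c,u}$ — exists exactly when the maximal value of $h$ is positive. A short computation writes that value as $c\alpha+P(u)$ with $P$ independent of $c$ and $0$-homogeneous, and (C2) bounds $P$ below by a positive constant on the sphere; hence $h$'s maximum is $\ge\alpha(c-c^*)>0$ for all $u$ once $c>c^*:=-\inf_uP(u)/\alpha<0$. This gives (H1)(b) and makes $\mathcal{N}_c^-$ a $C^1$-Finsler manifold for every $c>c^*$.

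The hard part is (H2), and this is exactly where (C4) is decisive. Substituting $h(t^-)=0$ into $\varphi_{c,u}(t^-)$ produces the envelope formula
\[
\widetilde{\Lambda}^-(c,u)=\frac{\alpha}{a}\Big(\tfrac{n(\beta-\eta)}{\eta(\beta-\alpha)}(t^-)^{\eta-\alpha}-\tfrac{c\beta}{\beta-\alpha}(t^-)^{-\alpha}\Big),
\]
from which the remaining properties follow. The delicate point is (H2)(c): I must show $\widetilde{\Lambda}^-(c,u_n)\to+\infty$ whenever $A(u_n)\to0$ on the sphere. Observe that $t^-$ depends only on $n,b,c$, not on $a$; so if $a_n\to0$ while $b_n$ stayed bounded below, the bracket above could be negative (precisely the negative-level regime occurring for $c>0$) and $\frac{\alpha}{a_n}\cdot(\text{negative})\to-\infty$, wrecking (H2)(c). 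This is the obstruction that forced a finite $c^{**}$ in Theorem~\ref{thm2}. Condition (C4) removes it: $A(u_n)\to0$ forces $b_n=B(u_n)\to0$, so the interior maximizer of $h$ and hence $t^-$ tend to $\infty$, the bracket tends to $+\infty$, and together with $\frac{\alpha}{a_n}\to+\infty$ we get $\widetilde{\Lambda}^-(c,u_n)\to+\infty$ for all $c>c^*$. The same formula also gives $\widetilde{\Lambda}^-\to+\infty$ when $B(u_n)\to0$ with $A$ bounded below (then $t^-\to\infty$ directly), so $\widetilde{\Lambda}^-$ is coercive toward the entire boundary of the cone; this yields (H2)(a). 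For (H2)(b) I would take a Palais--Smale sequence at a finite level, use (H2)(c) to keep $A,B$ bounded away from $0$ (so the sequence stays in a complete part of the non-complete manifold), lift it to the cone via the diffeomorphism $u\mapsto t^-(c,u)u$, and invoke (C3) to extract a convergent subsequence.

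With (H1) and (H2) verified, Theorem~\ref{thm1} delivers (v) and (vii), while $\lambda_{c,k}^+<\lambda_{c,k}^-$ on $(c^*,0)$ follows from $t^+<t^-$ and $\varphi_{c,u}(t^+)<\varphi_{c,u}(t^-)$ as in Theorem~\ref{thm2}(v). For (vi) I would use that $\lambda(c,u)$ is strictly decreasing in $c$ on $\mathcal{C}_A$ (where $I_2>0$), that the envelope inherits this monotonicity since $t^-$ is a critical point, and that the asymptotics $t^-\sim c^{1/\beta}$ as $c\to\infty$ make the second bracket term dominate, giving $\widetilde{\Lambda}^-(c,u)\sim-\text{const}\cdot c^{(\beta-\alpha)/\beta}\to-\infty$; continuity, monotonicity and $\lim_{c\to\infty}\lambda_{c,k}^-=-\infty$ for the minimax values follow. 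Finally, for (viii) I would fix $\lambda\in\mathbb{R}$: by (vii) $\lambda_{c_0,k}^-\to+\infty$ as $k\to\infty$ for any fixed $c_0$, so for all large $k$ the continuous decreasing map $c\mapsto\lambda_{c,k}^-$ from (vi), whose range is unbounded below, attains $\lambda$ at some $c_n=c(k)$; monotonicity in both variables forces $c_n\to\infty$, and then $\Phi_\lambda(u_n)=c_n\to\infty$ together with the bounds (C2) forces $\|u_n\|\to\infty$, exhibiting $(\lambda,\infty)$ as a bifurcation point. The principal difficulty throughout is the boundary analysis of $\widetilde{\Lambda}^-$ at negative levels, resolved exactly by (C4).
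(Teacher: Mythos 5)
Your proposal is correct and follows essentially the same route as the paper: the paper's proof of Theorem \ref{thm4} itself is a two-line reduction to Theorem \ref{thm2} plus Theorem \ref{thmcurves-}, with the real content in Section \ref{abstractarguments} (Lemma \ref{topolopro}, Remark \ref{rmkcomplete}, Propositions \ref{neharimanifolds} and \ref{H2}) and Section \ref{curvesbeha}, and you reconstruct exactly that content, using $(C4)$ in the same decisive place (forcing $B(u_n)\to 0$, hence $t^-\to\infty$ and coercivity of $\widetilde{\Lambda}^-$ for every $c>c^*$). The only points you compress relative to the paper are the strictness of $\lambda_{c,k}^+<\lambda_{c,k}^-$ (Proposition \ref{pdm}, which the paper also defers to an external argument) and the uniform two-sided bound on $A(t(c,u)u)$ over sublevel sets needed for continuity and strict monotonicity of $c\mapsto\lambda_{c,k}^-$ (Proposition \ref{Proposition 1}); both are consistent with the paper's own level of detail.
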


\begin{figure}[H]
	\centering
	\begin{tikzpicture}[>=latex,scale=1.2]
		\draw[->] (-1,0) -- (6,0) node[below] {\scalebox{0.8}{$\lambda$}};
		\foreach \x in {}
		\draw[shift={(\x,0)}] (0pt,2pt) -- (0pt,-2pt) node[below] {\footnotesize $\x$};
		\draw[->] (0,-2) -- (0,3) node[left] {\scalebox{0.8}{$c$}};
		\foreach \y in {}
		\draw[shift={(0,\y)}] (2pt,0pt) -- (-2pt,0pt) node[left] {\footnotesize $\y$};
		\draw[blue,thick] (-2.5,1.2) .. controls (0,1) and (1,1) .. (2,0);
		\draw[blue,thick] (2,0) .. controls (3,-1.4) .. (3,-1.4);
		\draw[blue,thick] (-2.5,1.7) .. controls (0,1.5) and (1.5,1.5) .. (3,0);
		\draw[blue,thick] (3,0) .. controls (4,-1.4) .. (4,-1.4);
		
		\draw [thick] (3.3,0) node[above]{$\cdots$} -- (3.3,0); 
		\draw[blue,thick] (-2.5,2.2) .. controls (0,2) and (2,2) .. (4,0);
		\draw[blue,thick] (4,0) .. controls (5,-1.4) .. (5,-1.4);
		\draw[red,thick] (0,0) .. controls (1,-.1) .. (2.5,-1.4);
		\draw[red,thick] (0,0) .. controls (1.5,-.1) .. (3.3,-1.4);
		\draw[red,thick] (0,0) .. controls (2,-0.1) .. (4.3,-1.4);
		\draw [thick] (3.2,-1.3) node[above]{$\vdots$} -- (3.2,-1.3); 
		\draw [thick] (-.1,1.85) node[right]{$\vdots$} -- (-.1,1.85);
		\draw [thick] (1.8,-2) -- (1.8,3);
		\draw (1.8,.18) node{\scalebox{0.8}{$\bullet$}};
		\draw (1.8,.91) node{\scalebox{0.8}{$\bullet$}};
		\draw (1.8,1.50) node{\scalebox{0.8}{$\bullet$}};
		\draw (1.8,-.18) node{\scalebox{0.8}{$\bullet$}};
		\draw (1.8,-.37) node{\scalebox{0.8}{$\bullet$}};
		\draw (1.8,-.82) node{\scalebox{0.8}{$\bullet$}};
		\draw  (-2.8,1) node[above]{\scalebox{0.8}{$\lambda_{c,1}^-$}} ; 	
		\draw  (-2.8,1.5) node[above]{\scalebox{0.8}{$\lambda_{c,2}^-$}} ; 
		\draw  (-2.8,2) node[above]{\scalebox{0.8}{$\lambda_{c,k}^-$}} ; 
		\draw  (1,-.5) node[below]{\scalebox{1.5}{$\nexists$}}  ; 	
		\draw  (2.5,-2) node[above]{\scalebox{0.8}{$\lambda_{c,1}^+$}} ; 	
		\draw  (3.3,-2) node[above]{\scalebox{0.8}{$\lambda_{c,2}^+$}} ; 
		\draw  (4.3,-2) node[above]{\scalebox{0.8}{$\lambda_{c,k}^+$}} ; 
		\draw [thick] (-.1,-1.4) node[left]{\scalebox{0.8}{$c^*$}} -- (.1,-1.4); 
		\draw [thick,dashed] (0,-1.4) -- (6,-1.4);
		\draw  (2.3,2.4) node[above]{\scalebox{0.8}{$\lambda=\overline{\lambda}$}} ;
	\end{tikzpicture}
\caption{Energy curves for Theorem \ref{thm4}. Red curves corresponds to $(\lambda_{c,k}^+,c)$, $c\in (c^*,0)$ and blue curves are $(\lambda_{c,k}^-,c)$, $c\in (c^*,\infty)$. }\label{fig:CC3}
\end{figure}


We also have the following counterpart of Theorem \ref{thm4}, which is obtained by considering $\mathcal{C}_{-A}$ instead of $\mathcal{C}_{A}$ :
\begin{theorem}\label{thm5} Suppose $(C1)$ - $(C4)$. Then there exists $\overline{c}^*<0$ such that:
		\begin{enumerate}
		\item[i)] For any $1 \le k\le \gamma(\mathcal{C}_{-A})$ and $c\in (\overline{c}^*,0)$ there exist $\lambda_{c,k}^-<0$ and $v_{c,k}\in \mathcal{C}_{-A}$ such that 
		\begin{equation*}
		\Phi_{\lambda_{c,k}^-}(v_{c,k})=c\ \ \mbox{and}\ \ 	\Phi_{\lambda_{c,k}^-}'(v_{c,k})=0.
		\end{equation*}
		\item[ii)] For any $1 \le k\le \gamma(\mathcal{C}_{-A})$ the map $c \mapsto \lambda_{c,k}^-$,  is continuous and increasing in $(\overline{c}^*,0)$, and satisfies $\displaystyle \lim_{c\to 0^-}\lambda_{c,k}^-=0$.
		\item[iii)] If $\gamma(\mathcal{C}_{-A})=\infty$ then  $\displaystyle \lim_{k\to \infty}\lambda_{c,k}^-=-\infty$ for each $c\in (\overline{c}^*,0)$.
		\item[iv)] If $\gamma(\mathcal{C}_{-A})=\infty$, then for each $\lambda<0$ there exist sequences $(v_n)\subset \mathcal{C}_{-A}$, $(c_n)\subset (\overline{c}^*,0)$ and $(k_n)\subset \mathbb{N}$ such that $c_n\to 0$, $k_n\to \infty$ and
		\begin{equation*}
		\lambda=\lambda_{c_n,k_n}^-, \ \	\Phi_{\lambda_{c_n,k_n}^-}(v_n)=c_n\ \ \mbox{and}\ \ 	\Phi_{\lambda_{c_n,k_n}^-}'(v_n)=0, \quad \mbox{for every } n.
		\end{equation*}
		Moreover $v_n\to 0$ in $X$, so $(\lambda,0)$ is a bifurcation point for any $\lambda<0$.
\item[v)] For any $1 \le k\le \gamma(\mathcal{C}_{-A}\cap \mathcal{C}_B)$ and $c> \overline{c}^*$ there exist $\lambda_{c,k}^+\in \mathbb{R}$ and $u_{c,k}\in \mathcal{C}_{-A}\cap\mathcal{C}_B$ such that
\begin{equation*}
			\Phi_{\lambda_{c,k}^+}(u_{c,k})=c\ \ \mbox{and}\ \ 	\Phi_{\lambda_{c,k}^+}'(u_{c,k})=0.
		\end{equation*}
Moreover $\lambda_{c,k}^+<\lambda_{c,k}^-$ for any $c\in (\overline{c}^*,0)$.
\item[vi)] For any $1 \le k\le \gamma(\mathcal{C}_{-A}\cap \mathcal{C}_B)$ the map $c \mapsto \lambda_{c,k}^+$ is continuous and increasing  in $(\overline{c}^*,\infty)$, and satisfies $\displaystyle \lim_{c\to \infty}\lambda_{c,k}^+=\infty$.
\item[vii)] If $\gamma(\mathcal{C}_{-A}\cap \mathcal{C}_B)=\infty$ then  $\displaystyle \lim_{k\to \infty}\lambda_{c,k}^+=-\infty$ for any $c>\overline{c}^*$.
\item[viii)] If $\gamma(\mathcal{C}_{-A}\cap \mathcal{C}_B)=\infty$ then for any $\lambda\in\mathbb{R}$ there exist sequences $( u_n)\subset \mathcal{C}_{-A}\cap \mathcal{C}_B$, $(c_n)\subset (c^*,\infty)$ and $(k_n)\subset \mathbb{N}$ such that $c_n\to \infty$, $k_n\to \infty$,  and
\begin{equation*}
\lambda=\lambda_{c_n,k_n}^+, \ \		\Phi_{\lambda_{c_n,k_n}^+}(u_n)=c_n\ \ \mbox{and}\ \ 	\Phi_{\lambda_{c_n,k_n}^+}'(u_n)=0, \quad \mbox{for every } n.
\end{equation*}
Moreover $\|u_n\|\to \infty$, so $(\lambda,\infty)$ is a bifurcation point for any $\lambda\in\mathbb{R}$.
\end{enumerate}

\end{theorem}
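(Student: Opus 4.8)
The plan is to obtain Theorem~\ref{thm5} as a formal image of Theorem~\ref{thm4} under the symmetry $A\mapsto -A$, $\lambda\mapsto-\lambda$, so that no fresh application of the abstract Theorem~\ref{thm1} is required. Set $\widetilde A:=-A$ and introduce
\begin{equation*}
\widetilde\Phi_\mu(u):=\frac{1}{\eta}N(u)-\frac{\mu}{\alpha}\widetilde A(u)-\frac{1}{\beta}B(u),\qquad u\in X,\ \mu\in\mathbb{R}.
\end{equation*}
Since $\widetilde A=-A$ we have the identity $\Phi_\lambda=\widetilde\Phi_{-\lambda}$ for every $\lambda\in\mathbb{R}$, together with $\mathcal{C}_{\widetilde A}=\{u:\widetilde A(u)>0\}=\mathcal{C}_{-A}$ and $\mathcal{C}_{\widetilde A}\cap\mathcal{C}_B=\mathcal{C}_{-A}\cap\mathcal{C}_B$. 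Hence finding couples solving \eqref{PEP} for $\Phi_\lambda$ on $\mathcal{C}_{-A}$ (resp.\ $\mathcal{C}_{-A}\cap\mathcal{C}_B$) is equivalent, via $\mu=-\lambda$, to finding couples solving \eqref{PEP} for $\widetilde\Phi_\mu$ on $\mathcal{C}_{\widetilde A}$ (resp.\ $\mathcal{C}_{\widetilde A}\cap\mathcal{C}_B$).

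First I would check that the triple $(N,\widetilde A,B)$ satisfies $(C1)$--$(C4)$, so that Theorem~\ref{thm4} is applicable to $\widetilde\Phi_\mu$. Conditions $(C1)$ and $(C2)$ are immediate, since $\widetilde A$ is again even and $\alpha$-homogeneous with $|\widetilde A(u)|=|A(u)|\le C\|u\|^\alpha$. Condition $(C3)$ is literally unchanged, because the families $\{\widetilde\Phi_\mu\}$ and $\{\Phi_\lambda\}$ coincide: a bounded sequence $(\mu_n)$ corresponds to the bounded sequence $(\lambda_n)=(-\mu_n)$, with $\widetilde\Phi_{\mu_n}=\Phi_{\lambda_n}$ and the same derivatives. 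The only point deserving attention is $(C4)$: for $\widetilde\Phi$ it reads ``$(u_n)\subset\mathcal{C}_{\widetilde A}$ bounded with $\widetilde A(u_n)\to0$ implies $B(u_n)\to0$''; as $\mathcal{C}_{\widetilde A}=\mathcal{C}_{-A}$ and $\widetilde A(u_n)\to0\Leftrightarrow A(u_n)\to0$, this is exactly $(C4)$ with $\mathcal{C}_A$ replaced by $\mathcal{C}_{-A}$, which is precisely the form of the hypothesis relative to the cone under consideration here (this is the meaning of ``considering $\mathcal{C}_{-A}$ instead of $\mathcal{C}_A$'').

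Granting the above, I would apply Theorem~\ref{thm4} to $\widetilde\Phi_\mu$, obtaining a threshold, renamed $\overline c^*<0$, levels $\widetilde\lambda_{c,k}^+>0$ on $\mathcal{C}_{\widetilde A}$ (items (i)--(iv) of Theorem~\ref{thm4}) and $\widetilde\lambda_{c,k}^-\in\mathbb{R}$ on $\mathcal{C}_{\widetilde A}\cap\mathcal{C}_B$ (items (v)--(viii)), together with the corresponding solutions; the genus conditions transfer verbatim, $\gamma(\mathcal{C}_{\widetilde A})=\gamma(\mathcal{C}_{-A})$ and $\gamma(\mathcal{C}_{\widetilde A}\cap\mathcal{C}_B)=\gamma(\mathcal{C}_{-A}\cap\mathcal{C}_B)$. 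I then translate everything back through $\lambda=-\mu$, setting $\lambda_{c,k}^-:=-\widetilde\lambda_{c,k}^+$ and $\lambda_{c,k}^+:=-\widetilde\lambda_{c,k}^-$. The interchange of the $+$ and $-$ labels is forced by the identity $\widetilde\varphi_{c,u}=\widetilde\lambda(c,tu)=-\lambda(c,tu)=-\varphi_{c,u}$: a local minimizer of $\widetilde\varphi_{c,u}$ (the $+$ Nehari branch for $\widetilde\Phi$) is a local maximizer of $\varphi_{c,u}$ (the $-$ branch for $\Phi$), so $\widetilde{\mathcal N}_c^+=\mathcal N_c^-$ and $\widetilde{\mathcal N}_c^-=\mathcal N_c^+$. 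Under $\lambda=-\mu$ every sign and order relation reverses: $\widetilde\lambda^+>0$ becomes $\lambda^-<0$; ``decreasing in $c$'' becomes ``increasing in $c$'' while the limit $\lim_{c\to0^-}=0$ is preserved; ``nondecreasing, unbounded to $+\infty$ in $k$'' becomes ``nonincreasing, unbounded to $-\infty$''; $\widetilde\lambda_{c,k}^+<\widetilde\lambda_{c,k}^-$ becomes $\lambda_{c,k}^+<\lambda_{c,k}^-$; $\lim_{c\to\infty}\widetilde\lambda^-=-\infty$ becomes $\lim_{c\to\infty}\lambda^+=+\infty$; and the bifurcation statements at $(\mu,0)$ with $\mu>0$ and at $(\mu,\infty)$ with $\mu\in\mathbb{R}$ become bifurcation at $(\lambda,0)$ with $\lambda<0$ and at $(\lambda,\infty)$ with $\lambda\in\mathbb{R}$, with $v_n\to0$ and $\|u_n\|\to\infty$ unchanged. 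Matching these against items (i)--(viii) yields Theorem~\ref{thm5}.

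The main obstacle I anticipate is purely organizational: one must carry the exchange $\widetilde{\mathcal N}_c^\pm=\mathcal N_c^\mp$ and the reversal of every inequality consistently across all eight items, and must be sure to invoke $(C4)$ for the correct cone $\mathcal{C}_{-A}$. No new analytic input is needed, since each quantitative assertion of Theorem~\ref{thm5} is the exact image under $\mu=-\lambda$ of an assertion already proved in Theorem~\ref{thm4}.
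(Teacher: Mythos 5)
Your proposal is correct and is essentially the paper's own argument: the paper proves Theorem \ref{thm5} by noting $\Phi_{\lambda,A}=\Phi_{-\lambda,-A}$ and applying Theorem \ref{thm4} to the functional with $A$ replaced by $-A$, then setting $\lambda_{c,k}^-=-\lambda_{c,k}^+(-A)$ and $\lambda_{c,k}^+=-\lambda_{c,k}^-(-A)$, exactly as you do. Your more detailed bookkeeping (the swap $\widetilde{\mathcal N}_c^\pm=\mathcal N_c^\mp$, the verification of $(C1)$--$(C4)$ for $(N,-A,B)$ with $(C4)$ read relative to the cone $\mathcal{C}_{-A}$, and the item-by-item sign reversal) simply makes explicit what the paper leaves to the reader.
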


\begin{figure}[h]
	\centering
	\begin{tikzpicture}[>=latex,scale=1.2]
		\draw[->] (-6,0) -- (1,0) node[below] {\scalebox{0.8}{$\lambda$}};
		\foreach \x in {}
		\draw[shift={(\x,0)}] (0pt,2pt) -- (0pt,-2pt) node[below] {\footnotesize $\x$};
		\draw[->] (0,-2) -- (0,3) node[left] {\scalebox{0.8}{$c$}};
		\foreach \y in {}
		\draw[shift={(0,\y)}] (2pt,0pt) -- (-2pt,0pt) node[left] {\footnotesize $\y$};
		
		\draw[blue,thick] (2.5,1.2) .. controls (0,1) and (-1,1) .. (-2,0);
		\draw[blue,thick] (-2,0) .. controls (-3,-1.4) .. (-3,-1.4);
		
		\draw[blue,thick] (2.5,1.7) .. controls (0,1.5) and (-1.5,1.5) .. (-3,0);
		\draw[blue,thick] (-3,0) .. controls (-4,-1.4) .. (-4,-1.4);
		
		\draw[blue,thick] (2.5,2.2) .. controls (0,2) and (-2,2) .. (-4,0);
		\draw[blue,thick] (-4,0) .. controls (-5,-1.4) .. (-5,-1.4);
		
		\draw[red,thick] (0,0) .. controls (-1,-.1) .. (-2.5,-1.4);
		\draw[red,thick] (0,0) .. controls (-1.5,-.1) .. (-3.3,-1.4);
		\draw[red,thick] (0,0) .. controls (-2,-0.1) .. (-4.3,-1.4);
		
		\draw [thick] (-3.3,0) node[above]{$\cdots$} -- (-3.3,0); 
		\draw [thick] (-3.2,-1.3) node[above]{$\vdots$} -- (-3.2,-1.3); 
		\draw [thick] (.1,1.85) node[left]{$\vdots$} -- (.1,1.85);
		
		\draw [thick] (-1.8,-2) -- (-1.8,3);
		\draw (-1.8,.18) node{\scalebox{0.8}{$\bullet$}};
		\draw (-1.8,.91) node{\scalebox{0.8}{$\bullet$}};
		\draw (-1.8,1.50) node{\scalebox{0.8}{$\bullet$}};
		\draw (-1.8,-.18) node{\scalebox{0.8}{$\bullet$}};
		\draw (-1.8,-.37) node{\scalebox{0.8}{$\bullet$}};
		\draw (-1.8,-.82) node{\scalebox{0.8}{$\bullet$}};
		
		\draw  (2.8,1) node[above]{\scalebox{0.8}{$\lambda_{c,1}^+$}} ; 	
		\draw  (2.8,1.5) node[above]{\scalebox{0.8}{$\lambda_{c,2}^+$}} ; 
		\draw  (2.8,2) node[above]{\scalebox{0.8}{$\lambda_{c,k}^+$}} ; 
		\draw  (-1,-.5) node[below]{\scalebox{1.5}{$\nexists$}}  ; 	
		\draw  (-2.5,-2) node[above]{\scalebox{0.8}{$\lambda_{c,1}^-$}} ; 	
		\draw  (-3.3,-2) node[above]{\scalebox{0.8}{$\lambda_{c,2}^-$}} ; 
		\draw  (-4.3,-2) node[above]{\scalebox{0.8}{$\lambda_{c,k}^-$}} ; 
		
		\draw [thick] (.1,-1.4) node[right]{\scalebox{0.8}{$\overline{c}^*$}} -- (-.1,-1.4); 
		\draw [thick,dashed] (0,-1.4) -- (-6,-1.4);
		\draw  (-2.3,2.4) node[above]{\scalebox{0.8}{$\lambda=\overline{\lambda}$}} ;
	\end{tikzpicture}
\caption{Energy curves for Theorem \ref{thm5}. Red curves corresponds to $(\lambda_{c,k}^-,c)$, $c\in (c^*,0)$ and blue curves are $(\lambda_{c,k}^+,c)$, $c\in (c^*,\infty)$. }\label{fig:CC4}
\end{figure}

Let us conclude by observing that Theorems \ref{thm2} and \ref{thm3} (as well as Theorem \ref{thm4} and \ref{thm5}) both apply if, for instance, $\gamma(\mathcal{C}_{A}\cap \mathcal{C}_B)>1$ and $\gamma(\mathcal{C}_{-A}\cap \mathcal{C}_B)>1$. In particular, this happens if $\gamma(\mathcal{C}_{A}\cap \mathcal{C}_B)=\gamma(\mathcal{C}_{-A}\cap \mathcal{C}_B)=\infty$, and in such case a superposition of Figures \ref{fig:CC} and \ref{fig:CC2}, or Figures \ref{fig:CC3} and \ref{fig:CC4} would describe our results.
\subsection{Applications}

In our first application we consider the problem
\begin{equation}
	\label{quasi}
	\begin{cases}
		-\Delta_p u=\lambda a(x)|u|^{\alpha-2}u +b(x)|u|^{\beta-2}u &\mbox{ in } \Omega, \\
		u=0 &\mbox{ on } \partial \Omega,
	\end{cases}
\end{equation}
where $1<\alpha <p<\beta<p^*$ and $\Omega$ is a bounded domain of $\mathbb{R}^N$. We assume, for simplicity, that $a,b\in L^{\infty}(\Omega)$. The corresponding functional is given by \eqref{dephi} with
\begin{equation*}
	N(u)=\int_\Omega |\nabla u|^pdx,\ \ \ A(u)=\int_\Omega a(x)|u|^\alpha dx,\ \ \ B(u)=\int_\Omega b(x)|u|^\beta dx,\mbox{ for } u\in X=W_0^{1,p}(\Omega).
\end{equation*}
 Note that $\eta=p$. We set
\begin{equation*}
	\mathcal{A}^{+}:=\{x \in \Omega:\ a(x)>0 \}, 
\end{equation*}
i.e. $\mathcal{A}^{+}$ is the largest open subset of $\Omega$ where $a>0$ a.e. In a similar way we set
\begin{equation*}
 \mathcal{A}^{-}:=\{x\in \Omega:\ a(x)<0 \},\quad \mbox{and} \quad \mathcal{B}^{+}:=\{x\in \Omega:\ b(x)>0 \}.
\end{equation*}
We also set $ \mathcal{A}^{0}:= \Omega \setminus (\mathcal{A}^{+} \cup \mathcal{A}^{-})$.
Our main result on \eqref{quasi} reads as follows:

\begin{theorem}\label{thmapp1} Suppose that $\mathcal{A}^+\cap \mathcal{B}^+ \neq \emptyset$.  Then there exist $c^*<0<c^{**}$ such that:
	\begin{enumerate}
	\item[i)] For any $c\in (c^*,0)$ there exist sequences $(\lambda_{c,k}^+) \subset \mathbb{R}$ and $(v_{c,k})\subset \mathcal{C}_A$   such that 
	\begin{equation*}
		\Phi_{\lambda_{c,k}^+}(v_{c,k})=c\ \ \mbox{and}\ \ 	\Phi_{\lambda_{c,k}^+}'(v_{c,k})=0,
	\end{equation*}
	i.e. $v_{c,k}$ is a weak solution of \eqref{quasi} with $\lambda=\lambda_{c,k}^+$, for any $k \in \mathbb{N}$.
Moreover:
\begin{enumerate}
\item For any $c\in (c^*,0)$ the sequence $(\lambda_{c,k}^+)$ is positive, nondecreasing and unbounded, i.e. $0< \lambda_{c,k}^+ \le \lambda_{c,k+1}^+ \to \infty$ as $k\to \infty$, 
\item  For any $k \in \mathbb{N}$ the map $c \mapsto \lambda_{c,k}^+$ is continuous and decreasing in $(c^*,0)$, and $\displaystyle \lim_{c\to 0^-}\lambda_{c,k}^+=0$.
\item For any $\lambda>0$ there exist sequences $(v_n)\subset \mathcal{C}_A$, $(c_n)\subset (c^*,0)$ and $(k_n)\subset \mathbb{N}$ such that $c_n\to 0$, $k_n\to \infty$ and \begin{equation*}
\lambda=\lambda_{c_n,k_n}^+, \ \		\Phi_{\lambda_{c_n,k_n}^+}(v_n)=c_n\ \ \mbox{and}\ \ 	\Phi_{\lambda_{c_n,k_n}^+}'(v_n)=0, \quad \mbox{for every } n.
\end{equation*}
Furthermore $v_n\to 0$ in $X$, so $(\lambda,0)$ is a bifurcation point for any $\lambda>0$.
\end{enumerate}

\item[ii)] For any $c\in (c^*,c^{**})$ there exist sequences $(\lambda_{c,k}^-) \subset \mathbb{R}$ and $(u_{c,k})\subset \mathcal{C}_A\cap\mathcal{C}_B$   such that 
\begin{equation*}
\Phi_{\lambda_{c,k}^-}(u_{c,k})=c\ \ \mbox{and}\ \ 	\Phi_{\lambda_{c,k}^-}'(u_{c,k})=0,
\end{equation*}
i.e. $u_{c,k}$ is a weak solution of \eqref{quasi} with $\lambda=\lambda_{c,k}^-$, for any $k \in \mathbb{N}$.
Moreover:
\begin{enumerate}
	\item For any $c\in (c^*,0)$ the sequence $(\lambda_{c,k}^-)$ is positive, nondecreasing and unbounded, i.e. $0< \lambda_{c,k}^- \le \lambda_{c,k+1}^- \to \infty$ as $k\to \infty$, and $\lambda_{c,k}^+<\lambda_{c,k}^-$ for every $k \in \mathbb{N}$.
	\item  For any $k \in \mathbb{N}$ the map $c \mapsto \lambda_{c,k}^-$ is continuous and decreasing in $(c^*,c^{**})$.
	\item If $a \geq 0$ and $\mathcal{A}^0 \subset \mathcal{B}^0$ then the previous assertions hold with $c^{**}=\infty$. In addition, for any $\lambda>0$ there exist sequences $(u_n)\subset \mathcal{C}_A$, $(c_n)\subset (c^*,0)$ and $(k_n)\subset \mathbb{N}$ such that $c_n\to 0$, $k_n\to \infty$ and \begin{equation*}
	\lambda=\lambda_{c_n,k_n}^-, \ \		\Phi_{\lambda_{c_n,k_n}^-}(u_n)=c_n\ \ \mbox{and}\ \ 	\Phi_{\lambda_{c_n,k_n}^-}'(u_n)=0, \quad \mbox{for every } n.
	\end{equation*}
	Furthermore $\|u_n\|\to \infty$, so $(\lambda,\infty)$ is a bifurcation point for any $\lambda\in\mathbb{R}$.
\end{enumerate}

\end{enumerate}
\end{theorem}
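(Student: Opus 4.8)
The plan is to show that the concrete functional $\Phi_\lambda$ associated to \eqref{quasi} fits the abstract framework: I would verify $(C1)$--$(C3)$ (and $(C4)$ under the extra hypotheses of ii)(c)), compute the relevant genera, and then read off the conclusions from Theorems \ref{thm2} and \ref{thm4}. The easy verifications come first. Condition $(C1)$ is immediate, since $N$, $A$, $B$ are respectively $p$-, $\alpha$- and $\beta$-homogeneous with $\eta=p$. For $(C2)$, choosing the norm $\|u\|=\big(\int_\Omega|\nabla u|^p\big)^{1/p}$ gives $N(u)=\|u\|^p$, so the two-sided bound holds with $C=C'=1$, while $|A(u)|\le\|a\|_\infty\int_\Omega|u|^\alpha\le C\|u\|^\alpha$ and $|B(u)|\le\|b\|_\infty\int_\Omega|u|^\beta\le C\|u\|^\beta$ follow from the continuous embeddings $W_0^{1,p}(\Omega)\hookrightarrow L^\alpha(\Omega),L^\beta(\Omega)$, valid since $\alpha,\beta<p^*$.

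The substantive analytic point is $(C3)$. Given bounded sequences $(\lambda_n)\subset\R$ and $(u_n)\subset X$ with $(\Phi_{\lambda_n}(u_n))$ bounded and $\Phi_{\lambda_n}'(u_n)\to 0$, reflexivity of $X$ yields $u_n\wto u$ along a subsequence, and Rellich--Kondrachov (using $\alpha,\beta<p^*$) gives $u_n\to u$ strongly in $L^\alpha$ and $L^\beta$. Since $a,b\in L^\infty$ and $(\lambda_n)$ is bounded, the lower-order terms $\langle A'(u_n),u_n-u\rangle$ and $\langle B'(u_n),u_n-u\rangle$ tend to $0$; testing $\Phi_{\lambda_n}'(u_n)\to 0$ against $u_n-u$ then forces $\langle -\Delta_p u_n,\,u_n-u\rangle\to 0$, and the $(S_+)$ property of $-\Delta_p$ on $W_0^{1,p}(\Omega)$ upgrades weak to strong convergence $u_n\to u$, which is $(C3)$.

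Next I would compute the genera. As $\mathcal{A}^+\cap\mathcal{B}^+$ is a nonempty open subset of $\Omega$, the space $W_0^{1,p}(\mathcal{A}^+\cap\mathcal{B}^+)$, extended by zero, is an infinite-dimensional subspace of $X$ on which every nonzero $u$ satisfies $A(u)=\int_{\mathcal{A}^+\cap\mathcal{B}^+}a|u|^\alpha>0$ and $B(u)=\int_{\mathcal{A}^+\cap\mathcal{B}^+}b|u|^\beta>0$, hence lies in $\mathcal{C}_A\cap\mathcal{C}_B$. Since the unit sphere of an infinite-dimensional space has infinite Krasnoselskii genus, monotonicity of the genus gives $\gamma(\mathcal{C}_A\cap\mathcal{C}_B)=\gamma(\mathcal{C}_A)=\infty$. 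Parts i) and ii) with finite $c^{**}$ then follow by applying Theorem \ref{thm2} with these infinite genera, which directly delivers the nondecreasing unbounded sequences $(\lambda_{c,k}^\pm)$, the inequality $\lambda_{c,k}^+<\lambda_{c,k}^-$, the monotonicity and continuity in $c$, the limits as $c\to 0^-$, and the bifurcation-from-zero statement; the identification that $\Phi'_{\lambda}(u)=0$ means exactly that $u$ is a weak solution of \eqref{quasi} is routine. For ii)(c) I would verify $(C4)$ under $a\ge 0$ and $\mathcal{A}^0\subset\mathcal{B}^0$: if $(u_n)\subset\mathcal{C}_A$ is bounded with $A(u_n)\to 0$, passing to a subsequence gives $u_n\wto u$ with $u_n\to u$ in $L^\alpha,L^\beta$, so $A(u)=\int_\Omega a|u|^\alpha=0$, and $a\ge 0$ forces $u=0$ a.e. on $\mathcal{A}^+$; since $\mathcal{A}^0\subset\mathcal{B}^0$ reads $\{b\neq 0\}\subset\mathcal{A}^+$ (because $a\ge 0$ gives $\mathcal{A}^0=\{a=0\}$), we get $B(u)=\int_{\{b\neq 0\}}b|u|^\beta=0$, whence $B(u_n)\to B(u)=0$, and a standard subsequence argument removes the passage to a subsequence. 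With $(C1)$--$(C4)$ and the infinite genera, Theorem \ref{thm4} yields ii) with $c^{**}=\infty$ and the bifurcation-from-infinity statement.

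I expect the main obstacle to be the verification of $(C3)$ with a \emph{varying} parameter $\lambda_n$: one must control the lower-order terms uniformly in $n$ (which boundedness of $(\lambda_n)$ together with $a,b\in L^\infty$ provides) before invoking the $(S_+)$ property of $-\Delta_p$. The remaining ingredients are either immediate homogeneity and embedding estimates or the infinite-genus argument, which is driven entirely by the single geometric hypothesis $\mathcal{A}^+\cap\mathcal{B}^+\neq\emptyset$.
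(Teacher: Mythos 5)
Your proposal is correct and follows essentially the same route as the paper: verify $(C1)$--$(C3)$ (which the paper dismisses as clear), obtain $\gamma(\mathcal{C}_A)=\gamma(\mathcal{C}_A\cap\mathcal{C}_B)=\infty$ from the infinite-dimensional subspace $W_0^{1,p}(U)$ supported in $\mathcal{A}^+\cap\mathcal{B}^+$, apply Theorem \ref{thm2}, and then check $(C4)$ under $a\ge 0$ and $\mathcal{A}^0\subset\mathcal{B}^0$ to invoke Theorem \ref{thm4} for ii)(c). Your write-up merely supplies more detail (the $(S_+)$ argument for $(C3)$, the explicit passage $B(u_n)\to B(u)=0$); the only loose phrase is that the genus computation should use unit spheres of finite-dimensional subspaces of $W_0^{1,p}(U)$, since $\F_k$ requires compact sets, but this is immediate.
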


 Theorem \ref{thmapp1} has the following counterpart when dealing with $\mathcal{A}^- \cap \mathcal{B}^+$:
 
\begin{theorem}\label{thmapp1.1} Suppose that  $\mathcal{A}^-\cap \mathcal{B}^+ \neq \emptyset$. Then there exist $\overline{c}^*<0<\overline{c}^{**}$ such that:
	\begin{enumerate}
		\item[i)] For any $c\in (\overline{c}^*,0)$ there exist sequences $(\mu_{c,k}^-) \subset \mathbb{R}$ and $(v_{c,k})\subset \mathcal{C}_{-A}$   such that 
		\begin{equation*}
		\Phi_{\mu_{c,k}^-}(v_{c,k})=c\ \ \mbox{and}\ \ 	\Phi_{\mu_{c,k}^-}'(v_{c,k})=0,
		\end{equation*}
		i.e. $v_{c,k}$ is a weak solution of \eqref{quasi} with $\lambda=\mu_{c,k}^-$, for any $k \in \mathbb{N}$.
		Moreover:
		\begin{enumerate}
			\item For any $c\in (\overline{c}^*,0)$ the sequence $(\mu_{c,k}^-)$ is negative, nonincreasing and unbounded, i.e. $0> \mu_{c,k}^- \ge \mu_{c,k+1}^- \to -\infty$ as $k\to \infty$. 
			\item  For any $k \in \mathbb{N}$ the map $c \mapsto \mu_{c,k}^-$ is continuous and increasing in $(\overline{c}^*,0)$, and $\displaystyle \lim_{c\to 0^-}\mu_{c,k}^-=0$.
			\item For any $\lambda<0$ there exist sequences $(v_n)\subset \mathcal{C}_A$, $(c_n)\subset (c^*,0)$ and $(k_n)\subset \mathbb{N}$ such that $c_n\to 0$, $k_n\to \infty$ and \begin{equation*}
			\lambda=\mu_{c_n,k_n}^-, \ \		\Phi_{\mu_{c_n,k_n}^-}(v_n)=c_n\ \ \mbox{and}\ \ 	\Phi_{\mu_{c_n,k_n}^-}'(v_n)=0, \quad \mbox{for every } n.
			\end{equation*}
			Furthermore $v_n\to 0$ in $X$, so $(\lambda,0)$ is a bifurcation point for any $\lambda<0$.
		\end{enumerate}
		
\item[ii)] For any $c\in (\overline{c}^*,\overline{c}^{**})$ there exist sequences $(\mu_{c,k}^+) \subset \mathbb{R}$ and $(u_{c,k})\subset \mathcal{C}_{-A}\cap\mathcal{C}_B$   such that 
\begin{equation*}
\Phi_{\mu_{c,k}^+}(u_{c,k})=c\ \ \mbox{and}\ \ 	\Phi_{\mu_{c,k}^+}'(u_{c,k})=0,
\end{equation*}
i.e. $u_{c,k}$ is a weak solution of \eqref{quasi} with $\lambda=\mu_{c,k}^+$, for any $k \in \mathbb{N}$.
Moreover:
\begin{enumerate}
	\item For any $c\in (\overline{c}^*,\overline{c}^{**})$ the sequence $(\mu_{c,k}^+)$ is negative, nonincreasing and unbounded, i.e. $0> \mu_{c,k}^+ \ge \mu_{c,k+1}^+ \to -\infty$ as $k\to \infty$, and $\mu_{c,k}^+>\mu_{c,k}^-$ for every $k \in \mathbb{N}$.
	\item  For any $k \in \mathbb{N}$ the map $c \mapsto \mu_{c,k}^+$ is continuous and increasing in $(\overline{c}^*,\overline{c}^{**})$.
\end{enumerate}
\end{enumerate}
\end{theorem}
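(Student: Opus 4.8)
The plan is to obtain Theorem~\ref{thmapp1.1} as a direct application of the abstract Theorem~\ref{thm3} to the functional $\Phi_\lambda$ associated with \eqref{quasi}. Taking $X = W_0^{1,p}(\Omega)$ with $\|u\| = (\int_\Omega |\nabla u|^p\,dx)^{1/p}$ and $N,A,B$ as defined above, the three functionals are even and, respectively, $p$-, $\alpha$- and $\beta$-homogeneous, so $(C1)$ holds with $\eta = p$. Condition $(C2)$ is then immediate: $N(u) = \|u\|^p$ yields the two-sided bound with $C = C' = 1$, while $|A(u)| \le \|a\|_\infty \int_\Omega |u|^\alpha\,dx \le C\|u\|^\alpha$ and $|B(u)| \le \|b\|_\infty \int_\Omega |u|^\beta\,dx \le C\|u\|^\beta$ follow from the Sobolev embeddings $W_0^{1,p}(\Omega) \hookrightarrow L^\alpha(\Omega)$ and $L^\beta(\Omega)$, which are continuous because $1 < \alpha < p < \beta < p^*$.

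For $(C3)$ I would run the standard subcritical compactness argument, keeping track of the varying parameter. Given bounded sequences $(\lambda_n)$ and $(u_n)$ with $(\Phi_{\lambda_n}(u_n))$ bounded and $\Phi'_{\lambda_n}(u_n) \to 0$, pass to a subsequence with $u_n \wto u$ in $X$. Since $\alpha, \beta < p^*$, the embeddings into $L^\alpha$ and $L^\beta$ are compact, hence the lower-order terms $\lambda_n a|u_n|^{\alpha-2}u_n + b|u_n|^{\beta-2}u_n$ converge strongly in $X^*$ (here boundedness of $(\lambda_n)$ is used). Testing $\Phi'_{\lambda_n}(u_n) \to 0$ against $u_n - u$ then forces $\langle -\Delta_p u_n, u_n - u\rangle \to 0$, and the $(S_+)$ property of the $p$-Laplacian gives $u_n \to u$ strongly in $X$, establishing $(C3)$.

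The decisive step, where the hypothesis $\mathcal{A}^- \cap \mathcal{B}^+ \neq \emptyset$ is used, is the genus estimate. Since $\mathcal{A}^-$ and $\mathcal{B}^+$ are open, their intersection contains a ball $B_0$ on which $a < 0$ and $b > 0$ a.e. Extending by zero, any nonzero $u \in W_0^{1,p}(B_0)$ satisfies $A(u) = \int_{B_0} a|u|^\alpha\,dx < 0$ and $B(u) = \int_{B_0} b|u|^\beta\,dx > 0$, so $W_0^{1,p}(B_0)\setminus\{0\} \subset \mathcal{C}_{-A} \cap \mathcal{C}_B$. As $W_0^{1,p}(B_0)$ is infinite-dimensional, for each $k$ I can choose a $k$-dimensional symmetric subspace whose unit sphere has Krasnoselskii genus $k$ and embeds, after normalization, into $\mathcal{S}_{\mathcal{C}_{-A}\cap\mathcal{C}_B}$; therefore $\gamma(\mathcal{C}_{-A}\cap\mathcal{C}_B) = \infty$ and, a fortiori, $\gamma(\mathcal{C}_{-A}) = \infty$.

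With $(C1)$--$(C3)$ in force and both genera infinite, the index $k$ ranges over all of $\mathbb{N}$, so I would invoke Theorem~\ref{thm3} and relabel its conclusions into the full sequences claimed: the levels $\lambda_{c,k}^-$ produced on $\mathcal{C}_{-A}$ become $(\mu_{c,k}^-)$ of part (i), whose negativity and bifurcation statement come from Theorem~\ref{thm3}(i),(iv), whose monotonic unboundedness to $-\infty$ comes from (iii), and whose continuity and limit in $c$ come from (ii); the levels $\lambda_{c,k}^+$ on $\mathcal{C}_{-A}\cap\mathcal{C}_B$ become $(\mu_{c,k}^+)$ in part (ii), with (ii)(a) coming from Theorem~\ref{thm3}(vii) together with the separation $\lambda_{c,k}^- < \lambda_{c,k}^+$ in (v), and (ii)(b) from (vi). Each critical point delivered by the abstract theorem is by definition a weak solution of \eqref{quasi} at the corresponding parameter, and $\overline{c}^*, \overline{c}^{**}$ are inherited from Theorem~\ref{thm3}. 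The only genuinely analytic input is the compactness condition $(C3)$—so the hard part is ensuring the $(S_+)$ argument goes through uniformly as $\lambda_n$ varies—while the remaining work is the genus computation above and the bookkeeping of translating the abstract output into the present notation.
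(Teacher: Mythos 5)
Your proposal is correct and follows essentially the same route as the paper: verify $(C1)$--$(C3)$ for the $p$-Laplacian functional, use the open set $\mathcal{A}^-\cap\mathcal{B}^+$ to embed an infinite-dimensional subspace $W_0^{1,p}(B_0)\setminus\{0\}$ into $\mathcal{C}_{-A}\cap\mathcal{C}_B$ so that $\gamma(\mathcal{C}_{-A}\cap\mathcal{C}_B)=\infty$, and then invoke Theorem~\ref{thm3}. The paper's own proof is just this in compressed form (deferring $(C1)$--$(C3)$ and the genus computation to the proof of Theorem~\ref{thmapp1}), so your added detail on the $(S_+)$ argument for $(C3)$ only makes explicit what the paper asserts.
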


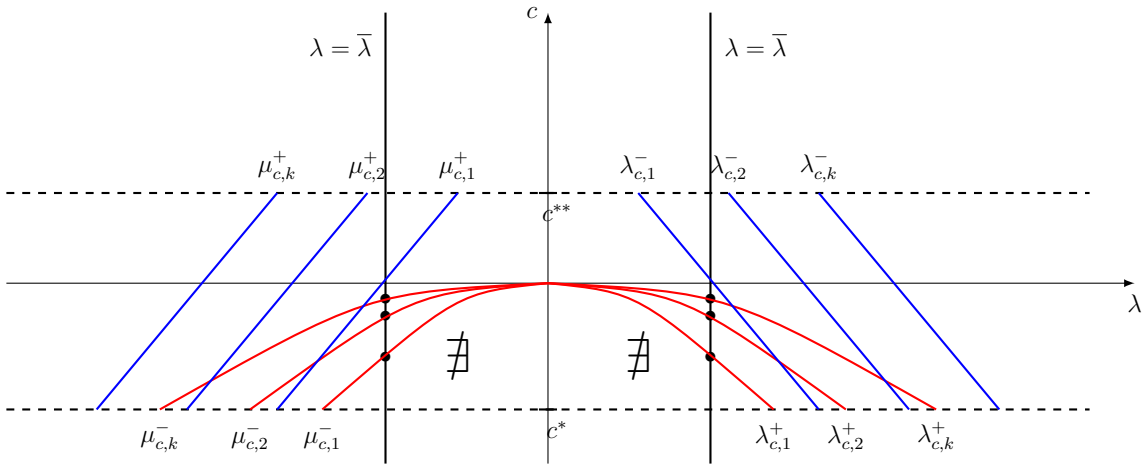
\begin{figure}[H]
	\centering
	\begin{tikzpicture}[>=latex,scale=1.2]
		\draw[->] (-6,0) -- (6.5,0) node[below] {\scalebox{0.8}{$\lambda$}};
		\draw[->] (0,-2) -- (0,3) node[left] {\scalebox{0.8}{$c$}};
		
		\draw [thick,dashed] (-6,-1.4) -- (6,-1.4);
		\draw [thick,dashed] (-6,1) -- (6,1);
		\draw [thick] (-.1,-1.4) -- (.1,-1.4) node[right,below]{\scalebox{0.8}{$c^*$}};
		\draw [thick] (-.1,1) -- (.1,1) node[right,below]{\scalebox{0.8}{$c^{**}$}};
		
		\draw [thick] (1.8,-2) -- (1.8,3);
		\draw [thick] (-1.8,-2) -- (-1.8,3);
		
		\foreach \y in {-0.18,-0.37,-0.82} {
			\draw (1.8,\y) node{\scalebox{0.8}{$\bullet$}};
			\draw (-1.8,\y) node{\scalebox{0.8}{$\bullet$}};
		}
		
		\draw (2.3,2.4) node[above]{\scalebox{0.8}{$\lambda=\overline{\lambda}$}};
		\draw (-2.3,2.4) node[above]{\scalebox{0.8}{$\lambda=\overline{\lambda}$}};
		
		\draw[red,thick] (0,0) .. controls (1,-.1) .. (2.5,-1.4);
		\draw[red,thick] (0,0) .. controls (1.5,-.1) .. (3.3,-1.4);
		\draw[red,thick] (0,0) .. controls (2,-0.1) .. (4.3,-1.4);
		
		\draw[red,thick] (0,0) .. controls (-1,-.1) .. (-2.5,-1.4);
		\draw[red,thick] (0,0) .. controls (-1.5,-.1) .. (-3.3,-1.4);
		\draw[red,thick] (0,0) .. controls (-2,-0.1) .. (-4.3,-1.4);
		
		\draw[blue,thick] (1,1) .. controls (3,-1.4) .. (3,-1.4);
		\draw[blue,thick] (2,1) .. controls (4,-1.4) .. (4,-1.4);
		\draw[blue,thick] (3,1) .. controls (5,-1.4) .. (5,-1.4);
		
		\draw[blue,thick] (-1,1) .. controls (-3,-1.4) .. (-3,-1.4);
		\draw[blue,thick] (-2,1) .. controls (-4,-1.4) .. (-4,-1.4);
		\draw[blue,thick] (-3,1) .. controls (-5,-1.4) .. (-5,-1.4);
		
		\draw (1,1) node[above]{\scalebox{0.8}{$\lambda_{c,1}^-$}};
		\draw (2,1) node[above]{\scalebox{0.8}{$\lambda_{c,2}^-$}};
		\draw (3,1) node[above]{\scalebox{0.8}{$\lambda_{c,k}^-$}};
		\draw (2.5,-2) node[above]{\scalebox{0.8}{$\lambda_{c,1}^+$}};
		\draw (3.3,-2) node[above]{\scalebox{0.8}{$\lambda_{c,2}^+$}};
		\draw (4.3,-2) node[above]{\scalebox{0.8}{$\lambda_{c,k}^+$}};
		
		\draw (-1,1) node[above]{\scalebox{0.8}{$\mu_{c,1}^+$}};
		\draw (-2,1) node[above]{\scalebox{0.8}{$\mu_{c,2}^+$}};
		\draw (-3,1) node[above]{\scalebox{0.8}{$\mu_{c,k}^+$}};
		\draw (-2.5,-2) node[above]{\scalebox{0.8}{$\mu_{c,1}^-$}};
		\draw (-3.3,-2) node[above]{\scalebox{0.8}{$\mu_{c,2}^-$}};
		\draw (-4.3,-2) node[above]{\scalebox{0.8}{$\mu_{c,k}^-$}};
		
		\draw (1,-.5) node[below]{\scalebox{1.5}{$\nexists$}};
		\draw (-1,-.5) node[below]{\scalebox{1.5}{$\nexists$}};
	\end{tikzpicture}
\caption{Energy curves from Theorems \ref{thmapp1} and \ref{thmapp1.1} under the conditions $\mathcal{A}^+\cap \mathcal{B}^+ \neq \emptyset$ and $\mathcal{A}^-\cap \mathcal{B}^+ \neq \emptyset$. We assume here that $c^*=\overline{c}^*$ and $c^{**}=\overline{c}^{**}$ for the sake of simplicity. }
\label{fig:combinedCC}
\end{figure}


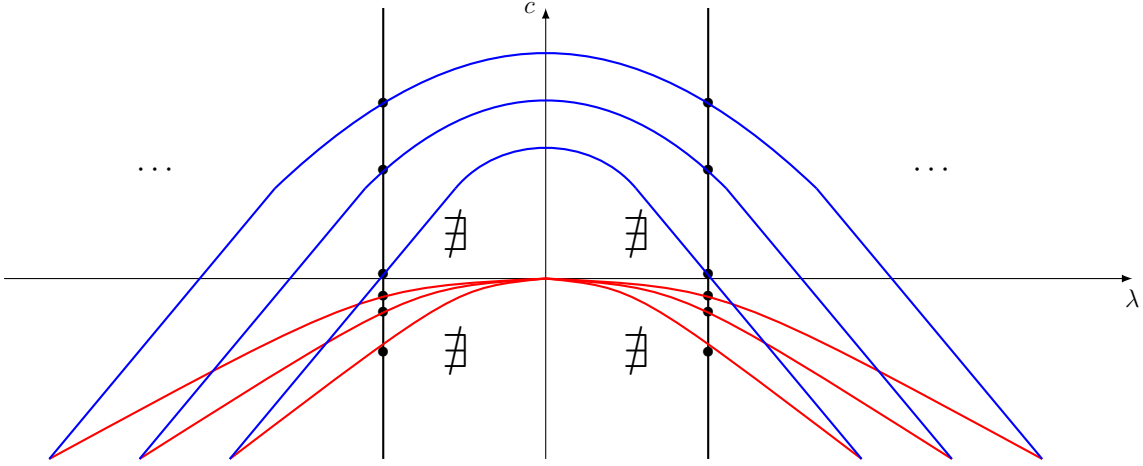
\begin{figure}[H]
	\centering
	\begin{tikzpicture}[>=latex,scale=1.2]
		\draw[->] (-6,0) -- (6.5,0) node[below] {\scalebox{0.8}{$\lambda$}};
		\draw[->] (0,-2) -- (0,3) node[left] {\scalebox{0.8}{$c$}};
		
		
		\draw [thick] (1.8,-2) -- (1.8,3);
		\draw [thick] (-1.8,-2) -- (-1.8,3);
		
		\foreach \y in {-0.20,-0.37,-0.82,.05,1.2,1.94} {
			\draw (1.8,\y) node{\scalebox{0.8}{$\bullet$}};
			\draw (-1.8,\y) node{\scalebox{0.8}{$\bullet$}};
		}
		
		\draw (4.3,1) node[above]{\scalebox{1}{$\cdots$}};
		\draw (-4.3,1) node[above]{\scalebox{1}{$\cdots$}};
		
		\draw[red,thick] (0,0) .. controls (1,-.1) .. (3.5,-2);
		\draw[red,thick] (0,0) .. controls (1.5,-.1) .. (4.5,-2);
		\draw[red,thick] (0,0) .. controls (2,-0.1) .. (5.5,-2);
		
		\draw[red,thick] (0,0) .. controls (-1,-.1) .. (-3.5,-2);
		\draw[red,thick] (0,0) .. controls (-1.5,-.1) .. (-4.5,-2);
		\draw[red,thick] (0,0) .. controls (-2,-0.1) .. (-5.5,-2);
		
		\draw[blue,thick] (1,1) .. controls (3,-1.4) .. (3.5,-2);
		\draw[blue,thick] (2,1) .. controls (4,-1.4) .. (4.5,-2);
		\draw[blue,thick] (3,1) .. controls (5,-1.4) .. (5.5,-2);
		
		\draw[blue,thick] (-1,1) .. controls (-0.5,1.6) and (0.5,1.6) .. (1,1);
		\draw[blue,thick] (-2,1) .. controls (-0.7,2.3) and (0.7,2.3) .. (2,1);
		\draw[blue,thick] (-3,1) .. controls (-0.9,3) and (0.9,3) .. (3,1);
		
		\draw[blue,thick] (-1,1) .. controls (-3,-1.4) .. (-3.5,-2);
		\draw[blue,thick] (-2,1) .. controls (-4,-1.4) .. (-4.5,-2);
		\draw[blue,thick] (-3,1) .. controls (-5,-1.4) .. (-5.5,-2);
		
		
		
		\draw (1,-.5) node[below]{\scalebox{1.5}{$\nexists$}};
		\draw (-1,-.5) node[below]{\scalebox{1.5}{$\nexists$}};
			\draw (1,.8) node[below]{\scalebox{1.5}{$\nexists$}};
		\draw (-1,.8) node[below]{\scalebox{1.5}{$\nexists$}};

	\end{tikzpicture}
\caption{Possible complete energy curves diagram to Theorem \ref{thmapp1.1}}
\label{fig:combinedCC1}
\end{figure}

\begin{conjecture}\label{conje} We believe that the curves in Figure \ref{fig:combinedCC} can be joined to produce a figure similar to Figure \ref{fig:combinedCC1}. In this case we would have, for any $\lambda\in \mathbb{R}$, the existence of two sequences of solutions (one with negative energy, the other one with positive energy) and, as a consequence, bifurcation from both $0$ and $\infty$ would occur. See Section ... for further discussion.
	
\end{conjecture}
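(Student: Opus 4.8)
The plan rests on recognising what the arcs of Figure~\ref{fig:combinedCC1} are made of. A point $u$ of the wall $\{A=0\}$ has $\Phi_\lambda(u)=\tfrac1\eta N(u)-\tfrac1\beta B(u)$ independently of $\lambda$, and $\Phi_0=I_1$, so a wall point at which two branches could meet is exactly a free critical point of $I_1$ with $A(u)=0$, that is, a solution of the omitted problem \eqref{pred}. For such a solution the Nehari identity $N(u)=B(u)$ yields $I_1(u)=(\tfrac1\eta-\tfrac1\beta)N(u)$, so a sequence of wall solutions with energies $c_k\to\infty$ must have $N(u_k)\to\infty$ and hence, by $(C2)$, $\|u_k\|\to\infty$. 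This is the geometric engine of the conjecture: the $k$-th arc should peak at the $k$-th wall solution $u_k$, the peak energy $c_k$ should tend to $\infty$ with $k$, and the escape $\|u_k\|\to\infty$ is precisely the bifurcation from $\infty$. The statement thus decomposes into three tasks: producing the wall solutions $u_k$, extending the $-$ branches up to them, and gluing the $\lambda>0$ and $\lambda<0$ families at each $u_k$.

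For the extension I would recast compactness through $(C3)$: a Palais--Smale sequence for $\widetilde{\Lambda}^-(c,\cdot)$ at a level $\mu$ lifts to $u_n=t^-(c,w_n)w_n$ with $\Phi_{\lambda_n}(u_n)=c$, $\lambda_n:=\widetilde{\Lambda}^-(c,w_n)\to\mu$ and $\Phi_{\lambda_n}'(u_n)\to0$, and $(C3)$ returns a convergent subsequence as soon as $(\lambda_n,u_n)$ is bounded. Hence $(H2)$-$(b)$,$(c)$ can fail only through (i) a minimax sequence sliding into the bottomless well $\widetilde{\Lambda}^-\to-\infty$ that forms near $\{A=0\}$ once $c>c^{**}$ (there $\lambda_n\to-\infty$ while $\|u_n\|$ stays bounded); or (ii) the genuine blow-up $\|u_n\|\to\infty$, which the index forces only as $c\to\infty$. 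The point is that for $c$ below the peak $c_k$ the value $\lambda_{c,k}^-$ is still strictly positive even though $\widetilde{\Lambda}^-$ is no longer positive near the wall, so I would replace the global genus $\inf$--$\sup$ (which invokes $(H2)$-$(c)$ everywhere) by a deformation on sublevel sets truncated away from a neighbourhood of $\{A=0\}$, together with an energy-barrier estimate showing that a Palais--Smale flow issuing from a positive level cannot enter that neighbourhood. This should keep $\lambda_{c,k}^-$ critical and render $c\mapsto\lambda_{c,k}^-$ continuous and decreasing on the whole interval $(c^*,c_k)$, with $\lambda_{c,k}^-\searrow0$ and $u_{c,k}\to u_k$ as $c\nearrow c_k$; in other words the ceiling $c^{**}$ is raised all the way to $c_k$.

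The gluing I would perform on the solution set $\mathcal{Z}:=\{(\lambda,u)\in\mathbb{R}\times\mathcal{C}_B:\Phi_\lambda'(u)=0\}$, on which $c=\Phi_\lambda(u)$ is smooth and the wall $\{A=0\}$ is merely a singularity of the parametrisation $(c,u)\mapsto\lambda(c,u)$, not of $\mathcal{Z}$ itself. At a wall solution $u_k$ I would run a local bifurcation/continuation analysis for $\Phi_\lambda'(u)=0$: if the branch meets the wall transversally, so that $A(u)$ changes sign as one passes through $u_k$, the two germs into $\mathcal{C}_A$ ($\lambda>0$) and into $\mathcal{C}_{-A}$ ($\lambda<0$) are the two halves of a single arc through $u_k$; since both limit onto the same $u_k$ their peak energies coincide, $c_k=\overline{c}_k$, and the right and left blue curves of Figure~\ref{fig:combinedCC} close into the arcs of Figure~\ref{fig:combinedCC1}. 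Feeding the two ends of this picture into the bifurcation scheme already used in Theorems~\ref{thm2}--\ref{thm5} then gives, for every fixed $\lambda\in\mathbb{R}$, one sequence of negative-energy solutions cut off the red curves (with $\|u\|\to0$, bifurcation from $0$) and one sequence of positive-energy solutions cut off the blue arcs as $k\to\infty$ (so $c_k\to\infty$ and $\|u_k\|\to\infty$, bifurcation from $\infty$).

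The hard part is twofold, and is exactly why the statement is offered as a conjecture. First, the wall solutions need not exist when $a$ changes sign: if $A\ge0$ the constraint $A(u)=0$ confines $u$ to the subdomain $\{a=0\}$ and \eqref{pred} reduces to a standard superlinear problem there, but for sign-changing $A$ the set $\{A=0\}$ is not a subspace and there is no general mechanism yielding free critical points of $I_1$ on it at prescribed genus and energy. Second, without hypothesis $(C4)$ the term $B$ is not controlled by $A$ near $\{A=0\}$, so concentration profiles with $A(v)=0<B(v)$ are admissible and the energy barrier of the second step may simply be absent — this is the true origin of the finite $c^{**}$ in Figure~\ref{fig:combinedCC}, and if the barrier fails the arcs need not close. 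Securing that barrier (equivalently, a symmetric mountain-pass geometry for $\widetilde{\Lambda}^-(c,\cdot)$ around $\{A=0\}$ for every $c<c_k$) together with a global, Rabinowitz-type continuation of $\mathcal{Z}$ across the singular wall is what would turn the conjecture into a theorem.
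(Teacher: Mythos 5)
You treat the statement as what it is---a conjecture---and offer a programme rather than a proof; the paper likewise contains no proof, only the supporting evidence of Theorem \ref{thmapp1.1conje} and Section \ref{Sfurther}, so the right comparison is between programmes. Yours attacks the picture from the opposite end to the paper's. You aim directly at the peaks of the arcs in Figure \ref{fig:combinedCC1}, which you correctly locate at the wall $\{A=0\}$ (by \eqref{inedecrea}, $\partial\widetilde{\Lambda}/\partial c=-1/A(t(c,u)u)$, so the tangent to an energy curve can only become horizontal as $A\to 0$), and you propose to glue the $\mathcal{C}_A$- and $\mathcal{C}_{-A}$-branches there by continuation on the solution set. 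The paper instead extends the $\mathcal{C}_A$-branch \emph{across the axis} $\lambda=0$: via the auxiliary functional $c_0$, condition $(C5)$, and a specially constructed weight $a$ (Lemma \ref{c^**A>0}), Propositions \ref{c**attained} and \ref{c***} show---for $k=1$ only---that $\lambda_{c,1}^-$ continues past $c^{**}$, vanishes exactly at $c^{**}$, and becomes negative, the crossing solution being a minimizer of $c_0$ with $A(u)>0$. No wall solution is involved there; the fold you target is precisely what the paper never reaches. Your truncated-deformation-plus-barrier step is the all-$k$ analogue of the paper's ground-state barrier, Lemma \ref{infinterior} (the sets $A_\varepsilon$ play exactly the role of your excluded neighbourhood of $\{A=0\}$), with the paper substituting direct minimization for your Palais--Smale flow. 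Your diagnosis of the two obstructions---no general existence mechanism for wall junctions when $a$ changes sign, and loss of the barrier without $(C4)$, which is indeed the origin of the finite $c^{**}$---agrees with the paper's own reading (cf.\ Remark \ref{rmkcomplete} and Lemma \ref{topolopro}).

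One conceptual slip, though. A junction point $u_k$ on the wall is \emph{not} in general a free critical point of $I_1$, i.e.\ a solution of \eqref{pred}: from $\Phi_{\lambda_*}'(u_k)=0$ and $A(u_k)=0$ one only gets $I_1'(u_k)=\frac{\lambda_*}{\alpha}A'(u_k)$, and for sign-changing $a$ the condition $A(u)=0$ does not force $A'(u)=0$. The junction is a critical point of $I_1$ constrained to $\{A=0\}$ with Lagrange multiplier $\lambda_*$, and nothing places it at $\lambda=0$; in fact your claim that $\lambda_{c,k}^-\searrow 0$ as $c\nearrow c_k$ is contradicted for $k=1$ by Theorem \ref{thmapp1.1conje}, which has the branch pass through $\lambda=0$ at $c=c^{**}$ with $A>0$ and continue to strictly negative values, so any fold occurs at some $\lambda_*<0$ for that branch. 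The clean reduction to \eqref{pred} holds only when $a\ge 0$ (as in the paper's discussion after Theorem \ref{thm1}), but in that regime the $(C4)$-type hypothesis tends to hold, the wall problem degenerates ($N(u)=B(u)=0$ forces $u=0$), and the curves never fold (Theorem \ref{thm4})---so the case where your gluing points are cleanly describable is the case where there is nothing to glue. Your energy identity $c_k=\left(\frac{1}{\eta}-\frac{1}{\beta}\right)N(u_k)$ does survive, since it uses only $\langle\Phi_{\lambda_*}'(u_k),u_k\rangle=0$ and $A(u_k)=0$, so the link you draw between $c_k\to\infty$ and bifurcation from infinity remains intact.
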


To support our conjecture, we have the following result:
\begin{theorem}\label{thmapp1.1conje} Under the assumptions of Theorem \ref{thmapp1.1}, there exist $a\in L^\infty(\Omega)$ and $c^{***}>c^{**}$ such that $\lambda_{c,1}^-$ can be extended to $(c^*,c^{***})$, as a continuous and decreasing map. Moreover $\lambda_{c^{**},1}^-=0$ and $\lambda_{c,1}^-<0$ if $c\in (c^{**},c^{***})$.

\end{theorem}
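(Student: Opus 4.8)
My plan is to localize on the cone $\mathcal{C}_A\cap\mathcal{C}_B$ and use the explicit form of the fiber map to locate exactly where the ground-state level $\lambda^-_{c,1}=\inf_{\mathcal{S}_{\mathcal{C}_A\cap\mathcal{C}_B}}\widetilde{\Lambda}^-(c,\cdot)$ changes sign. For $u$ on the unit sphere (so $N(u)=1$, recalling $\eta=p$) one has $\varphi_{c,u}(t)=\alpha\,t^{-\alpha}g(t)/A(u)$ with $g(t)=\frac{1}{p}t^{p}-\frac{1}{\beta}t^{\beta}B(u)-c$; its maximizer $t^-(c,u)$ and the value $g(t^-(c,u))$ depend on $u$ only through $B(u)$, so
\[
\widetilde{\Lambda}^-(c,u)=\frac{\Pi\big(c,B(u)\big)}{A(u)},\qquad \Pi\big(c,B(u)\big):=\alpha\,t^-(c,u)^{-\alpha}\,g\big(t^-(c,u)\big).
\]
Hence the sign of $\widetilde{\Lambda}^-(c,u)$ is the sign of $g(t^-)$; imposing $g(t^-)=0$ (which forces $g'(t^-)=0$, as $t^-$ is critical) yields the zero level $\widetilde{\Lambda}^-(c,u)=0\iff c=c_0(u):=\frac{\beta-p}{p\beta}B(u)^{-p/(\beta-p)}$, independent of $A$. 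Moreover $\partial_c\widetilde{\Lambda}^-(c,u)=-\alpha/\big(t^-(c,u)^{\alpha}A(u)\big)<0$, so $c\mapsto\widetilde{\Lambda}^-(c,u)$ is strictly decreasing, positive for $c<c_0(u)$ and negative for $c>c_0(u)$.

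Next I would identify the two relevant thresholds. The value $c^{**}$ of Theorem \ref{thmapp1} is the positivity threshold $c^{**}=\inf_u c_0(u)=\frac{\beta-p}{p\beta}S_B^{-p/(\beta-p)}$, where $S_B=\sup\{B(u):u\in\mathcal{S}_{\mathcal{C}_A\cap\mathcal{C}_B}\}$ is attained, by subcritical compactness, at a maximizer $w$. The threshold governing the boundary control (H2)(c) is instead $c^{***}:=\frac{\beta-p}{p\beta}\big(S_B^{\mathrm{bd}}\big)^{-p/(\beta-p)}$ with $S_B^{\mathrm{bd}}:=\limsup\{B(u_n):\|u_n\|=1,\ A(u_n)\to0^+\}$, because $\widetilde{\Lambda}^-(c,\cdot)=\Pi(c,B(\cdot))/A(\cdot)\to+\infty$ as $A(u)\to0$ exactly while $c<c^{***}$ (then $g(t^-)>0$ near the boundary). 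The whole point is therefore to build $a$ with $c^{**}<c^{***}$, i.e. $S_B^{\mathrm{bd}}<S_B$.

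For the construction I would fix $b\ge0$ with $\mathcal{B}^+$ having nonempty interior and choose $a$ positive on a neighborhood of the concentration set of the $B$-maximizers and negative on a further part of $\mathcal{B}^+$; this guarantees $\mathcal{A}^+\cap\mathcal{B}^+\neq\emptyset$, $\mathcal{A}^-\cap\mathcal{B}^+\neq\emptyset$, makes (C4) fail (so $c^{**}<\infty$), and forces $A(w)>0$ at every maximizer $w$, so $S_B$ is attained inside $\mathcal{C}_A$. If some $(u_n)$ had $A(u_n)\to0$ and $B(u_n)\to S_B$, then, $A$ and $B$ being weakly continuous on bounded sets, a weak limit $u^*$ would satisfy $B(u^*)=S_B$ and $A(u^*)=0$, with $\|u^*\|=1$ by the $\beta$-homogeneity of $B$; thus $u^*$ would be a $B$-maximizer with $A(u^*)=0$, contradicting $A>0$ at every maximizer. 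Hence $S_B^{\mathrm{bd}}<S_B$ and $c^{***}>c^{**}$.

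With such an $a$, for every $c\in(c^*,c^{***})$ the boundary control (H2)(c) holds, since $B(u)\le S_B^{\mathrm{bd}}+o(1)$ near the boundary gives $c_0(u)\ge c^{***}-o(1)>c$ and hence $\widetilde{\Lambda}^-(c,\cdot)\to+\infty$; this also yields (H2)(a) (a very negative value needs large $B(u)$, which by construction keeps $A(u)$ away from $0$) and, via (C3), the Palais--Smale condition (H2)(b), just as in the proof of Theorem \ref{thm2}. Theorem \ref{thm1} then produces, for each $c\in(c^*,c^{***})$, a solution at the level $\lambda^-_{c,1}=\inf_{\mathcal{S}_{\mathcal{C}_A\cap\mathcal{C}_B}}\widetilde{\Lambda}^-(c,\cdot)$, which agrees on $(c^*,c^{**})$ with the quantity of Theorem \ref{thmapp1}, so this is a genuine extension; strict monotonicity is immediate from $\partial_c\widetilde{\Lambda}^-<0$, and continuity follows by confining the minimizers to a fixed interior set $\{A\ge\delta\}$ on compact $c$-subintervals (where near the boundary the functional is positive) together with joint continuity. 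Finally $\widetilde{\Lambda}^-(c^{**},w)=0$ together with positivity for $c<c^{**}$ gives $\lambda^-_{c^{**},1}=0$, while for $c\in(c^{**},c^{***})$ we have $c>c_0(w)=c^{**}$, so $\widetilde{\Lambda}^-(c,w)<0$ and thus $\lambda^-_{c,1}<0$. The main obstacle is precisely the construction in the third step — arranging that every $B$-maximizer is interior to the cone, which opens the gap $c^{**}<c^{***}$ and supplies the room to continue the branch into $\{\lambda<0\}$; the remaining steps merely reuse the machinery already set up for Theorem \ref{thm1}.
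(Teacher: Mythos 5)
Your overall strategy is the one the paper follows: identify the threshold $c^{**}$ as $\inf c_0$, where $c_0(u)=\frac{\beta-p}{p\beta}N(u)^{\beta/(\beta-p)}B(u)^{-p/(\beta-p)}$ (so that on the unit sphere the $c_0$-minimizers are exactly the $B$-maximizers), observe that $\widetilde{\Lambda}^-(c^{**},\cdot)\ge 0$ with equality precisely on that extremal set $M$, and then show that if $M$ lies in the \emph{interior} of $\mathcal{C}_A$ the negative-level set of $\widetilde{\Lambda}^-(c,\cdot)$ for $c$ slightly above $c^{**}$ stays in a region where $A$ is bounded away from zero, so the infimum is still attained and is negative. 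Your quantitative $c^{***}$ built from the boundary supremum $S_B^{\mathrm{bd}}$ plays exactly the role of the paper's localization lemma (Lemma \ref{infinterior}), and your re-verification of $(H2)$ on $(c^*,c^{***})$ replaces the paper's more pedestrian direct minimization in Proposition \ref{c***}; both routes are fine and rest on the same mechanism. The identities $c_0(u)=c\iff\widetilde{\Lambda}^-(c,u)=0$ and $\partial_c\widetilde{\Lambda}^-<0$, and the deduction $\lambda^-_{c^{**},1}=0$, $\lambda^-_{c,1}<0$ for $c\in(c^{**},c^{***})$, match Propositions \ref{c**attained} and \ref{c***}.

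The genuine gap is in the step you yourself single out as the main obstacle: the construction of $a$. Prescribing ``$a$ positive on a neighborhood of the concentration set of the $B$-maximizers and negative on a further part of $\mathcal{B}^+$'' is not workable, because a maximizer $w$ of $B$ on the unit sphere solves $-\Delta_p w=\mu\, b|w|^{\beta-2}w$ and, by the strong maximum principle, $|w|>0$ on all of a connected $\Omega$; its ``concentration set'' is the whole domain, so no open set is left on which to make $a$ negative. What you actually need is only the integral condition $A(w)>0$ for every $w\in M$ together with $S_B^{\mathrm{bd}}<S_B$, and this must be obtained by a perturbation argument rather than a pointwise sign prescription: the paper assumes $b^+$ vanishes on a ball $B$, sets $a_\eps=b^+-\eps\theta$ with $0\le\theta\in C_0^\infty(B)$, notes $\int_\Omega b^+|w|^\alpha>0$ for all $w\in M$, and uses the sequential weak compactness of $M$ (Lemma \ref{Mcompact}) to get a uniform positive lower bound, hence $A(w)=\int_\Omega a_\eps|w|^\alpha>0$ on $M$ for $\eps$ small. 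If you replace your construction by such a compactness-plus-small-perturbation argument, the rest of your proof goes through; note also that whichever construction you adopt must be checked against the standing hypothesis $\mathcal{A}^-\cap\mathcal{B}^+\neq\emptyset$ of Theorem \ref{thmapp1.1}, which your intended (but unworkable) choice was designed to satisfy and which the ball-perturbation choice does not automatically give.
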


Theorems \ref{thmapp1}, \ref{thmapp1.1} and \ref{thmapp1.1conje} improve the results in \cite{BrWu,BrWu1,Gu}. \\

Next we apply our results to the following problem:

\begin{equation} \label{103}
	\begin{cases}
		-\Delta_p u + |u|^{p-2}u = \lambda a(x)|u|^{\alpha-2}u+b(x)|u|^{\beta-2}u & \text{em } \mathbb{R}^N, \\
		\lim\limits_{|x|\to +\infty} u(x) = 0;
	\end{cases}
\end{equation}
where $p>N$, $1<\alpha<p<\beta$ and $a,b\in L^1(\mathbb{R}^N)$. We look for solutions of \eqref{103} in the standard Sobolev space $X:=W^{1,p}(\mathbb{R}^N)$ with norm given by
\begin{equation*}
	\|u\|=\left(\|u\|_p^p+\|\nabla u\|_p^p\right)^{\frac{1}{p}},\ u\in W^{1,p}(\mathbb{R}^N).
\end{equation*}
Now we have
\begin{equation*}
	N(u)=\int_{\mathbb{R}^N} |\nabla u|^pdx+\int_{\mathbb{R}^N} |u|^pdx,\ \ \ A(u)=\int_{\mathbb{R}^N} a(x)|u|^\alpha dx,\ \ \ B(u)=\int_{\mathbb{R}^N} a(x)|u|^\beta dx,\ u\in X.
\end{equation*}

\begin{theorem}\label{thmapp2} Suppose that the set  $A^+\cap B^+$ has an interior point, then there exists $c^*<0$ and $c^{**}>0$ such that
	\begin{enumerate}
		\item[i)] For all $c\in (c^*,0)$ there exist $\lambda_{c,k}^+>0$, $k\in \mathbb{N}$, with $\lambda_{c,k}\to \infty$ as $k\to \infty$, and $v_{c,k}\in \mathcal{C}_A$ such that 
		\begin{equation*}
			\Phi_{\lambda_{c,k}^+}(v_{c,k})=c\ \ \mbox{and}\ \ 	\Phi_{\lambda_{c,k}^+}'(v_{c,k})=0.
		\end{equation*}
Moreover, the function $\lambda_{c,k}^+$, $c\in (c^*,0)$ is continuous, decreasing and $\lim_{c\to 0^-}\lambda_{c,k}=0$.
\item[ii)] For each $\lambda>0$ we can find sequences $v_n\in \mathcal{C}_A$, $c_n\in (c^*,0)$ and $k_n\in \mathbb{N}$ such that $c_n\to 0$, $k_n\to \infty$ and
\begin{equation*}
			\Phi_{\lambda_{c_n,k_n}^+}(v_n)=c_n\ \ \mbox{and}\ \ 	\Phi_{\lambda_{c_n,k_n}^+}'(v_n)=0.
		\end{equation*}
Moreover, $v_n\to 0$ in $X$, so $(\lambda,0)$ is a bifurcation point for any $\lambda>0$.
\item[iii)] For all $c\in (c^*,c^{**})$ there exist $\lambda_{c,k}^->0$, $k\in \mathbb{N}$, with $\lambda_{c,k}^-\to \infty$ as $k\to \infty$, and $u_{c,k}\in \mathcal{C}_A\cap\mathcal{C}_B$ such that
\begin{equation*}
			\Phi_{\lambda_{c,k}^-}(u_{c,k})=c\ \ \mbox{and}\ \ 	\Phi_{\lambda_{c,k}^-}'(u_{c,k})=0.
		\end{equation*}
Moreover $\lambda_{c,k}^+<\lambda_{c,k}^-$ for all $c\in (c^*,0)$, and the function $\lambda_{c,k}^-$, $c\in (c^*,c^{**})$ is continuous and decreasing.
\end{enumerate}
If we assume furthermore that $a=b$, and $\mathcal{A}^-=\emptyset$, then $c^{**}$ can rreplaced by $\infty$ in $iii)$ and the following assertion holds:
\begin{enumerate}
		\item[iv)] For each $\lambda\in\mathbb{R}$ we can find sequences $u_n\in \mathcal{C}_A\cap \mathcal{C}_B$, $c_n\in (c^*,\infty)$ and $k_n\in \mathbb{N}$ such that $c_n\to \infty$, $k_n\to \infty$, $\lambda=\lambda_{c_n,k_n}^-$ and  
		\begin{equation*}
			\Phi_{\lambda_{c_n,k_n}^-}(u_n)=c_n\ \ \mbox{and}\ \ 	\Phi_{\lambda_{c_n,k_n}^-}'(u_n)=0.
		\end{equation*}
Moreover, $u_n\to \infty$ in $X$, so $(\lambda,\infty)$ is a bifurcation point for any $\lambda\in\mathbb{R}$.
\end{enumerate}
\end{theorem}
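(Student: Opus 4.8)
The plan is to realize problem \eqref{103} as an instance of the abstract functional \eqref{dephi} with $X=W^{1,p}(\mathbb{R}^N)$, $\eta=p$, and the stated $N,A,B$, and then to verify hypotheses $(C1)$–$(C3)$ (and $(C4)$ for the last part) so that Theorems~\ref{thm2} and \ref{thm4}, together with a genus count, yield every assertion. Condition $(C1)$ is immediate, since $N,A,B$ are even and respectively $p$-, $\alpha$- and $\beta$-homogeneous by construction. For $(C2)$, note that $N(u)=\|u\|^p$ gives the two-sided bound with $\eta=p$ and $C=C'=1$. The remaining upper bounds use $p>N$: Morrey's inequality furnishes a continuous embedding $W^{1,p}(\mathbb{R}^N)\hookrightarrow C^{0,1-N/p}(\mathbb{R}^N)\cap L^\infty(\mathbb{R}^N)$, so that $\|u\|_\infty\le C_\infty\|u\|$ for some $C_\infty>0$; combined with $a,b\in L^1(\mathbb{R}^N)$ this gives $|A(u)|\le\|a\|_{L^1}\|u\|_\infty^\alpha\le C\|u\|^\alpha$ and $|B(u)|\le\|b\|_{L^1}\|u\|_\infty^\beta\le C\|u\|^\beta$.

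The heart of the proof, and the main obstacle, is the compactness condition $(C3)$, which is delicate because the domain is unbounded. I would first show that $A'$ and $B'$ are completely continuous. If $u_n\wto u$ in $W^{1,p}(\mathbb{R}^N)$, then $(u_n)$ is bounded in $C^{0,1-N/p}$, hence equicontinuous and uniformly bounded, so by Arzelà--Ascoli $u_n\to u$ uniformly on every ball $B_R$, with $\sup_n\|u_n\|_\infty\le M<\infty$. Writing $g_\alpha(s)=|s|^{\alpha-2}s$, for $\|\phi\|\le 1$ one has
$$|\langle A'(u_n)-A'(u),\phi\rangle|\le \alpha C_\infty\int_{\mathbb{R}^N}|a|\,\big|g_\alpha(u_n)-g_\alpha(u)\big|\,dx,$$
and splitting over $B_R$ and its complement, the tail is bounded by $2\alpha C_\infty M^{\alpha-1}\int_{\mathbb{R}^N\setminus B_R}|a|$, small for $R$ large since $a\in L^1$, while on $B_R$ the uniform convergence $u_n\to u$ and continuity of $g_\alpha$ make the integrand small uniformly in $\phi$. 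Hence $A'(u_n)\to A'(u)$ in $X^*$, and the same argument (with $\beta$, $b$) gives $B'(u_n)\to B'(u)$. With this in hand, $(C3)$ follows from the $(S_+)$ character of $N'$: since $\eta\Phi_{\lambda_n}'(u_n)=N'(u_n)-\tfrac{\eta\lambda_n}{\alpha}A'(u_n)-\tfrac{\eta}{\beta}B'(u_n)$, and since $\langle A'(u_n),u_n-u\rangle,\langle B'(u_n),u_n-u\rangle\to 0$ (by complete continuity and $u_n-u\wto 0$), while $\Phi_{\lambda_n}'(u_n)\to 0$ and $(\lambda_n)$ is bounded, we get $\langle N'(u_n),u_n-u\rangle\to 0$; as $N'(u)[\phi]=p\int_{\mathbb{R}^N}\big(|\nabla u|^{p-2}\nabla u\cdot\nabla\phi+|u|^{p-2}u\,\phi\big)\,dx$ is, up to the factor $p$, the classical $(S_+)$ operator on $W^{1,p}(\mathbb{R}^N)$ associated to $-\Delta_p+|\cdot|^{p-2}(\cdot)$, this forces $u_n\to u$ strongly.

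Next I would compute the genus. The hypothesis that $\mathcal{A}^+\cap\mathcal{B}^+$ has an interior point means there is an open ball $U$ on which $a>0$ and $b>0$ a.e. For any nonzero $u\in W_0^{1,p}(U)$, extended by zero to $\mathbb{R}^N$, we have $A(u)=\int_U a|u|^\alpha>0$ and $B(u)=\int_U b|u|^\beta>0$, so $W_0^{1,p}(U)\setminus\{0\}\subset\mathcal{C}_A\cap\mathcal{C}_B$. Since $W_0^{1,p}(U)$ is infinite dimensional, for every $k$ it contains a $k$-dimensional subspace $V_k$, and $V_k\cap\mathcal{S}$ is odd-homeomorphic to $S^{k-1}$, giving $\gamma(\mathcal{S}_{\mathcal{C}_A\cap\mathcal{C}_B})\ge k$; hence $\gamma(\mathcal{C}_A)=\gamma(\mathcal{C}_A\cap\mathcal{C}_B)=\infty$. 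Assertions i), ii) and iii) (with a finite $c^{**}$) then follow directly from Theorem~\ref{thm2}.

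Finally, for the improvement $c^{**}=\infty$ and assertion iv), assume $a=b$ and $\mathcal{A}^-=\emptyset$, i.e. $a\ge 0$. Then $(C4)$ holds: if $(u_n)\subset\mathcal{C}_A$ is bounded with $A(u_n)\to 0$, then $\sup_n\|u_n\|_\infty\le M$ and, since $a\ge 0$,
$$0\le B(u_n)=\int_{\mathbb{R}^N}a\,|u_n|^\beta\le M^{\beta-\alpha}\int_{\mathbb{R}^N}a\,|u_n|^\alpha=M^{\beta-\alpha}A(u_n)\to 0.$$
Thus all of $(C1)$–$(C4)$ hold, and Theorem~\ref{thm4} applies, giving $c^{**}=\infty$ in iii) and the bifurcation-from-infinity statement iv). I expect the only genuinely difficult point in the whole argument to be the compactness $(C3)$ on $\mathbb{R}^N$, which is precisely where both structural hypotheses $p>N$ (yielding the $L^\infty$-embedding and local uniform convergence) and $a,b\in L^1$ (yielding tail control) are indispensable.
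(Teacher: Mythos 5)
Your proposal is correct and follows the same overall skeleton as the paper: realize \eqref{103} as an instance of \eqref{dephi}, verify $(C1)$--$(C3)$ (and $(C4)$ in the case $a=b$, $\mathcal{A}^-=\emptyset$), compute $\gamma(\mathcal{C}_A\cap\mathcal{C}_B)=\infty$ from the interior point of $\mathcal{A}^+\cap\mathcal{B}^+$, and then invoke Theorems \ref{thm2} and \ref{thm4}. Where you differ is in how the two technical conditions are discharged. For $(C3)$ the paper simply defers to \cite[Lemma 2.3]{AmBoPe}, whereas you give a self-contained argument: complete continuity of $A'$ and $B'$ via the Morrey embedding $W^{1,p}(\mathbb{R}^N)\hookrightarrow L^{\infty}$ (here $p>N$ is used), Arzel\`a--Ascoli on balls, and $L^1$ tail control on the weights, followed by the $(S_+)$ property of $N'$; this is correct and makes transparent exactly where $p>N$ and $a,b\in L^1$ enter. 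For $(C4)$ the paper passes to a weak limit and argues on supports, while you give the sharper pointwise estimate $0\le B(u_n)\le M^{\beta-\alpha}A(u_n)$ using $a=b\ge 0$ and the uniform $L^\infty$ bound; this is cleaner and avoids weak convergence altogether. One point worth noting in your favour: you correctly attribute items $i)$--$iii)$ to Theorem \ref{thm2} and only the $c^{**}=\infty$ improvement and item $iv)$ to Theorem \ref{thm4}, which is the logically precise reading (Theorem \ref{thm4} needs $(C4)$, available only under the extra hypotheses), whereas the paper's closing sentence lumps everything under Theorem \ref{thm4}. No gaps.
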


Theorem \ref{thmapp2} greatly improves \cite[Theorem 1.1]{AmBoPe}. In fact, aside from treating the case where $a,b$ can change sing, we see that in the case $a=b$ and $\mathcal{A}^-=\emptyset$, which is exactly the case contained in \cite[Theorem 1.1]{AmBoPe}, our results provides infinitely many solutions for all $\lambda\in \mathbb{R}$.

To conclude the applications let us mention that result analogous to equation \eqref{quasi} can be proved to the $p$-Fractional Laplacian equation 

\begin{equation}\label{105}
	\begin{cases}
		\displaystyle
		-2 \int_{\mathbb{R}^N} \frac{|u(y) - u(x)|^{p-2}(u(y) - u(x))}{|x - y|^{N + ps}} \, dy = \lambda a(x)|u|^{\alpha-2}u + b(x)|u|^{\beta-2}u & \text{in } \Omega, \\
		u = 0 & \text{in } \mathbb{R}^N \setminus \Omega.
	\end{cases}
\end{equation}
where $\Omega \subset \mathbb{R}^N$ is a bounded domain with smooth boundary, $s \in (0,1)$, $\lambda > 0$ is a parameter and $a,b\in L^\infty(\Omega)$. For the relevant definitions, we refer the reader to \cite{Gu}. Our results significantly improve upon those found therein.

\section{Proof of main results}
\subsection{Proof of Theorem \ref{thm1}}
In order to prove Theorem \ref{thm1} we will make use of \cite[Theorem 3.1]{Sz} (see also \cite[Section 4]{Sz} and the discussion there). We start with some technical results. First, let us show that condition $(CS)$ at the Appendix holds true, so we have a deformation lemma for non-complete spaces.

\begin{lemma}\label{CScondition} Suppose $(H1)$ and $(H2)$. Then condition $(CS)$ at the Appendix holds true with $X=\mathcal{S}_\mathcal{C}$, $d$ the Finsler metric and  $f(\cdot)=\widetilde{\Lambda}(c,\cdot)$.
\end{lemma}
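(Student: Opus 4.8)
The plan is to verify, one at a time, the ingredients that constitute condition $(CS)$ of the Appendix for the triple $(\mathcal{S}_\mathcal{C}, d, \widetilde{\Lambda}(c,\cdot))$, matching each to a part of $(H2)$; the underlying regularity --- that $\mathcal{S}_\mathcal{C}$ is a $C^1$-Finsler manifold and $\widetilde{\Lambda}(c,\cdot) \in C^1(\mathcal{S}_\mathcal{C})$ --- is already supplied by $(H1)$. In essence, $(CS)$ asks that $f := \widetilde{\Lambda}(c,\cdot)$ be bounded below, satisfy the Palais--Smale condition at the level where we deform, and have $d$-complete sublevel sets, so that the pseudo-gradient flow in the deformation lemma cannot escape $\mathcal{S}_\mathcal{C}$ through the locus where $I_2$ vanishes.

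The first two requirements are read off directly: boundedness from below of $\widetilde{\Lambda}(c,\cdot)$ on $\mathcal{S}_\mathcal{C}$ is $(H2)$-$(a)$, while the Palais--Smale condition at the candidate levels $\lambda_{c,k}$ of \eqref{50} is $(H2)$-$(b)$.

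The substantive point is the completeness of the sublevel sets. Fix $a \in \mathbb{R}$ and let $(u_n) \subset \mathcal{S}_\mathcal{C}$ be $d$-Cauchy with $\widetilde{\Lambda}(c,u_n) \le a$. Since any admissible path between two points of $\mathcal{S}_\mathcal{C}$ is a path in $X$, the intrinsic Finsler distance dominates the chordal one, $d(u,v) \ge \|u-v\|$, so $(u_n)$ is norm-Cauchy; as $X$ is uniformly convex, hence complete, and $\mathcal{S}$ is closed, there is $u \in \mathcal{S}$ with $u_n \to u$ in $X$. I would now exclude that $u$ lies on the boundary where the cone degenerates. If $I_2(u) = 0$, then by continuity $I_2(u_n) \to 0$, and $(H2)$-$(c)$ forces $\widetilde{\Lambda}(c,u_n) \to \infty$, contradicting $\widetilde{\Lambda}(c,u_n) \le a$; hence $I_2(u) \ne 0$. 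Since $u$ belongs to the norm-closure of the open cone $\mathcal{C}$ and its topological boundary is contained in $\{I_2 = 0\}$, we conclude $u \in \mathcal{C}$, so $u \in \mathcal{S}_\mathcal{C}$. Finally, because the Finsler metric is locally equivalent to the norm at the interior point $u$ --- through the $C^1$ charts of $(H1)$ --- the norm-convergence upgrades to $d(u_n, u) \to 0$, so $u$ is the $d$-limit and the sublevel set $\{\widetilde{\Lambda}(c,\cdot) \le a\}$ is $d$-complete. This yields $(CS)$.

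The main obstacle is this last metric interplay. The inequality $d \ge \|\cdot\|$ gives one direction for free and lets us produce a norm-limit, but converting it to a genuine $d$-limit (needed to remain inside the intrinsic geometry of the deformation lemma) requires the local Lipschitz equivalence of $d$ and the ambient norm at interior points of $\mathcal{S}_\mathcal{C}$. This rests on the Implicit Function Theorem charts from $(H1)$ --- the very charts realizing the diffeomorphism $\mathcal{S}_\mathcal{C} \cong \mathcal{N}_c^\pm$. With the metric comparison in hand and escape to $\{I_2 = 0\}$ ruled out by $(H2)$-$(c)$, the three parts of $(H2)$ assemble into $(CS)$ and the non-complete deformation lemma of \cite{Sz} applies.
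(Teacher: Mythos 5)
Your overall route is the same as the paper's: a Cauchy sequence for the Finsler metric is norm-Cauchy (the intrinsic distance dominates the chordal one), hence norm-convergent to some $u$ with $\|u\|=1$, and one must rule out that the limit escapes $\mathcal{S}_\mathcal{C}$; your extra observation that norm-convergence to an interior point upgrades to convergence in the Finsler metric is a point the paper leaves implicit, and is welcome. One caveat before the main issue: condition $(CS)$ in the Appendix is only the single implication ``non-convergent Cauchy sequence $\Rightarrow f(u_n)\to\infty$''; boundedness from below and the Palais--Smale condition are separate hypotheses of Theorem \ref{dl} and Lemma \ref{genus}, not components of $(CS)$. Your reformulation of $(CS)$ as completeness of every sublevel set is equivalent to it for continuous $f$, so this misreading is harmless, but $(H2)$-$(a)$ and $(H2)$-$(b)$ play no role in this particular lemma.

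The substantive problem is the step ``the topological boundary of $\mathcal{C}$ is contained in $\{I_2=0\}$, hence $I_2(u)\neq 0$ implies $u\in\mathcal{C}$.'' Hypothesis $(H1)$ only asserts that $\mathcal{C}$ is an open cone contained in $\{I_2\neq 0\}$; it does not make $\mathcal{C}$ a union of components of $\{I_2\neq 0\}$, and in the paper's own application the claim fails: for $\mathcal{C}=\mathcal{C}_A\cap\mathcal{C}_B$ the boundary contains points with $B(u)=0<A(u)$, where $I_2=\tfrac{1}{\alpha}A\neq 0$. At such a limit point $(H2)$-$(c)$ gives no information, so your argument does not exclude a Cauchy sequence with $\widetilde{\Lambda}(c,u_n)\le a$ converging to that part of $\partial\mathcal{C}$ — and, to be fair, the paper's one-line proof elides exactly the same point by invoking ``condition $(H2)$'' for an arbitrary boundary limit. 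Closing it requires either the structural hypothesis $\partial\mathcal{C}\cap\mathcal{S}\subset\{I_2=0\}$ (true for $\mathcal{C}=\mathcal{C}_A$, false for $\mathcal{C}_A\cap\mathcal{C}_B$) or a strengthening of $(H2)$-$(c)$ to coercivity near all of the boundary of $\mathcal{S}_\mathcal{C}$, which is what Lemma \ref{topolopro} actually provides in the applications via \eqref{N-bounded} when $B(u_n)\to 0$. You should at least flag that this case is not covered by the stated hypotheses.
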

\begin{proof} Suppose that $(u_n) \subset \mathcal{S}_\mathcal{C}$ is a Cauchy sequence with respect to the Finsler metric of $S_{\mathcal{C}}$. Then, as a sequence in $X\setminus\{0\}$ we have two possibilities: $u_n\to u\in \mathcal{S}_\mathcal{C}$ or $u_n\to  u\in \partial\mathcal{C}$. If $u_n$ does not converge in $\mathcal{S}_\mathcal{C}$, then the second possibility happens and, thanks to condition $(H2)$, we obtain that $\widetilde{\Lambda}(c,u_n)\to \infty$.
\end{proof}

Given $\lambda\in \mathbb{R}$ we denote by $K_\lambda$ the set o critical points of $\widetilde{\Lambda}(c,\cdot)$ at the level $\lambda$. By $(H2)$ this set is compact and then $\gamma (K_\lambda)$ is well defined. In the next result we will use the properties of genus contained in \cite[Proposition 2.3]{Sz}. See also the proof of \cite[Corollary 4.1]{Sz} and the properties of category in \cite[Proposition 2.2]{Sz}.

\begin{lemma}\label{genus} Suppose $(H1)$, $(H2)$ and $\widetilde{\Lambda}(c,\cdot)$ satisfies the Palais--Smale condition at any level. Fix $m\ge 2$ and suppose that $\gamma(\mathcal{S}_\mathcal{C})\ge m$. If $\lambda \in \mathbb{R}$, then there exists $\varepsilon>0$ such that 
	\begin{equation*}
		\mbox{if}\ \lambda-\varepsilon<\lambda_{c,k}\le\cdots\le \lambda_{c,k+m-1}<\lambda+\varepsilon\ \mbox{then}\ \gamma(K_\lambda)\ge m.
	\end{equation*}

\end{lemma}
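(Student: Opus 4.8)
The plan is to run the classical genus-based multiplicity argument of Clark--Rabinowitz type, adapted to the (non-complete) Finsler manifold $\mathcal{S}_\mathcal{C}$ by means of the deformation lemma made available through Lemma \ref{CScondition}. The whole thing is a proof by contradiction: assuming both the clustering hypothesis and $\gamma(K_\lambda)\le m-1$, I would manufacture a compact symmetric set of genus at least $k$ lying in the sublevel set $\{\widetilde{\Lambda}(c,\cdot)\le\lambda-\varepsilon\}$, which contradicts the definition \eqref{50} of $\lambda_{c,k}$. Note that $\gamma(\mathcal{S}_\mathcal{C})\ge m$ guarantees that the levels $\lambda_{c,k},\dots,\lambda_{c,k+m-1}$ in the hypothesis are well defined and finite.

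First I would fix the neighborhood and the deformation. Since $\widetilde{\Lambda}(c,\cdot)$ satisfies the Palais--Smale condition at the level $\lambda$, the set $K_\lambda$ is compact, so $\gamma(K_\lambda)$ is finite, and by the continuity property of the genus in \cite[Proposition 2.3]{Sz} there is an open symmetric neighborhood $U$ of $K_\lambda$ in $\mathcal{S}_\mathcal{C}$ with $\gamma(\overline{U})=\gamma(K_\lambda)$ (take $U=\emptyset$ if $\lambda$ is a regular value). Using Lemma \ref{CScondition}, which makes the non-complete deformation lemma of \cite[Theorem 3.1]{Sz} applicable with $f=\widetilde{\Lambda}(c,\cdot)$, together with the Palais--Smale condition at $\lambda$, I obtain an $\varepsilon>0$ and an odd continuous deformation $\eta$ with
$$\eta\big(1,\ \{\widetilde{\Lambda}(c,\cdot)\le\lambda+\varepsilon\}\setminus U\big)\subset\{\widetilde{\Lambda}(c,\cdot)\le\lambda-\varepsilon\}.$$
This $\varepsilon$, which depends only on $\lambda$ through $K_\lambda$ and $U$, is the one claimed in the statement; the oddness of $\eta$ is available because $\widetilde{\Lambda}(c,\cdot)$ is even.

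Next, assuming the clustering hypothesis holds for this $\varepsilon$, I would use \eqref{50} to pick $M\in\F_{k+m-1}$ with $\sup_M\widetilde{\Lambda}(c,\cdot)<\lambda+\varepsilon$, so that $M\subset\{\widetilde{\Lambda}(c,\cdot)\le\lambda+\varepsilon\}$. Setting $M'=M\setminus U$ (closed and symmetric, as $U$ is open and $M$ compact), the subadditivity of the genus gives $\gamma(M')\ge\gamma(M)-\gamma(\overline{U})\ge(k+m-1)-\gamma(K_\lambda)$. If $\gamma(K_\lambda)\le m-1$, then $\gamma(M')\ge k$. Since $M'\subset\{\widetilde{\Lambda}(c,\cdot)\le\lambda+\varepsilon\}\setminus U$, the set $\eta(1,M')$ is a compact symmetric subset of $\{\widetilde{\Lambda}(c,\cdot)\le\lambda-\varepsilon\}$, and the monotonicity of the genus under the odd map $\eta(1,\cdot)$ yields $\gamma(\eta(1,M'))\ge\gamma(M')\ge k$. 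Hence $\eta(1,M')\in\F_k$ and $\lambda_{c,k}\le\sup_{\eta(1,M')}\widetilde{\Lambda}(c,\cdot)\le\lambda-\varepsilon$, contradicting $\lambda_{c,k}>\lambda-\varepsilon$. Therefore $\gamma(K_\lambda)\ge m$.

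The main obstacle, and precisely the reason the preceding Lemma \ref{CScondition} was needed, is the availability of a valid deformation lemma on $\mathcal{S}_\mathcal{C}$, which is not complete for the Finsler metric; condition $(H2)$-$(c)$, encoded in the property $(CS)$, prevents Palais--Smale sequences from escaping toward $\partial\mathcal{C}$ and is exactly what repairs the deformation construction. A secondary point to verify carefully is that both the neighborhood and the deformation can be taken symmetric (equivariant under $u\mapsto-u$), so that the genus estimates are preserved; this rests on the evenness of $\widetilde{\Lambda}(c,\cdot)$ inherited from the even functionals $I_1,I_2$ and on the symmetry of the cone $\mathcal{C}$.
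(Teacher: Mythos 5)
Your proposal is correct and follows essentially the same route as the paper's proof: a symmetric neighborhood $U$ of $K_\lambda$ with $\gamma(\overline{U})=\gamma(K_\lambda)$, the odd deformation supplied by Lemma \ref{CScondition} together with the deformation lemma, the choice of $M\in\F_{k+m-1}$ with $\sup_M\widetilde{\Lambda}(c,\cdot)<\lambda+\varepsilon$, the genus estimate $\gamma(M\setminus U)\ge\gamma(M)-\gamma(K_\lambda)\ge k$, and the resulting contradiction $\lambda_{c,k}\le\lambda-\varepsilon$. The only cosmetic difference is that you work with the already-closed set $M\setminus U$ where the paper takes $\overline{M\setminus N(K_\lambda)}$; both are valid.
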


\begin{proof} Indeed, there exists a neighborhood $N(K_\lambda)\in \F$ such that $\gamma(N(K_\lambda))=\gamma(K_\lambda)$. By Lemma \ref{CScondition} we can apply Theorem \ref{dl} to find a deformation $\eta\in C(\mathcal{S}_\mathcal{C},\mathcal{S}_\mathcal{C})$, which in our case can be assumed to be odd (see \cite{CoDeMa}), and is such that
	\begin{equation}\label{inclusub}
		\eta((\widetilde{\Lambda}(c,\cdot))^{\lambda+\varepsilon}\setminus N(K_\lambda))\subset ((\widetilde{\Lambda}(c,\cdot))^{\lambda-\varepsilon}.
	\end{equation}
Now suppose, on the contrary, that $\gamma(K_\lambda)<m$. Choose $M\in \F_{k+m-1}$ such that $\sup_{u\in M}\widetilde{\Lambda}(c,u)<\lambda+\varepsilon$. By \eqref{inclusub} it follows that
\begin{equation*}
	\eta(\overline{M\setminus N(K_\lambda)})\subset  ((\widetilde{\Lambda}(c,\cdot))^{\lambda-\varepsilon}.
\end{equation*}
Moreover
\begin{equation*}
	\gamma(	\eta(\overline{M\setminus N(K_\lambda)}))\ge \gamma(\overline{M\setminus N(K_\lambda)})\ge \gamma(M)-\gamma(K_\lambda)>k-1.
\end{equation*}
Since $	\eta(\overline{M\setminus N(K_\lambda)})$ is compact and symmetric, we conclude that $	\eta(\overline{M\setminus N(K_\lambda)})\in \F_k$. However, this contradicts the inequality
\begin{equation*}
 \lambda-\varepsilon<c_k\le \sup_{u\in \eta(\overline{M\setminus N(K_\lambda)})}\widetilde{\Lambda}(c,u)\le \lambda-\varepsilon, 
 \end{equation*}
and thus $\gamma(K_\lambda)\ge m$.

\end{proof}

We are now in position to prove Theorem \ref{thm1}:

\begin{proof}[Proof of Theorem \ref{thm1}] It suffices to show that if $c\in I$ and $k\le \gamma(\mathcal{S}_\mathcal{C})$, then $\lambda_{c,k}$ is a critical value of $\widetilde{\Lambda}(c,\cdot)$. We claim that, for all $\lambda\in \mathbb{R}$, the set $(\widetilde{\Lambda}(c,\cdot))^\lambda:=\{u\in S_{\mathcal{C}}:\ \widetilde{\Lambda}(c,u)\le \lambda \}$ is complete. Indeed, suppose that $(u_n)\subset (\widetilde{\Lambda}(c,\cdot))^\lambda$ is a Cauchy sequence with respect to the Finsler metric of $S_{\mathcal{C}}$. Then, as a sequence in $X\setminus\{0\}$ we have two possibilities: $u_n\to u\in \mathcal{S}_\mathcal{C}$ or $u_n\to  u\in \partial\mathcal{C}$. Thanks to condition $(H2)$ the second possibility is ruled out, so $u_n\to u\in \mathcal{S}_\mathcal{C}$ and the claim is proved. By \cite[Theorem 3.1]{Sz} we conclude that $\lambda_{c,k}$ is a critical value of $\widetilde{\Lambda}(c,\cdot)$. Now we can use \eqref{equi} and \eqref{derivatives} to obtain  $u_{c,k}\in \mathcal{C}$ such that 
	\begin{equation*}
		\Phi_{\lambda_{c,k}}(u_{c,k})=c\ \ \mbox{and}\ \ 	\Phi_{\lambda_{c,k}}'(u_{c,k})=0.
	\end{equation*}

To conclude, suppose that $\gamma(\mathcal{S}_\mathcal{C})=\infty$. Then $\lambda_{c,k}<\infty$ for all $k\in \mathbb{N}$ and we can assume that $\lim_{k\to \infty}\lambda_{c,k}= \lambda\in (0,\infty]$. We claim that $\lambda=\infty$. If not, then we can apply Lemma \ref{genus} to conclude that $\gamma(K_\lambda)=\infty$, which is a contradiction.

\end{proof}

\subsection{Conditions $(C1)$-$(C3)$ imply $(H1)$ and $(H2)$ } \label{abstractarguments}
In this section, we assume that $\Phi_\lambda$ is given by \eqref{dephi}, 
 under conditions $(C1)$-$(C3)$. Our goal is to show that conditions $(H1)$ and $(H2)$ are satisfied.
 Clearly for any $u\in X$ such that $A(u)\neq 0$ and $c\in \mathbb{R}$ we have
\begin{equation*}
	\lambda(c,u)=\frac{\frac{1}{\eta}N(u)-\frac{1}{\beta}B(u)-c}{\frac{1}{\alpha}A(u)}.
\end{equation*}

Let us show the existence of $I$ and $\mathcal{C}$ for which conditions $(H1)$ and $(H2)$ hold true. To this end we study the fibering maps associated with $\lambda(c,\cdot)$: given $u\in \mathcal{C}_A$, we write 
\begin{equation*}
	\varphi_{c,u}(t):=\lambda_c(tu)=\frac{\frac{t^{\eta-\alpha}}{\eta}N(u)-\frac{t^{\beta-\alpha}}{\beta}B(u)-t^{-\alpha}c}{\frac{1}{\alpha}A(u)}, \quad \forall t>0.
\end{equation*}
Then it is clear that
\begin{equation}\label{Nehari}
	\mathcal{N}_c=\{u\in \mathcal{C}_A:\ \varphi'_{c,u}(1)=0\}=\left\{u\in \mathcal{C}_A:\ \frac{\eta-\alpha}{\eta}N(u)-\frac{\beta-\alpha}{\beta}B(u)+\alpha c=0\right\}.
\end{equation}
We write
\begin{equation*}
	\mathcal{N}_c^+=\{u\in 	\mathcal{N}_c:\ \varphi''_{c,u}(1)>0\},
\quad \mbox{and} \quad
	\mathcal{N}_c^-=\{u\in 	\mathcal{N}_c:\ \varphi''_{c,u}(1)<0\}.
\end{equation*}

Let us prove that for suitable values of $c$ the Nehari sets $\mathcal{N}_c^\pm$ are non-empty, symmetric, $C^1$-Finsler manifolds and also natural constraints to $\lambda_c$. Recall that
\begin{equation*}
	 \mathcal{C}_A\ \mbox{and}\ \mathcal{C}_B\ \mbox{are open cones}.
\end{equation*}
We start with a technical lemma whose proof is straightforward.

\begin{lemma} \label{extremalnehari} Suppose $(C1)$. Then for any $u\in  \mathcal{C}_A$ the system $\varphi_{c,u}'(t)=\varphi''_{c,u}(t)=0$ has a solution $(c,t)\in(\mathbb{R},(0,\infty))$ if, and only if, $u\in  \mathcal{C}_B$. Moreover, in this case the solution is unique, and given by
	\begin{equation*}
	t(u) = \left[ \frac{(\eta - \alpha) N(u)}{(\beta - \alpha) B(u)} \right]^{\frac{1}{\beta-\eta}},
\end{equation*}
and
\begin{equation}\label{defc}
	c(u) = -\frac{(\eta-\alpha)(\beta-\eta)}{\eta\beta\alpha}\left(\frac{\eta-\alpha}{\beta-\alpha}\right)^{\frac{\eta}{\beta-\eta}}\frac{N(u)^{\frac{\beta}{\beta-\eta}}}{B(u)^{\frac{\eta}{\beta-\eta}}}.
\end{equation}
\end{lemma}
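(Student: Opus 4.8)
The plan is to exploit the positivity of $A(u)$ on $\mathcal{C}_A$ to turn the system into two algebraic equations and then eliminate $c$. Since $\frac{1}{\alpha}A(u)>0$ is a positive constant, multiplying $\varphi_{c,u}$ by it does not affect where its derivatives vanish, so I would set $g(t):=\frac{t^{\eta-\alpha}}{\eta}N(u)-\frac{t^{\beta-\alpha}}{\beta}B(u)-t^{-\alpha}c$ and observe that $\varphi'_{c,u}(t)=\varphi''_{c,u}(t)=0$ is equivalent to $g'(t)=g''(t)=0$. Writing $N=N(u)$, $B=B(u)$ for brevity, I would compute $g'$ and $g''$ and clear the negative powers of $t$ by multiplying the first equation by $t$ and the second by $t^2$. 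Abbreviating the resulting monomials as $P=\frac{\eta-\alpha}{\eta}t^{\eta-\alpha}N$, $Q=\frac{\beta-\alpha}{\beta}t^{\beta-\alpha}B$ and $R=\alpha c\,t^{-\alpha}$, the two equations read $P-Q+R=0$ and $(\eta-\alpha-1)P-(\beta-\alpha-1)Q-(\alpha+1)R=0$.

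Next I would eliminate $c$ (that is, $R$) from this linear system. Solving the first equation for $R=Q-P$ and substituting into the second, the $\alpha$-dependent terms cancel and the system collapses to the single relation $\eta P=\beta Q$. Unwinding the definitions, this is exactly $(\eta-\alpha)N=(\beta-\alpha)\,t^{\beta-\eta}B$, whence $t^{\beta-\eta}=\frac{(\eta-\alpha)N}{(\beta-\alpha)B}$. Because $1<\alpha<\eta<\beta$ forces $\eta-\alpha,\beta-\alpha,\beta-\eta>0$, and because $N(u)>0$ by $(C2)$ (as $u\in\mathcal{C}_A\subset X\setminus\{0\}$), the right-hand side is positive precisely when $B(u)>0$, i.e. when $u\in\mathcal{C}_B$; this gives the claimed equivalence. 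When $u\in\mathcal{C}_B$, strict monotonicity of $t\mapsto t^{\beta-\eta}$ on $(0,\infty)$ produces a unique positive root, namely the stated $t(u)$.

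Finally I would recover $c(u)$ by back-substitution. From $\eta P=\beta Q$ one has $Q-P=-\frac{\beta-\eta}{\beta}P$, so $R=Q-P$ becomes $\alpha c\,t^{-\alpha}=-\frac{\beta-\eta}{\beta}\cdot\frac{\eta-\alpha}{\eta}t^{\eta-\alpha}N$; solving for $c$ gives $c=-\frac{(\eta-\alpha)(\beta-\eta)}{\alpha\beta\eta}t^\eta N$, and inserting $t^\eta=\bigl[\frac{(\eta-\alpha)N}{(\beta-\alpha)B}\bigr]^{\eta/(\beta-\eta)}$ together with $N\cdot N^{\eta/(\beta-\eta)}=N^{\beta/(\beta-\eta)}$ yields the displayed expression for $c(u)$. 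I would also point out that every operation used in the elimination is reversible, so the pair $(c(u),t(u))$ genuinely solves $g'=g''=0$ (sufficiency), not merely the necessary relation $\eta P=\beta Q$.

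There is essentially no conceptual obstacle here, since the lemma is purely algebraic; the only point requiring care is the bookkeeping in the elimination step, namely choosing the linear combination that removes $c$ so that the $\alpha$-terms cancel cleanly, and keeping track that one obtains an \emph{equivalence} of the two formulations rather than a one-sided necessary condition.
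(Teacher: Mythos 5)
Your proof is correct and complete; the paper itself omits the argument entirely, introducing the statement as ``a technical lemma whose proof is straightforward'', and your elimination of $c$ from the two cleared derivative equations (reducing them to $\eta P=\beta Q$ plus $R=Q-P$, then back-substituting) is exactly the routine computation being alluded to. The one point worth noting is that the positivity of $N(u)$, which you need both for the equivalence with $u\in\mathcal{C}_B$ and for $t(u)$ to be a well-defined positive number, follows from $(C2)$ rather than from the stated hypothesis $(C1)$ alone --- but $(C2)$ is assumed throughout that section of the paper, so your appeal to it is appropriate and arguably more careful than the lemma's own statement.
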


\begin{lemma}\label{extremalbounded} Suppose $(C1)$ and $(C2)$. Then the functional $c(u)$, defined by \eqref{defc} for $u\in C_A\cap\mathcal{C}_B$, is bounded away from zero.
\end{lemma}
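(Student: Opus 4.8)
The plan is to reduce the claim to a uniform positive lower bound for the ratio $N(u)^{\beta/(\beta-\eta)}/B(u)^{\eta/(\beta-\eta)}$ occurring in \eqref{defc}, and to obtain that bound from the growth estimates in $(C2)$. The key structural observation is that this ratio is $0$-homogeneous (consistent with the scaling underlying Lemma \ref{extremalnehari}), so that all dependence on $\|u\|$ cancels and only the structural constants $C$, $\alpha$, $\eta$, $\beta$ survive.

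First I would clarify the signs. Since $1<\alpha<\eta<\beta$, the prefactor $-\frac{(\eta-\alpha)(\beta-\eta)}{\eta\beta\alpha}\left(\frac{\eta-\alpha}{\beta-\alpha}\right)^{\eta/(\beta-\eta)}$ is strictly negative, while for $u\in\mathcal{C}_A\cap\mathcal{C}_B$ we have $N(u)>0$ (indeed $N(u)\ge C^{-1}\|u\|^\eta$) and $B(u)>0$ by definition of $\mathcal{C}_B$; hence the ratio is positive and $c(u)<0$ throughout $\mathcal{C}_A\cap\mathcal{C}_B$. Consequently ``bounded away from zero'' amounts to producing a constant $\delta>0$ with $c(u)\le-\delta$ for all such $u$, which is exactly a uniform positive lower bound for the ratio multiplied by the (fixed, negative) prefactor.

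Next I would invoke $(C2)$. Raising the lower bound $N(u)\ge C^{-1}\|u\|^\eta$ to the positive power $\beta/(\beta-\eta)$ gives $N(u)^{\beta/(\beta-\eta)}\ge C^{-\beta/(\beta-\eta)}\|u\|^{\eta\beta/(\beta-\eta)}$, and raising the upper bound $B(u)\le C\|u\|^\beta$ to the positive power $\eta/(\beta-\eta)$ (legitimate because $B(u)>0$ on $\mathcal{C}_B$) gives $B(u)^{\eta/(\beta-\eta)}\le C^{\eta/(\beta-\eta)}\|u\|^{\beta\eta/(\beta-\eta)}$. Dividing, the exponents of $\|u\|$ in numerator and denominator are both $\eta\beta/(\beta-\eta)$ and cancel, leaving $N(u)^{\beta/(\beta-\eta)}/B(u)^{\eta/(\beta-\eta)}\ge C^{-(\beta+\eta)/(\beta-\eta)}>0$. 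Multiplying by the negative prefactor then yields the required $\delta$.

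I do not anticipate a genuine obstacle: the argument is a direct application of $(C2)$. The only points to keep in mind are that the cancellation of the $\|u\|$-powers is precisely the $0$-homogeneity forced by $(C1)$, and that the positivity $B(u)>0$ on $\mathcal{C}_B$ is what makes it legitimate to raise the upper bound for $B$ to a fractional power and then place it in the denominator.
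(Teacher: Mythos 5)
Your proof is correct and follows essentially the same route as the paper: both arguments rest on the $0$-homogeneity of $c(\cdot)$ together with the $(C2)$ bounds $N(u)\ge C^{-1}\|u\|^{\eta}$ and $B(u)\le C\|u\|^{\beta}$; the paper first restricts to the unit sphere, while you let the powers of $\|u\|$ cancel directly, which is the same observation made explicit. No gaps.
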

\begin{proof} From \eqref{defc} it is clear that the functional $c(u)$ is $0$-homogeneous. Therefore it suffices to prove that 
	\begin{equation*}
		\sup_{u\in S\cap \mathcal{C}_A\cap\mathcal{C}_B}c(u)<0.
	\end{equation*}
From $(C2)$ we know that $B$ is bounded from above and $N$ is away from zero in $S\cap \mathcal{C}_A\cap\mathcal{C}_B$,
so the desired conclusion follows from the expression of $c(u)$.
\end{proof}
By Lemma \ref{extremalbounded} we have that
\begin{equation*}
	c^*:=\sup_{u\in \mathcal{C}_A\cap\mathcal{C}_B}c(u)<0.
\end{equation*}
\begin{lemma}\label{neharisets}  Suppose $(C1)$ and $(C2)$.
	\begin{enumerate}
		\item[i)] Let $u\in\mathcal{C}_A\setminus \mathcal{C}_B$.
		\begin{enumerate}
			\item If $c\ge 0$ then $\varphi_{c,u}$ has no critical points.
			\item If $c<0$ then $\varphi_{c,u}$ has a unique  nontrivial critical point $t_c^+(u)$, which is  a global minimizer of Morse type. 
		\end{enumerate}
\item[ii)] Let $u\in \mathcal{C}_A\cap\mathcal{C}_B$.
\begin{enumerate}
				\item If $c\ge 0$ then $\varphi_{c,u}$ has a unique  nontrivial critical point $t_c^-(u)$, which is  a global maximizer of Morse type.  
				\item If $c\in(c^*,0)$ then $\varphi_{c,u}$ has exactly two nontrivial critical points  $t_c^+(u)<t_c^-(u)$, which are, respectively, a local minimizer and a local maximizer, both of Morse type. 
			\end{enumerate}
\end{enumerate}
\end{lemma}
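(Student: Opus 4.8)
The plan is to reduce the whole statement to a sign analysis of the single auxiliary function
$$g(t) := \frac{\eta-\alpha}{\eta}N(u)\,t^{\eta}-\frac{\beta-\alpha}{\beta}B(u)\,t^{\beta}+\alpha c,\qquad t>0,$$
which encodes the fibering map through the identity $\varphi_{c,u}'(t)=\frac{\alpha}{A(u)}\,t^{-\alpha-1}g(t)$, obtained by differentiating the displayed formula for $\varphi_{c,u}$ and factoring out $t^{-\alpha-1}$. Since $\frac{\alpha}{A(u)}t^{-\alpha-1}>0$ on $(0,\infty)$ (recall $u\in\mathcal{C}_A$), the nontrivial critical points of $\varphi_{c,u}$ are exactly the zeros of $g$, and the sign of $\varphi_{c,u}'$ matches that of $g$. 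Differentiating the identity and using $g(t_0)=0$ at a zero $t_0$ yields $\varphi_{c,u}''(t_0)=\frac{\alpha}{A(u)}t_0^{-\alpha-1}g'(t_0)$; hence a \emph{simple} zero of $g$ is automatically a nondegenerate (Morse) critical point of $\varphi_{c,u}$, a minimizer when $g'(t_0)>0$ and a maximizer when $g'(t_0)<0$. I record once and for all that $g(0^+)=\alpha c$.

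First I would dispose of part i), where $u\in\mathcal{C}_A\setminus\mathcal{C}_B$, i.e. $B(u)\le 0$. Then $g'(t)=(\eta-\alpha)N(u)t^{\eta-1}-(\beta-\alpha)B(u)t^{\beta-1}>0$ on $(0,\infty)$ (both terms nonnegative, the first strictly positive by $(C2)$), so $g$ is strictly increasing with $g(\infty)=+\infty$. If $c\ge 0$ then $g(0^+)=\alpha c\ge 0$, so $g>0$ throughout and $\varphi_{c,u}$ has no critical point; if $c<0$ then $g$ has a single simple zero $t_c^+(u)$ with $g'(t_c^+(u))>0$, a Morse minimizer, which is global because the $-t^{-\alpha}c$ term forces $\varphi_{c,u}\to+\infty$ as $t\to0^+$ while the leading term forces $\varphi_{c,u}\to+\infty$ as $t\to\infty$.

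Next, for $u\in\mathcal{C}_A\cap\mathcal{C}_B$ (so $B(u)>0$), the equation $g'(t)=0$ has the unique root $t^*=\big[(\eta-\alpha)N(u)/((\beta-\alpha)B(u))\big]^{1/(\beta-\eta)}$, which is exactly the $t(u)$ of Lemma \ref{extremalnehari}; thus $g$ strictly increases on $(0,t^*)$, strictly decreases on $(t^*,\infty)$, and $g(\infty)=-\infty$. The key computation is the peak value: since $t^*$ is independent of $c$ and $g$ depends on $c$ only through the additive constant $\alpha c$, Lemma \ref{extremalnehari} (which characterizes $c(u)$ as the value making the peak vanish, i.e. the double critical point $g=g'=0$) gives $g(t^*)=\alpha\big(c-c(u)\big)$. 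When $c\ge 0$ we have $g(t^*)>g(0^+)=\alpha c\ge 0$, so $g$ has exactly one zero $t_c^-(u)>t^*$ on its decreasing branch, where $g'<0$; this is a Morse maximizer, global since here $\varphi_{c,u}\to-\infty$ at both endpoints. This proves ii)(a).

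The crux is ii)(b), and this is the only step where the threshold $c^*$ is genuinely needed. For $c\in(c^*,0)$ one has $g(0^+)=\alpha c<0$, while the definition $c^*=\sup_{u\in\mathcal{C}_A\cap\mathcal{C}_B}c(u)$ from Lemma \ref{extremalbounded} supplies the \emph{uniform} inequality $c>c^*\ge c(u)$, whence $g(t^*)=\alpha(c-c(u))>0$. Therefore $g$ climbs from a negative value at $0^+$ to a strictly positive maximum and then descends to $-\infty$, producing precisely two simple zeros $t_c^+(u)<t^*<t_c^-(u)$; since $g'(t_c^+(u))>0$ and $g'(t_c^-(u))<0$, the first is a Morse local minimizer and the second a Morse local maximizer, as asserted. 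I expect the uniform positivity of the peak to be the main obstacle: it is not a pointwise fact for each fixed $u$ but rests on the $0$-homogeneity of $c(u)$ together with the $(C2)$ bounds packaged in Lemma \ref{extremalbounded}, which is exactly what forces $c^*<0$; once $c^*$ is in hand, the remaining arguments are the elementary monotonicity bookkeeping above.
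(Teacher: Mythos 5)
Your proof is correct and follows essentially the same route as the paper's: the paper (which writes out only part ii)(b)) likewise counts the zeros of $\varphi'_{c,u}$ and uses the definition of $c^*$ together with Lemma \ref{extremalnehari} to exclude degenerate critical points, which is exactly the mechanism packaged in your identity $g(t^*)=\alpha(c-c(u))$; your write-up simply makes the remaining cases explicit. One cosmetic remark: in ii)(a) with $c=0$ one has $\varphi_{c,u}(t)\to 0$, not $-\infty$, as $t\to 0^+$, but your sign analysis of $g$ (positive before the unique zero, negative after) already yields the global maximizer there, so nothing is lost.
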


\begin{proof} We prove only $ii) (b)$. Indeed, by the definition of $c^*$ we know that $c(u)\le c^*$ for all $u\in \mathcal{C}_A\cap\mathcal{C}_B$, which implies, by Lemma \ref{extremalnehari}, that the system $\varphi_{c,u}'(t)=\varphi''_{c,u}(t)=0$ has no solution for $c>c^*$. Now one can easily see that for any $c$ the equation $\varphi_{c,u}'(t)=0$ has at most two solutions $t_c^+(u)<t_c^-(u)$ and this happens if $c^*<c<0$. Since  $\varphi''_{c,u}$ does not vanish at $t_c^+(u)$ and $t_c^-(u)$, the proof is complete.
\end{proof}

Write $I^+=(c^*,0)$ and $I^-=(c^*,\infty)$.

\begin{proposition}\label{neharimanifolds}  Suppose $(C1)$ and $(C2)$. Then:
	\begin{enumerate}
		\item[i)] Condition $(H1)$ holds true with $I=I^-$ and $\mathcal{C}=\mathcal{C}_A\cap\mathcal{C}_B$. Moreover
		\begin{equation*}
			\mathcal{N}_c^-=\{t^-(c,u)u:\ u\in \mathcal{C}_A\cap\mathcal{C}_B\}.
		\end{equation*}
\item[ii)] Condition $(H1)$ holds true with $I=I^+$ and $\mathcal{C}=\mathcal{C}_A$. Moreover
\begin{equation*}
				\mathcal{N}_c^+=\{t^+(c,u)u:\ u\in \mathcal{C}_A\}.
			\end{equation*}
\end{enumerate}
\end{proposition}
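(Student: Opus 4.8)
The plan is to reduce both statements to the pointwise analysis of the fibering maps $\varphi_{c,u}$ already carried out in Lemma~\ref{neharisets}, and to verify separately the regularity required by $(H1)$-$(a)$. Since $(H1)$-$(b)$ is an assertion about the local extremizers of $\varphi_{c,u}$ for each fixed $(c,u)$, and Lemma~\ref{neharisets} describes exactly those extremizers on $\mathcal{C}_A$ (distinguishing the subcases $u\in\mathcal{C}_A\setminus\mathcal{C}_B$ and $u\in\mathcal{C}_A\cap\mathcal{C}_B$), the bulk of the work amounts to choosing $I$ and $\mathcal{C}$ so that precisely one extremizer of the required type (a minimizer for $ii)$, a maximizer for $i)$) survives, and then checking $(H1)$-$(a)$. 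The endpoints of $I^\pm$ and the cones are in fact dictated by Lemma~\ref{neharisets}.

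First I would establish $(H1)$-$(a)$ once for both parts, noting that in either case $\mathcal{C}\subseteq\mathcal{C}_A$. Differentiating the expression for $\varphi_{c,u}$ in $t$ gives
\begin{equation*}
	\varphi_{c,u}'(t)=\frac{\frac{(\eta-\alpha)t^{\eta-\alpha-1}}{\eta}N(u)-\frac{(\beta-\alpha)t^{\beta-\alpha-1}}{\beta}B(u)+\alpha t^{-\alpha-1}c}{\frac{1}{\alpha}A(u)}.
\end{equation*}
The numerator is a finite sum of products of functions that are $C^\infty$ in $(c,t)\in\mathbb{R}\times(0,\infty)$ with the functionals $N,B\in C^1(X)$ (the term in $c$ being linear), hence jointly $C^1$ in $(c,u,t)$; the denominator $\frac{1}{\alpha}A(u)$ is $C^1$ in $u$, independent of $(c,t)$, and strictly positive on $\mathcal{C}_A$. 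By the quotient rule the map $(c,u,t)\mapsto\varphi_{c,u}'(t)$ is $C^1$ on $I\times\mathcal{C}\times(0,\infty)$, which is $(H1)$-$(a)$.

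For part $ii)$ I would take $I=I^+=(c^*,0)$ and $\mathcal{C}=\mathcal{C}_A$, and fix $(c,u)\in I^+\times\mathcal{C}_A$. If $u\in\mathcal{C}_A\setminus\mathcal{C}_B$ then $c<0$ and Lemma~\ref{neharisets}\,$i)(b)$ yields a unique critical point of $\varphi_{c,u}$, a global minimizer of Morse type; if $u\in\mathcal{C}_A\cap\mathcal{C}_B$ then, since $c\in(c^*,0)$, Lemma~\ref{neharisets}\,$ii)(b)$ yields exactly two critical points $t_c^+(u)<t_c^-(u)$, a local minimizer and a local maximizer, both of Morse type. In both cases $\varphi_{c,u}$ has exactly one local minimizer of Morse type, so $(H1)$-$(b)$ holds with $t^+(c,u):=t_c^+(u)$, and $\mathcal{N}_c^+=\{t^+(c,u)u:u\in\mathcal{C}_A\}$ follows from the definition of $\mathcal{N}_c^\pm$. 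The restriction to $c<0$ is forced: for $c\ge 0$ Lemma~\ref{neharisets}\,$i)(a)$ and $ii)(a)$ show that $\varphi_{c,u}$ has no local minimizer, so $I$ cannot be enlarged past $0$.

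For part $i)$ I would take $I=I^-=(c^*,\infty)$ and $\mathcal{C}=\mathcal{C}_A\cap\mathcal{C}_B$, and fix $(c,u)\in I^-\times(\mathcal{C}_A\cap\mathcal{C}_B)$. If $c\ge 0$ then Lemma~\ref{neharisets}\,$ii)(a)$ gives a unique critical point, a global maximizer of Morse type; if $c\in(c^*,0)$ then Lemma~\ref{neharisets}\,$ii)(b)$ gives the two critical points above, of which exactly one, $t_c^-(u)$, is a local maximizer (of Morse type). In either case there is exactly one local maximizer, so $(H1)$-$(b)$ holds with $t^-(c,u):=t_c^-(u)$ and $\mathcal{N}_c^-=\{t^-(c,u)u:u\in\mathcal{C}_A\cap\mathcal{C}_B\}$. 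Here the cone must be restricted to $\mathcal{C}_A\cap\mathcal{C}_B$, since by Lemma~\ref{neharisets}\,$i)$ the fibering maps over $\mathcal{C}_A\setminus\mathcal{C}_B$ have no local maximizer at all. The only genuinely analytic ingredient is the regularity $(H1)$-$(a)$, which is routine; the main care required is the case split above, entirely supplied by Lemma~\ref{neharisets}, ensuring in each region that there is \emph{exactly} one extremizer of the prescribed type.
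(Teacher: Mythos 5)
Your proof is correct and follows the same route as the paper, which simply declares the proposition a straightforward consequence of Lemma~\ref{neharisets}; you have merely filled in the details the paper omits, namely the explicit $C^1$ verification of $(H1)$-$(a)$ via the quotient rule on $\mathcal{C}\subseteq\mathcal{C}_A$ (where $A>0$) and the case split over $\mathcal{C}_A\setminus\mathcal{C}_B$ versus $\mathcal{C}_A\cap\mathcal{C}_B$ that isolates exactly one extremizer of the prescribed type on each interval $I^\pm$. No gaps.
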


\begin{proof} The proof is a straightforward consequence of Lemma \ref{neharisets}.
\end{proof}

In the sequel we omit the symbols $+$ or $-$ when there is no need to differentiate both cases. Note by \eqref{Nehari} that
\begin{equation}\label{funcionalnehari}
	\Lambda(c,u)=\frac{\frac{\beta-\eta}{\eta}N(u)- \beta c}{\frac{\beta-\alpha}{\alpha} A(u)},\quad \mbox{for} \quad u\in \mathcal{N}_c.
\end{equation}
Now we will study in which circumstances condition $(H2)$ is verified. To this end, we first study the boundary of $\mathcal{N}_c^\pm$.

\begin{lemma}\label{boundary} Suppose $(C1)$ and $(C2)$. Then $\overline{\mathcal{N}}_c\subset \overline{C_A}\setminus \{0\}$ for all  $c\in I$. In particular $\mathcal{N}_c$ is away from $0$.

\end{lemma}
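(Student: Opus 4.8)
The plan is to prove the two containments in $\overline{\mathcal{N}}_c\subset\overline{\mathcal{C}_A}\setminus\{0\}$ separately. The inclusion $\overline{\mathcal{N}}_c\subset\overline{\mathcal{C}_A}$ is immediate: by the description \eqref{Nehari} one has $\mathcal{N}_c\subset\mathcal{C}_A$, and the closure of a subset is contained in the closure. Hence the entire content of the lemma is the assertion that $0\notin\overline{\mathcal{N}}_c$, i.e.\ that $\mathcal{N}_c$ is bounded away from $0$ in $X$: there exists $\rho=\rho(c)>0$ such that $\|u\|\ge\rho$ for every $u\in\mathcal{N}_c$. Note that every $u\in\mathcal{N}_c$ is automatically nonzero, since $\mathcal{N}_c\subset\mathcal{C}_A=\{A>0\}$ while $A(0)=0$ by $\alpha$-homogeneity.

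The single tool I would use is the Nehari identity recorded in \eqref{Nehari}, namely
\[
\frac{\eta-\alpha}{\eta}\,N(u)-\frac{\beta-\alpha}{\beta}\,B(u)=-\alpha c,\qquad u\in\mathcal{N}_c,
\]
combined with the growth bounds in $(C2)$. First I would dispose of the case $c\neq 0$ by a soft contradiction argument. Suppose $(u_n)\subset\mathcal{N}_c$ satisfied $u_n\to 0$. Then $(C2)$ gives $0\le N(u_n)\le C'\|u_n\|^\eta\to 0$ and $|B(u_n)|\le C\|u_n\|^\beta\to 0$, so the left-hand side of the identity above tends to $0$, while its right-hand side equals the fixed nonzero constant $-\alpha c$. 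This contradiction shows $\mathcal{N}_c$ is bounded away from $0$ whenever $c\neq 0$.

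The remaining case is $c=0$, which can only occur when $I=I^-=(c^*,\infty)$, so that $\mathcal{C}=\mathcal{C}_A\cap\mathcal{C}_B$ and in particular $B(u)>0$ for every $u\in\mathcal{N}_c$. Here the previous argument degenerates (the right-hand side vanishes), and I would argue quantitatively instead. At $c=0$ the Nehari identity reads $\frac{\eta-\alpha}{\eta}N(u)=\frac{\beta-\alpha}{\beta}B(u)$; inserting $N(u)\ge C^{-1}\|u\|^\eta$ and $B(u)\le C\|u\|^\beta$ from $(C2)$ yields
\[
\frac{\eta-\alpha}{\eta}\,C^{-1}\|u\|^\eta\;\le\;\frac{\beta-\alpha}{\beta}\,C\|u\|^\beta,
\]
and dividing by $\|u\|^\eta>0$ and using $\beta>\eta$ produces the explicit lower bound $\|u\|^{\beta-\eta}\ge \frac{(\eta-\alpha)\beta}{\eta(\beta-\alpha)C^2}>0$. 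Together with the previous case this supplies the desired $\rho$, whence $0\notin\overline{\mathcal{N}}_c$ and the lemma follows. The only delicate point—the main obstacle—is precisely this borderline level $c=0$: the nonvanishing right-hand side that drives the contradiction for $c\neq 0$ disappears, and one must fall back on the homogeneity gap $\beta>\eta$ together with the sign condition $B(u)>0$ guaranteed by the cone $\mathcal{C}_A\cap\mathcal{C}_B$ in the relevant range $I=I^-$.
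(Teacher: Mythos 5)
Your proof is correct and follows essentially the same route as the paper: the inclusion $\overline{\mathcal{N}}_c\subset\overline{\mathcal{C}_A}$ is immediate, the case $c\neq 0$ is handled via the Nehari identity \eqref{Nehari} together with $(C2)$, and the borderline case $c=0$ (only relevant for $I=I^-$, where $B>0$ on $\mathcal{N}_c^-$) is settled by the quantitative bound $\|u\|^{\beta-\eta}\ge C>0$. The only cosmetic difference is that the paper invokes the inequality \eqref{N-bounded} imported from \cite[Lemma 5.4]{MR4736027} at $c=0$, whereas you derive the same lower bound directly from the $c=0$ Nehari identity and $(C2)$; the two computations are equivalent.
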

\begin{proof} By Proposition \ref{neharimanifolds} we have that $\overline{\mathcal{N}_c^+}\subset \overline{C_A}$ and $\overline{\mathcal{N}_c^-}\subset \overline{C_A\cap C_B} \subset \overline{C_A}$, so it is remains to show that $\mathcal{N}_c$ is away from zero. If $c\neq 0$, then this is clear from \eqref{Nehari}, while if $c=0$ we can use inequality (5.5) in  \cite[Lemma 5.4]{MR4736027} which clearly is true in our case assuming that $B(u)>0$ (in fact it does holds for all $c>c^*$), that is,
	\begin{equation}\label{N-bounded}
	1> \left(\frac{\eta-\alpha}{\beta-\alpha}\frac{N(u)}{B(u)}\right)^{\frac{1}{\beta-\eta}}\ge \frac{C}{\|u\|},\ \forall u\in \mathcal{N}_c^-,\ c>c^*,
	\end{equation}
where, in the second inequality we have used $(C2)$. 
\end{proof}

\begin{remark}\label{rmkcomplete} Under conditions $(C1)$, $(C2)$ and $(C4)$, the functional $A$ is away from zero on any bounded subset of $\mathcal{N}_c^-$. Indeed, if there exists a bounded sequence $(u_n) \subset \mathcal{N}_c^-$ such that $A(u_n) \to 0$ then $(C4)$ yields that $B(u_n) \to 0$. However, this is in contradiction with \eqref{N-bounded}, which proves the claim.
	As a consequence, the manifold $\mathcal{N}_c^-$ is complete for any $c>c^*$. Indeed, we already know from Lemma \ref{boundary} that $\overline{\mathcal{N}_c^-}\subset \overline{C_A}\setminus \{0\}$. If $\overline{\mathcal{N}_c^-} \neq \mathcal{N}_c^-$ then there exists a sequence $(u_n) \subset \mathcal{N}_c^-$ such that $u_n \to u \not \in \mathcal{N}_c^-$. Thus $(u_n) \subset \mathcal{C}_A$ and $\phi_{c,u_n}'(1)=0>\phi_{c,u_n}''(1)$ for every $n$. By continuity we deduce that $\phi_{c,u}'(1)=0 \ge \phi_{c,u}''(1)$, and since $u \not \in \mathcal{N}_c^-$ we must have either $u \not \in \mathcal{C}_A$ or $\phi_{c,u}''(1)=0$. Since  $A$ is away from zero on any bounded subset of $\mathcal{N}_c^-$ the first possibility is ruled out, so $\phi_{c,u}''(1)=0$. However, this is impossible since $c>c^*$. Thus we reach a contradiction and we conclude that $\overline{\mathcal{N}}_c=\mathcal{N}_c$.
\end{remark}

Next we prove that $\Lambda(c,\cdot)$ is coercive, that is, if $(u_n)\subset \mathcal{N}_c$ approaches $\partial \mathcal{N}_c$ or is unbounded, then
$\Lambda(c,u_n)\to \infty$. Thanks to Lemma \ref{boundary} we need to understand the behavior of $A(u)$ near the boundary of $\mathcal{N}_c$.

\begin{lemma}\label{topolopro} Suppose $(C1)$ and $(C2)$. Then:
	\begin{enumerate}
		\item[i)] For any $c\in I^+$ the set $\mathcal{N}_c^+$ is bounded and the functional $u \mapsto \lambda(c,u)$ is positive and bounded away from zero on $\mathcal{N}_c^+$. Furthermore  $\Lambda^+(c,u)\to \infty$ if  $A(u) \to 0$.
		\item[ii)] There exists $c^{**}>0$ such that for any $c\in (c^*,c^{**})$
		 the functional $u \mapsto \lambda(c,u)$ is positive, bounded away from zero, and coercive on $\mathcal{N}_c^-$, that is, $\Lambda^-(c,u)\to \infty$ if $u \in \mathcal{N}_c^-$ and either $\|u\|\to \infty$ or $A(u)\to 0$. 
		\item[iii)] If we assume, in addition, $(C4)$ then $u \mapsto \lambda(c,u)$ is  coercive on $\mathcal{N}_c^-$ for any $c>c^*$.
	\end{enumerate}
\end{lemma}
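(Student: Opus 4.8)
\emph{Preliminary reduction.} The lever for all three parts is an explicit formula for $\varphi''_{c,u}(1)$ on the Nehari set. The plan is to differentiate the expression for $\varphi_{c,u}$ twice, evaluate at $t=1$, and use the Nehari identity \eqref{Nehari} to eliminate $c$; a short computation then yields
\begin{equation*}
\varphi''_{c,u}(1)=\frac{\alpha}{A(u)}\bigl[(\eta-\alpha)N(u)-(\beta-\alpha)B(u)\bigr].
\end{equation*}
Since $A(u)>0$ on $\mathcal{C}_A$, this identifies the two Nehari components by the sign of $(\eta-\alpha)N(u)-(\beta-\alpha)B(u)$: on $\mathcal{N}_c^+$ one has $(\eta-\alpha)N(u)>(\beta-\alpha)B(u)$, and on $\mathcal{N}_c^-$ the reverse strict inequality (so in particular $B(u)>0$ there). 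These pointwise inequalities, the Nehari identity, and the formula \eqref{funcionalnehari} for $\Lambda$ are essentially all I will use.

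\emph{Part (i).} Fix $c\in I^+=(c^*,0)$ and $u\in\mathcal{N}_c^+$. First I would solve \eqref{Nehari} for $(\beta-\alpha)B(u)$ and insert it into $(\eta-\alpha)N(u)>(\beta-\alpha)B(u)$; because $\beta>\eta$ this collapses to an explicit upper bound of the form $N(u)<\alpha\beta\eta|c|/[(\eta-\alpha)(\beta-\eta)]$, whence $(C2)$ bounds $\|u\|$ and $\mathcal{N}_c^+$ is bounded. The remaining claims follow from \eqref{funcionalnehari}: since $c<0$ and $N(u)>0$, the numerator $\tfrac{\beta-\eta}{\eta}N(u)-\beta c$ stays $\ge\beta|c|>0$, while the denominator $\tfrac{\beta-\alpha}{\alpha}A(u)$ is bounded above on the bounded set $\mathcal{N}_c^+$ by $(C2)$. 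This gives positivity and a uniform positive lower bound, and letting $A(u)\to0$ sends the denominator to $0$ with the numerator still $\ge\beta|c|$, so $\Lambda^+(c,u)\to\infty$.

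\emph{Part (ii).} The uniform ingredient is \eqref{N-bounded}, valid on $\mathcal{N}_c^-$ for every $c>c^*$ with a constant depending only on $(C2)$: it forces $\|u\|\ge C_0$, hence $N(u)\ge N_0>0$, with $C_0,N_0$ \emph{independent of $c$}. I would then define $c^{**}:=(\beta-\eta)N_0/(\beta\eta)>0$, so that for $c<c^{**}$ the numerator in \eqref{funcionalnehari} satisfies $\tfrac{\beta-\eta}{\eta}N(u)-\beta c\ge\tfrac{\beta-\eta}{\eta}N_0-\beta c>0$, giving positivity. For coercivity I split into two regimes: if $\|u\|\to\infty$ the numerator is bounded below by a multiple of $\|u\|^\eta$ while the denominator is at most a multiple of $\|u\|^\alpha$, so the quotient is bounded below by a multiple of $\|u\|^{\eta-\alpha}\to\infty$ (recall $\eta>\alpha$); if instead $A(u)\to0$ with $\|u\|$ bounded, the numerator remains $\ge$ the positive constant above while the denominator tends to $0$, again forcing $\Lambda^-(c,u)\to\infty$. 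Finally, boundedness away from zero follows: a sequence with $\Lambda^-(c,u_n)\to0$ could, by the two regimes just treated, have neither $\|u_n\|\to\infty$ nor $A(u_n)\to0$, so it would stay in a region where the quotient is bounded below by a positive constant, a contradiction.

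\emph{Part (iii) and the main difficulty.} Assuming $(C4)$, Remark \ref{rmkcomplete} states that $A$ is bounded away from zero on every bounded subset of $\mathcal{N}_c^-$. Hence any sequence in $\mathcal{N}_c^-$ with $A(u_n)\to0$ must satisfy $\|u_n\|\to\infty$ (a bounded subsequence would contradict the remark). The problematic regime ``$A(u)\to0$ with $\|u\|$ bounded,'' which in (ii) forced the restriction $c<c^{**}$ to keep the numerator positive, therefore cannot occur, and coercivity reduces entirely to the $\|u\|\to\infty$ estimate from (ii)---which needs no sign restriction and holds for every $c>c^*$. I expect the main obstacle to be the bookkeeping in (ii): verifying that the lower bound $N_0$ extracted from \eqref{N-bounded} is genuinely uniform in $c>c^*$ (so that $c^{**}$ is well defined and strictly positive), and recognizing that the only delicate boundary behaviour is $A(u)\to0$ at bounded norm, whose control is bought precisely by $c<c^{**}$ in (ii) and by $(C4)$ in (iii).
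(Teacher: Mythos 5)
Your argument is correct, and all three parts go through; the difference from the paper lies mainly in how $c^{**}$ is produced and how positivity is extracted. For part (i) you derive the bound $N(u)<\alpha\beta\eta|c|/[(\eta-\alpha)(\beta-\eta)]$ on $\mathcal{N}_c^+$ directly from the Nehari identity and the sign of $\varphi''_{c,u}(1)$ (your formula $\varphi''_{c,u}(1)=\tfrac{\alpha}{A(u)}[(\eta-\alpha)N(u)-(\beta-\alpha)B(u)]$ checks out), then read off positivity from the numerator $-\beta c>0$ in \eqref{funcionalnehari}; the paper instead quotes the equivalent estimate \eqref{ine+} on $t^+(c,u)$ from \cite{MR4736027} and the bound $\lambda(c,t^+(c,u)u)\ge -C_1ct^+(c,u)^{-\alpha}$, so your route is more self-contained but lands on the same inequalities. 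For part (ii) the divergence is real: you set $c^{**}:=(\beta-\eta)N_0/(\beta\eta)$, where $N_0$ is the uniform lower bound on $N$ over $\mathcal{N}_c^-$ extracted from \eqref{N-bounded} (which is indeed uniform in $c>c^*$, since the constant there comes only from $(C2)$), and then positivity is an immediate numerator estimate in \eqref{funcionalnehari}. The paper instead defines $c^{**}=\inf_{\mathcal{C}_A\cap\mathcal{C}_B}c_0(u)$ via the system $\varphi_{c,u}(t)=\varphi'_{c,u}(t)=0$ and proves $\Lambda^-(c,u)\ge\varphi_{c,u}(t_0(u))=\alpha(c_0(u)-c)/A(t_0(u)u)\ge C/A(u)$. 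Both choices satisfy the lemma as stated, and your construction is arguably more elementary; what the paper's choice buys is sharpness: its $c^{**}$ is exactly the level at which $\inf\Lambda^-(c,\cdot)$ reaches zero (Proposition \ref{c**attained}), and that specific value is reused in Section \ref{Sfurther}, so your (generally smaller) constant would not substitute for it there. Part (iii) coincides with the paper's argument via Remark \ref{rmkcomplete} and \eqref{eqco}.
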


\begin{proof}  \strut
	\begin{enumerate}
\item[i)] 	We can proceed as in \cite[Lemma 5.4]{MR4736027} to show that
	\begin{equation}\label{ine+}
	t^+(c,u)<\left(-\frac{\alpha\beta\eta c}{(\beta-\eta)(\eta-\alpha)}\frac{1}{N(u)}\right)^{\frac{1}{\eta}},\ \forall c\in(c^*,0),\ u\in S_{\mathcal{C}_A},
	\end{equation}
	which yields the boundedness of $\mathcal{N}_c^+$. In addition, one can show that
	$\lambda(c,t^+(c,u)u)\ge -C_1 c t^+(c,u)^{-\alpha}$ for some $C_1>0$ and any $c\in(c^*,0)$ and $u\in S_{\mathcal{C}_A}$. Thus \eqref{ine+} implies that $\lambda(c,u) \ge C>0$ for any $c\in(c^*,0)$ and $u \in \mathcal{N}_c^+$. 
	Lastly, if $u \in \mathcal{N}_c^+$ and $A(u)\to 0$ then \eqref{funcionalnehari} yields $\Lambda(c,u)\to \infty$.
	\item [ii)] First of all we note that \eqref{funcionalnehari} and $(C2)$ yield 
	\begin{equation}\label{eqco}
	\Lambda^-(c,u) \ge C \frac{\frac{\beta-\eta}{\eta}\|u\|^{\eta}- \beta c}{\frac{\beta-\alpha}{\alpha} \|u\|^{\alpha}} \to \infty, \quad \mbox{if } u \in \mathcal{N}_c^- \mbox{ and } \|u\|\to \infty.
	\end{equation}
	Let us now show the existence of $c^{**}$.
	Fix $u\in \mathcal{C}_A\cap \mathcal{C}_B$ and consider the system $\varphi_{c,u}(t)=\varphi'_{c,u}(t)=0$. As in Lemma \ref{extremalnehari} one can show that this system has a unique solution $(t,c)\in((0,\infty),(0,\infty))$, given by
	\begin{equation*}
	t:=	t_0(u) =\left( \frac{N(u)}{B(u)} \right)^{\frac{1}{\beta - \eta}},
	\quad \mbox{and} \quad
	c:=	c_0(u) =	 \frac{\beta-\eta}{\eta\beta}\frac{N(u)^{\frac{\beta}{\beta - \eta}}}{B(u)^{\frac{\eta}{\beta-\eta}}}.
	\end{equation*}
	We set $c^{**}:=\displaystyle \inf_{u\in \mathcal{C}_A\cap\mathcal{C}_B} c_0(u)$ and observe by $(C2)$  that  $c^{**}>0$. We claim that 
	for any $c\in (c^*,c^{**})$ there exists a constant $C>0$ such that
	\begin{equation}\label{inec0} \Lambda^-(c,u)\ge \frac{C}{A(u)}>0, \quad \forall u \in \mathcal{N}_c^-,\end{equation}
	which combined with \eqref{eqco} yields the desired conclusion.
	Indeed, 
	 by Lemma \ref{boundary} we know that $\mathcal{N}_c^-$ is away from $0$, so for any $c\le 0$ there exists $C>0$ such that $$\frac{\beta-\eta}{\eta}N(u)- \beta c\geq C, \quad \forall u \in \mathcal{N}_c^-,$$
	 and \eqref{funcionalnehari} implies \eqref{inec0}.
	   Let now $c\in(0,c^{**})$. By Lemma \ref{neharisets} we know that $	\Lambda^-(c,u)\ge \varphi_{c,u}(t) $ for any $t>0$. On the other hand, since $\varphi_{c_0(u),u}(t_0(u))=0$ yields $$c_0(u)=\frac{t_0(u)^{\eta}}{\eta}N(u)-\frac{t_0(u)^{\beta}}{\beta} B(u),$$ we infer that
$$
	\Lambda^-(c,u)\ge \varphi_{c,u}(t_0(u)) =\alpha \frac{c_0(u)-c}{A(t_0(u)u)} \ge \alpha \frac{c^{**}-c}{t_0(u)^{\alpha}A(u)}, \quad \forall u \in \mathcal{N}_c^-.
$$
	By \eqref{N-bounded} we know that $u \mapsto t_0(u)$ is bounded on $\mathcal{N}_c^-$, which yields \eqref{inec0}, 
	
	\item[iii)] If $(C4)$ holds then $A$ is away from zero in any bounded subset of $\mathcal{N}_c^-$ (see Remark \ref{rmkcomplete}), so \eqref{eqco} yields the conclusion.
\end{enumerate}
\end{proof}

\begin{remark}
Note that the values $t_0(u),c_0(u)$ in the proof of of Lemma \ref{topolopro} - ii) satisfy $t_0(u)=t^-(c_0(u),u)$ for any $u\in \mathcal{C}_A\cap\mathcal{C}_B$.\\
\end{remark}

Under conditions $(C1)$ and $(C2)$ we set
\begin{equation*}
	J^+:=I^+=(c^*,0)\ \mbox{and}\ J^-:=(c^*,c^{**}).
\end{equation*}
If we assume, in addition, $(C4)$, then we set $J^-:=(c^*,\infty)$.

The previous results contain, in particular, the next one:

\begin{lemma}\label{coerciveboundary} Suppose $(C1)$ and $(C2)$ or $(C1)$, $(C2)$ and $(C4)$. Take $c\in J$. If $(u_n)\subset \mathcal{N}_c$ satisfies either $A(u_n)\to 0$ or $\|u_n\|\to \infty$, then $\Lambda(c,u_n)\to \infty$.
\end{lemma}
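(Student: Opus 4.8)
The plan is to treat this statement as an immediate corollary of Lemma~\ref{topolopro}, reading the unsigned notation $\mathcal{N}_c$ (and the corresponding interval $J$) as shorthand for either $\mathcal{N}_c^+$ with $c\in J^+=(c^*,0)$ or $\mathcal{N}_c^-$ with $c\in J^-$, and simply assembling the appropriate conclusions of that lemma in each of the two cases. No new estimate is really needed; the content is a bookkeeping of the three parts of Lemma~\ref{topolopro} together with one consequence of Remark~\ref{rmkcomplete}.

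First I would dispose of the case $\mathcal{N}_c=\mathcal{N}_c^+$, $c\in J^+$. By Lemma~\ref{topolopro}-i) the set $\mathcal{N}_c^+$ is bounded, so no sequence contained in it can satisfy $\|u_n\|\to\infty$; that branch of the hypothesis is vacuous. The remaining possibility $A(u_n)\to 0$ is covered verbatim by the last assertion of Lemma~\ref{topolopro}-i), which gives $\Lambda^+(c,u_n)\to\infty$. Next, for $\mathcal{N}_c=\mathcal{N}_c^-$ under only $(C1)$ and $(C2)$, with $c\in J^-=(c^*,c^{**})$, both branches of the hypothesis are already contained in Lemma~\ref{topolopro}-ii), so there is nothing further to prove.

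The only point requiring a short additional argument, and the one I expect to be the sole obstacle, is the case $\mathcal{N}_c=\mathcal{N}_c^-$ under $(C1)$, $(C2)$ and $(C4)$, with $c\in J^-=(c^*,\infty)$. Here Lemma~\ref{topolopro}-iii) supplies coercivity in the norm, i.e. $\Lambda^-(c,u_n)\to\infty$ whenever $\|u_n\|\to\infty$, so the branch $\|u_n\|\to\infty$ is immediate; but it does not directly address the branch $A(u_n)\to 0$, since coercivity controls only $\|u_n\|$. The bridge is Remark~\ref{rmkcomplete}: under $(C4)$ the functional $A$ is bounded away from zero on every bounded subset of $\mathcal{N}_c^-$. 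Consequently, any subsequence of $(u_n)$ still satisfies $A\to 0$ and hence cannot be bounded; since no subsequence of $(u_n)$ is bounded, we conclude $\|u_n\|\to\infty$, and coercivity then yields $\Lambda^-(c,u_n)\to\infty$. This completes the case analysis and the proof.
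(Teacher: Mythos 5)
Your proposal is correct and follows essentially the same route as the paper, which presents this lemma as an immediate consequence of Lemma~\ref{topolopro} (``The previous results contain, in particular, the next one''). Your explicit bookkeeping of the three cases --- including the observation that boundedness of $\mathcal{N}_c^+$ makes the $\|u_n\|\to\infty$ branch vacuous there, and the subsequence argument via Remark~\ref{rmkcomplete} reducing $A(u_n)\to 0$ to $\|u_n\|\to\infty$ under $(C4)$ --- matches exactly what the paper's proof of Lemma~\ref{topolopro}-iii) relies on.
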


We are now in position to verify condition $(H2)$:

\begin{proposition}\label{H2}  Suppose $(C1)$-$(C3)$ or $(C1)$-$(C4)$. Then
	\begin{enumerate}
		\item[i)]  Condition $(H2)$ holds true with $I=J^-$ and $\mathcal{C}=\mathcal{C}_A\cap\mathcal{C}_B$. 
		\item[ii)] Condition $(H2)$ holds true with $I=J^+$ and $\mathcal{C}=\mathcal{C}_A$. 
	\end{enumerate}
\end{proposition}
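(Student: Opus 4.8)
The plan is to prove items (i) and (ii) by a single uniform argument, writing in case (i) $\mathcal{C}=\mathcal{C}_A\cap\mathcal{C}_B$, $I=J^-$, $\mathcal{N}_c=\mathcal{N}_c^-$, and in case (ii) $\mathcal{C}=\mathcal{C}_A$, $I=J^+$, $\mathcal{N}_c=\mathcal{N}_c^+$. Throughout I would exploit the diffeomorphism $m:\mathcal{S}_\mathcal{C}\to\mathcal{N}_c$, $m(u)=t(c,u)u$, together with the identity $\widetilde{\Lambda}(c,u)=\lambda(c,m(u))$. Fix $c\in I$. Conditions $(H2)$-$(a)$ and $(H2)$-$(c)$ will follow immediately from the results already established. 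Indeed, by Lemma \ref{topolopro} the map $u\mapsto\lambda(c,u)$ is bounded away from zero on $\mathcal{N}_c$, so $\widetilde{\Lambda}(c,\cdot)=\lambda(c,m(\cdot))\ge C>0$ on $\mathcal{S}_\mathcal{C}$, which is $(a)$.

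For $(c)$, let $(u_n)\subset\mathcal{S}_\mathcal{C}$ satisfy $I_2(u_n)=\tfrac1\alpha A(u_n)\to0$, and put $v_n=m(u_n)$, so that $A(v_n)=t(c,u_n)^\alpha A(u_n)$. Along any subsequence, either $(v_n)$ is bounded — whence $t(c,u_n)=\|v_n\|$ is bounded and $A(v_n)\to0$ — or $\|v_n\|\to\infty$. In both alternatives Lemma \ref{coerciveboundary} gives $\widetilde{\Lambda}(c,u_n)=\Lambda(c,v_n)\to\infty$, proving $(c)$. (In case (ii) the boundedness of $\mathcal{N}_c^+$ from Lemma \ref{topolopro}-$i)$ makes only the first alternative occur.)

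The core of the proof is $(H2)$-$(b)$, and the decisive ingredient is the natural-constraint identity. Let $(u_n)\subset\mathcal{S}_\mathcal{C}$ be a Palais--Smale sequence at level $\lambda^*:=\lambda_{c,k}$, i.e. $\widetilde{\Lambda}(c,u_n)\to\lambda^*$ and $\widetilde{\Lambda}'(c,u_n)\to0$ in the cotangent bundle of $\mathcal{S}_\mathcal{C}$; set $v_n=m(u_n)\in\mathcal{N}_c$ and $\lambda_n:=\lambda(c,v_n)=\widetilde{\Lambda}(c,u_n)\to\lambda^*$. Differentiating $\widetilde{\Lambda}(c,u)=\lambda(c,t(c,u)u)$ and using that $v_n=t(c,u_n)u_n$ is a critical point of the fibering map $t\mapsto\lambda(c,tu_n)$, so that $\tfrac{\partial\lambda}{\partial u}(c,v_n)$ annihilates $u_n$, the radial term drops out and one obtains, for every $w\in T_{u_n}\mathcal{S}_\mathcal{C}$,
\begin{equation*}
\langle\widetilde{\Lambda}'(c,u_n),w\rangle=t(c,u_n)\,\Big\langle\tfrac{\partial\lambda}{\partial u}(c,v_n),w\Big\rangle .
\end{equation*}
Since $t(c,u_n)=\|v_n\|$ is bounded away from $0$ by Lemma \ref{boundary}, and since $\tfrac{\partial\lambda}{\partial u}(c,v_n)$ already vanishes on $\mathbb{R}\,u_n$ while being asymptotically small on $T_{u_n}\mathcal{S}_\mathcal{C}$ (with uniformly bounded projections onto the tangent space, as $X$ is uniformly convex with $C^1$ norm), the two contributions combine to give $\tfrac{\partial\lambda}{\partial u}(c,v_n)\to0$ in $X^*$. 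By \eqref{derivatives} this means $\Phi'_{\lambda_n}(v_n)=I_2(v_n)\,\tfrac{\partial\lambda}{\partial u}(c,v_n)$.

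It then remains to extract convergence. From $\Lambda(c,v_n)=\lambda_n\to\lambda^*<\infty$ and the coercivity of Lemma \ref{coerciveboundary}, the sequence $(v_n)$ is bounded and $A(v_n)$ is bounded away from $0$; hence $I_2(v_n)=\tfrac1\alpha A(v_n)$ is bounded and the displayed identity yields $\Phi'_{\lambda_n}(v_n)\to0$. Moreover $\Phi_{\lambda_n}(v_n)=c$ for all $n$ by the very definition of $\lambda(c,\cdot)$. Thus $(\lambda_n)$, $(v_n)$ and $(\Phi_{\lambda_n}(v_n))$ are bounded with $\Phi'_{\lambda_n}(v_n)\to0$, so $(C3)$ furnishes a subsequence $v_n\to v$. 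Since $\mathcal{N}_c$ is away from $0$ (Lemma \ref{boundary}) and $A(v_n)$ is bounded below, $v\in\mathcal{C}_A$; in case (i) the inequality $B(u)>\frac{\eta-\alpha}{\beta-\alpha}N(u)$ read off from \eqref{N-bounded}, together with $\|v_n\|$ bounded below, forces $B(v)>0$, so $v\in\mathcal{C}$. Consequently $u_n=v_n/\|v_n\|\to v/\|v\|\in\mathcal{S}_\mathcal{C}$, which is the desired Palais--Smale condition. I expect the main obstacle to be the rigorous justification of the natural-constraint identity, and specifically checking that smallness of $\widetilde{\Lambda}'$ on the sphere, combined with the exact vanishing of $\tfrac{\partial\lambda}{\partial u}(c,v_n)$ along the radial direction $u_n$, upgrades uniformly in $n$ to smallness of $\tfrac{\partial\lambda}{\partial u}(c,v_n)$ in the full dual $X^*$.
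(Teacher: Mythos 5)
Your proposal is correct and follows essentially the same route as the paper: item $(a)$ from Lemma \ref{topolopro}, item $(b)$ by using the natural-constraint property to pass from a Palais--Smale sequence of the restricted functional to one of $\lambda(c,\cdot)$, then invoking \eqref{derivatives}, the bounds on $A$ and $\|\cdot\|$ from Lemmas \ref{boundary} and \ref{topolopro}, and condition $(C3)$, and item $(c)$ via the dichotomy on $t(c,u_n)$ together with Lemma \ref{coerciveboundary}. You are merely more explicit than the paper about transferring between $\mathcal{S}_\mathcal{C}$ and $\mathcal{N}_c$ and about checking that the limit stays in $\mathcal{C}$; the one point you flag as delicate (upgrading smallness of $\widetilde{\Lambda}'$ on the sphere to smallness of $\partial\lambda/\partial u$ in $X^*$) is exactly the step the paper also asserts without detail.
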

\begin{proof} By Lemma \ref{topolopro} we know that $\Lambda(c,u)$  is bounded from below on $\mathcal{N}_c$, so $\widetilde{\Lambda}(c,u)$ is bounded from below in $\mathcal{S}_\mathcal{C}$, i.e. $(H2)$-$(a)$ is proved.  Now suppose that $(u_n)\subset \mathcal{N}_c$ is a Palais-Smale sequence of $\Lambda(c,\cdot)$. Since
	\begin{equation*}
			\frac{\partial \lambda}{\partial u}(c,u_n)u_n=0, \forall n\in \mathbb{N},
	\end{equation*}
it follows that $(u_n)$ is a Palais--Smale sequence of $\lambda(c,\cdot)$. Note by \eqref{derivatives} that
\begin{equation*}
	\frac{\partial \lambda}{\partial u}(c,u_n)=\frac{\Phi'_{\lambda(c,u_n)}(u_n)}{A(u_n)}.
	\end{equation*}
From Lemma \ref{topolopro} we know that $(A(u_n))$ is away from $0$, and $(u_n)$ is bounded. It follows that $(\lambda(c,u_n))$ is bounded,so by condition $(C3)$ we can assume that $u_n\to u$ with $A(u)>0$. By Lemma \ref{boundary}, the functional $\Lambda(c,\cdot)$ satisfies the Palais--Smale condition, which implies $(H2)$-$(b)$. Finally, $(H2)$-$(c)$ follows from Lemma \ref{coerciveboundary}. Indeed, recall that $\widetilde{\Lambda}(c,u)=\Lambda(c,t(c,u)u)$. Thus, if $A(u_n) \to 0$ then we have two possibilities: $t(c,u_n)\to \infty$, or $(t(c,u_n))$ is bounded. The first one implies that $\|t(c,u_n)u_n\|=t(c,u_n)\to \infty$, while the second one implies that $A(t(c,u_n)u_n)\to 0$, so by Lemma \ref{coerciveboundary} we conclude in both cases that $\widetilde{\Lambda}(c,u_n)\to \infty$.
\end{proof}

Summing up, from Propositions \ref{neharimanifolds} and \ref{H2} we obtain the following result:
\begin{proposition}\label{H12} Suppose $(C1)$-$(C3)$ or $(C1)$-$(C4)$. Then $(H1)$ and $(H2)$ hold true with $I=J^+$ and $\mathcal{C}=\mathcal{C}_A$ or $I=J^-$ and $\mathcal{C}=\mathcal{C}_A\cap\mathcal{C}_B$.
\end{proposition}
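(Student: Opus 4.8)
The plan is to read off Proposition \ref{H12} as a direct synthesis of the two preceding results: Proposition \ref{neharimanifolds}, which supplies condition $(H1)$, and Proposition \ref{H2}, which supplies condition $(H2)$. The only work is to check that the parameter intervals on which these two propositions deliver their conclusions are compatible. First I would recall the notation fixed immediately before the statement: $J^+=I^+=(c^*,0)$, whereas $J^-=(c^*,c^{**})$ under $(C1)$-$(C3)$ and $J^-=(c^*,\infty)$ under $(C1)$-$(C4)$; recall also that $I^-=(c^*,\infty)$.

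For the first alternative, $I=J^+$ and $\mathcal{C}=\mathcal{C}_A$, the two ingredients are available on exactly this interval. Proposition \ref{neharimanifolds}(ii) gives $(H1)$ with $I=I^+=J^+$ and $\mathcal{C}=\mathcal{C}_A$, while Proposition \ref{H2}(ii) gives $(H2)$ with the same $I=J^+$ and $\mathcal{C}=\mathcal{C}_A$. Hence $(H1)$ and $(H2)$ hold simultaneously on $J^+$, as claimed.

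For the second alternative, $I=J^-$ and $\mathcal{C}=\mathcal{C}_A\cap\mathcal{C}_B$, I would first note the inclusion $J^-\subseteq I^-$, which holds both when $J^-=(c^*,c^{**})$ (since $c^{**}<\infty$) and trivially when $J^-=(c^*,\infty)=I^-$. Proposition \ref{neharimanifolds}(i) establishes $(H1)$ with $I=I^-$ and $\mathcal{C}=\mathcal{C}_A\cap\mathcal{C}_B$; since the defining clauses of $(H1)$ are universally quantified over $(c,u)\in I\times\mathcal{C}$, they persist verbatim upon restricting to the subinterval $J^-$. Proposition \ref{H2}(i) then furnishes $(H2)$ directly with $I=J^-$ and $\mathcal{C}=\mathcal{C}_A\cap\mathcal{C}_B$, completing this case.

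Since the statement is essentially bookkeeping, I do not expect a genuine obstacle; the single point deserving attention is precisely this interval matching. Condition $(H2)$ forces the admissible energy range down to $J^-$ — this is exactly the role of the threshold $c^{**}$ produced in Lemma \ref{topolopro}(ii), which is needed to keep $\lambda(c,\cdot)$ positive and coercive on $\mathcal{N}_c^-$ — whereas $(H1)$ was already valid on the larger interval $I^-$ and therefore transfers to $J^-$ without any additional argument.
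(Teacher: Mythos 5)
Your proposal is correct and coincides with the paper's own treatment, which simply states that Proposition~\ref{H12} follows by combining Propositions~\ref{neharimanifolds} and~\ref{H2}. Your extra remark that $(H1)$, being established on the larger interval $I^-$, restricts harmlessly to $J^-\subseteq I^-$ is the only non-trivial bookkeeping point, and you handle it correctly.
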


\subsection{Behavior of the energy curves} \label{curvesbeha}
All over this subsection we assume conditions $(C1)$-$(C3)$. We set
\begin{equation*}
	\lambda_{c,k}^+=\inf_{M \in \F_{k}}\, \sup_{u \in M}\, \widetilde{\Lambda}^+(c,u),\quad \mbox{for} \quad c\in J^+, \quad \mbox{and} \quad 1 \le k \le \gamma(S_{\mathcal{C}_A}),
\end{equation*}
 and
\begin{equation*}
	\lambda_{c,k}^-=\inf_{M \in \F_{k}}\, \sup_{u \in M}\, \widetilde{\Lambda}^-(c,u),,\quad \mbox{for} \quad \ c\in J^-, \quad \mbox{and} \quad 1 \le k \le \gamma(S_{\mathcal{C}_A \cap \mathcal{C}_B}).
\end{equation*}
Recall that $$\F_{k} = \bgset{M \in \F : M\ \mbox{is compact and}\  \gamma(M) \ge k},$$
where the $\F$ is the class of closed and symmetric subsets of $\mathcal{S}_\mathcal{C}=\mathcal{S} \cap \mathcal{C}$, with
$\mathcal{C}:=\mathcal{C}^+:=\mathcal{C}_A$ in the first case, and
$\mathcal{C}:=\mathcal{C}^-:=\mathcal{C}_A\cap \mathcal{C}_B$ in the second case.

In this section, we will study the curves $C_k=\{(\lambda_{c,k},c):\ c\in J\}$. We recall that the symbols $+$, $-$, will be dropped when there is no need to differentiate both cases. The ideas behind the proofs here comes from \cite{MR4736027}. For each $\lambda>0$ we denote $L_\lambda=\{(\lambda,c):\ c\in \mathbb{R}\}$. Our goal is to prove the following results:
\begin{theorem}\label{thmcurves+} Let $1 \le k \le \gamma(S_{\mathcal{C}_A})$. Then:
	\begin{enumerate}
		\item[i)] The map $c \mapsto \lambda_{c,k}^+$, is continuous and decreasing in $J^+$.
		\item[ii)] $\displaystyle \lim_{c\to 0^-}\lambda_{c,k}^+=0$.
		\item [iii)] If $\gamma(S_{\mathcal{C}_A})=\infty$, then for all $\lambda>0$ there exist two sequences $(k_n)\subset \mathbb{N}$ and $(c_n)\subset (c^*,0)$ such that $k_n\to \infty$, $c_n\to 0^-$ and $\lambda=\lambda_{c_n,k_n}^+$ for every $n$. Moreover, if $v_n\in S_{\mathcal{C}_A}$ satisfies $\widetilde{\Lambda}^+(c_n,v_n)=\lambda$, then $\|t^+(c_n,v_n)v_n\|=t^+(c_n,v_n)\to 0$.
	\end{enumerate}
\end{theorem}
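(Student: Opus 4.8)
For part (i), the plan is to reduce everything to one computation. Since $t^+(c,u)$ is a critical point of $t\mapsto \varphi_{c,u}(t)$, the variation of $t^+$ in $c$ contributes a factor $\varphi'_{c,u}(t^+)=0$, so differentiating under the critical point gives
$$\frac{\partial}{\partial c}\widetilde{\Lambda}^+(c,u)=\frac{\partial}{\partial c}\varphi_{c,u}(t)\Big|_{t=t^+(c,u)}=-\frac{\alpha\, t^+(c,u)^{-\alpha}}{A(u)}<0,\qquad u\in S_{\mathcal{C}_A}.$$
Hence $c\mapsto\widetilde{\Lambda}^+(c,u)$ is strictly decreasing for each fixed $u$, and since the class $\F_k$ is independent of $c$, the minimax \eqref{50} shows $c\mapsto\lambda_{c,k}^+$ is nonincreasing. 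To obtain strict monotonicity and continuity I would bound this derivative from both sides on each sublevel region $R=\{(c,u): c\in[a,b]\subset J^+,\ \widetilde{\Lambda}^+(c,u)\le\Lambda_0\}$: on $R$, $t^+$ is bounded above by \eqref{ine+} and $A(u)$ is bounded above on the sphere by $(C2)$, yielding a lower bound $\delta>0$ for $-\partial_c\widetilde{\Lambda}^+$; while $A(u)$ is bounded below by coercivity (Lemma \ref{coerciveboundary}, Lemma \ref{topolopro}) and $t^+$ is bounded below because $\widetilde{\Lambda}^+(c,u)\ge C_1|c|\,t^+(c,u)^{-\alpha}$ (Lemma \ref{topolopro}(i)), yielding an upper bound $L$. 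Integrating $\delta\le-\partial_c\widetilde{\Lambda}^+\le L$ along $[c_0,c_1]$ — the a priori bound being self-consistent on short intervals since the values stay in the sublevel region — gives the two-sided estimate $\delta(c_1-c_0)\le\lambda_{c_0,k}^+-\lambda_{c_1,k}^+\le L(c_1-c_0)$ after passing to near-optimal sets, which proves both strict decrease and local Lipschitz continuity on $J^+$.

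For part (ii), the lower bound is immediate: $\widetilde{\Lambda}^+(c,u)=\lambda(c,t^+(c,u)u)>0$ by Lemma \ref{topolopro}(i), so $\lambda_{c,k}^+\ge0$. For the upper bound I would fix once and for all a set $M\in\F_k$ (nonempty since $k\le\gamma(S_{\mathcal{C}_A})$), so that $\lambda_{c,k}^+\le\sup_{u\in M}\widetilde{\Lambda}^+(c,u)$. Eliminating $c$ via the critical-point relation reduces the fiber value to
$$\widetilde{\Lambda}^+(c,u)=\frac{t^+(c,u)^{\eta-\alpha}N(u)-t^+(c,u)^{\beta-\alpha}B(u)}{A(u)},$$
and since \eqref{ine+} forces $t^+(c,u)\to0$ uniformly for $u\in M$ as $c\to0^-$ (here $N$ is bounded below and $A$ bounded below on the compact set $M$, while $N,B$ are bounded above), the right-hand side tends to $0$ uniformly on $M$. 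Thus $\sup_{u\in M}\widetilde{\Lambda}^+(c,u)\to0$, and the sandwich gives $\lim_{c\to0^-}\lambda_{c,k}^+=0$.

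For part (iii), I first note that in the $+$ case $\widetilde{\Lambda}^+(c,\cdot)$ satisfies the Palais--Smale condition at \emph{every} level: the argument of Proposition \ref{H2}(ii) only uses boundedness of $\mathcal{N}_c^+$ and the coercivity keeping $A$ away from $0$ on sublevel sets, both valid at any level. Hence Theorem \ref{thm1} applies and, for each fixed $c\in J^+$, $(\lambda_{c,k}^+)_k$ is nondecreasing with $\lambda_{c,k}^+\to\infty$ as $k\to\infty$. Given $\lambda>0$, I fix $c_0\in J^+$; for $k$ large $\lambda_{c_0,k}^+>\lambda$, whereas $\lambda_{c,k}^+\to0$ as $c\to0^-$ by part (ii), so continuity from part (i) and the intermediate value theorem furnish $c_k\in(c_0,0)$ with $\lambda_{c_k,k}^+=\lambda$. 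Monotonicity in $c$ (decreasing) and in $k$ (increasing, since $\F_{k+1}\subset\F_k$) forces $(c_k)$ to be nondecreasing; were $c_k\to\bar c<0$, choosing $c'\in(\bar c,0)$ would give $\lambda=\lambda_{c_k,k}^+\ge\lambda_{c',k}^+\to\infty$, a contradiction, so $c_k\to0^-$. Renaming gives the desired $(c_n,k_n)$. The moreover-clause is then immediate from \eqref{ine+}: if $v_n\in S_{\mathcal{C}_A}$ then $\|v_n\|=1$, so $\|t^+(c_n,v_n)v_n\|=t^+(c_n,v_n)$, and \eqref{ine+} together with $N(v_n)\ge C^{-1}$ and $c_n\to0^-$ forces $t^+(c_n,v_n)\to0$.

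The step I expect to be the main obstacle is the continuity in (i). The $c$-derivative $-\alpha\, t^+(c,u)^{-\alpha}/A(u)$ genuinely blows up as $c\to0^-$ (there $t^+\to0$, which is precisely the bifurcation phenomenon exhibited in (iii)), so no global Lipschitz bound can hold; the resolution is to confine the estimates to sublevel regions, where Lemma \ref{topolopro}(i) supplies the lower bound on $t^+$ and hence the missing upper bound on the derivative. Verifying the self-consistency of this a priori bound along the integration path, and checking that near-optimal minimax sets actually lie in such a region, is the delicate point.
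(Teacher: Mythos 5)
Your proposal is correct and follows essentially the same route as the paper: the envelope formula for $\partial_c\widetilde{\Lambda}^+$ together with two-sided bounds on sublevel regions (the paper's Lemma \ref{Lemma 2} and Proposition \ref{Proposition 1}) for (i), uniform convergence $t^+(c,u)\to 0$ on a fixed compact $M\in\F_k$ for (ii), and Theorem \ref{thm1} combined with monotonicity and the intermediate value theorem for (iii). The ``delicate point'' you flag in (i) is resolved exactly along the lines you suspect: Lemma \ref{Lemma 2} shows the minimax may be computed over sets contained in a single sublevel set $\widetilde{\Lambda}(b,\cdot)^T$, and the lower bound on $A(t(c,u)u)$ for $c\in[a,b]$ is then obtained by contradiction from $(H2)$-$(c)$ applied at the fixed endpoint $b$ alone, so no a priori control of $\widetilde{\Lambda}(c,u)$ at intermediate values of $c$ is required.
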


\begin{theorem}\label{thmcurves-} Let $1 \le k \le \gamma (S_{\mathcal{C}_A \cap \mathcal{C}_B})$. Then:
	\begin{enumerate}
		\item[i)] The map $c \mapsto \lambda_{c,k}^-$ is continuous and decreasing in $J^-$. Moreover $\lambda_{c,k}^+<\lambda_{c,k}^-$ for any $c\in J^+$.
		\item[ii)]  If $(C4)$ holds then  $\displaystyle \lim_{c\to \infty}\lambda_{c,k}^-=-\infty$.
		\item[iii)] If $(C4)$ holds and $\gamma(S_{\mathcal{C}_A\cap \mathcal{C}_B})=\infty$, then for all $\lambda\in \mathbb{R}$ there exist two sequences $(k_n)\subset \mathbb{N}$ and $(c_n)\subset (c^*,\infty)$ such that $k_n\to \infty$, $c_n\to \infty$ and $\lambda=\lambda_{c_n,k_n}^-$ for every $n$. Moreover, if $u_n\in S_{\mathcal{C}_A\cap \mathcal{C}_B}$ satisfies $\widetilde{\Lambda}^-(c_n,u_n)=\lambda$, then $\|t^-(c_n,u_n)u_n\|=t^-(c_n,u_n)\to \infty$.
	\end{enumerate}

\end{theorem}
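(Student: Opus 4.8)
The plan is to push everything down to the scalar fibering maps $\varphi_{c,u}$ and then transport the information to the minimax levels $\lambda^{\pm}_{c,k}$. The enabling observation is that $\varphi_{c,u}(t)$, and hence $t^{\pm}(c,u)$ and $\widetilde{\Lambda}^{-}(c,u)=\varphi_{c,u}(t^{-}(c,u))$, depend on $u$ only through the three scalars $N(u),A(u),B(u)$. On any sublevel region $R=\{u\in S_{\mathcal{C}_A\cap\mathcal{C}_B}:\widetilde{\Lambda}^{-}(c,u)\le \lambda^{-}_{c,k}+1\}$ these scalars range over a fixed compact set bounded away from the degenerate values: by $(C2)$ one has $N(u)\in[C^{-1},C']$, while the coercivity $(H2)$-$(c)$ of Lemma \ref{topolopro} keeps $A(u),B(u)$ and $t^{-}(c,u)$ bounded above and away from $0$. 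This compactness of the scalar range is exactly what upgrades pointwise facts to the uniform estimates the minimax requires.

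For (i), differentiating via the envelope identity, using $(H1)$-$(a)$ for the $C^1$ dependence of $t^{-}$ and $\varphi_{c,u}'(t^{-}(c,u))=0$, gives $\partial_c\widetilde{\Lambda}^{-}(c,u)=\partial_c\varphi_{c,u}(t^{-}(c,u))=-\alpha\,t^{-}(c,u)^{-\alpha}/A(u)<0$. On $R$ this quantity is bounded above and below by negative constants, so for $c_1<c_2$ one gets $\widetilde{\Lambda}^{-}(c_2,u)\le\widetilde{\Lambda}^{-}(c_1,u)-m(c_2-c_1)$ and a two–sided Lipschitz bound $|\widetilde{\Lambda}^{-}(c,u)-\widetilde{\Lambda}^{-}(c_0,u)|\le L|c-c_0|$. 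Applying $\sup_M$ over near-optimal $M\subset R$ and then $\inf_M$ yields that $c\mapsto\lambda^{-}_{c,k}$ is continuous (indeed locally Lipschitz) and strictly decreasing. For $\lambda^{+}_{c,k}<\lambda^{-}_{c,k}$ note that for $u\in S_{\mathcal{C}_A\cap\mathcal{C}_B}$, $c\in(c^*,0)$, the map $\varphi_{c,u}$ runs from $+\infty$ down to the local minimum at $t^{+}$ and up to the local maximum at $t^{-}$, so $\widetilde{\Lambda}^{+}(c,u)=\varphi_{c,u}(t^{+})<\varphi_{c,u}(t^{-})=\widetilde{\Lambda}^{-}(c,u)$. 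The gap $G:=\widetilde{\Lambda}^{-}-\widetilde{\Lambda}^{+}$ is a continuous positive function of $(N(u),A(u),B(u))$, hence $G\ge\delta_0>0$ on the compact scalar range of $R$; choosing $M\subset R$ with $\sup_M\widetilde{\Lambda}^{-}\le\lambda^{-}_{c,k}+\varepsilon$, $\varepsilon<\delta_0$, and observing that $M$ is admissible for the larger cone $S_{\mathcal{C}_A}$, we obtain $\lambda^{+}_{c,k}\le\sup_M\widetilde{\Lambda}^{+}\le\sup_M\widetilde{\Lambda}^{-}-\delta_0<\lambda^{-}_{c,k}$.

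For (ii), under $(C4)$ Lemma \ref{topolopro}-(iii) and Proposition \ref{H2} make $\lambda^{-}_{c,k}$ a genuine critical level for every $c>c^*$, so $J^{-}=(c^*,\infty)$. Fixing any $M\in\F_k(S_{\mathcal{C}_A\cap\mathcal{C}_B})$ gives $\lambda^{-}_{c,k}\le\sup_M\widetilde{\Lambda}^{-}(c,\cdot)$, and it suffices to prove $\widetilde{\Lambda}^{-}(c,u)\to-\infty$ as $c\to\infty$, uniformly on $M$. For fixed $u$ the critical-point relation $\tfrac{\beta-\alpha}{\beta}(t^{-})^{\beta}B(u)=\tfrac{\eta-\alpha}{\eta}(t^{-})^{\eta}N(u)+\alpha c$ forces $t^{-}(c,u)\sim\big(\alpha\beta c/((\beta-\alpha)B(u))\big)^{1/\beta}$, and substituting into \eqref{funcionalnehari} yields $\widetilde{\Lambda}^{-}(c,u)\sim -C(u)\,c^{(\beta-\alpha)/\beta}\to-\infty$. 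Since $(N,A,B)$ stays in a compact set on $M$, the constant $C(u)$ is bounded below by a positive number, so the decay is uniform and $\sup_M\widetilde{\Lambda}^{-}(c,\cdot)\to-\infty$.

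For (iii), I would assemble the pieces. For fixed $c$, $(C4)$ gives the Palais--Smale condition at all levels, so the final part of Theorem \ref{thm1} makes $k\mapsto\lambda^{-}_{c,k}$ nondecreasing and unbounded. Given $\lambda\in\mathbb{R}$ and any $c_0>c^*$, for all large $k$ we have $\lambda^{-}_{c_0,k}>\lambda$, while by (ii) $\lambda^{-}_{c,k}\to-\infty$ as $c\to\infty$; since by (i) $c\mapsto\lambda^{-}_{c,k}$ is continuous and decreasing, the intermediate value theorem produces a unique $c_k>c_0$ with $\lambda^{-}_{c_k,k}=\lambda$, and monotonicity together with $\lambda^{-}_{c_0,k}\to\infty$ forces $c_k\to\infty$; setting $k_n=n$, $c_n=c_{k_n}$ gives the desired sequences. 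Finally, writing $u_n=t^{-}(c_n,v_n)v_n\in\mathcal{N}^{-}_{c_n}$, the identity \eqref{funcionalnehari} reads $\tfrac{\beta-\eta}{\eta}N(u_n)=\beta c_n+\tfrac{\beta-\alpha}{\alpha}\lambda A(u_n)$; if $\|u_n\|$ stayed bounded the right-hand side would tend to $+\infty$ (using $A(u_n)\le C\|u_n\|^{\alpha}$) while the left side stayed bounded, a contradiction, so $\|u_n\|=t^{-}(c_n,v_n)\to\infty$ and $(\lambda,\infty)$ is a bifurcation point. I expect the main obstacle to be the strict inequality $\lambda^{+}_{c,k}<\lambda^{-}_{c,k}$ together with the strict monotonicity for general $k$: the naive minimax comparison only gives $\le$, and one genuinely needs the uniform gap $\delta_0$ and the uniform Lipschitz bound, both of which rest on the scalar reduction combined with the coercivity $(H2)$-$(c)$ confining near-optimal sets to a region where $(N,A,B)$ is compactly constrained; the correct scaling $t^{-}\sim c^{1/\beta}$ in (ii) is the other delicate point.
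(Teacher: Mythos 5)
Your proposal is correct and, for parts (ii) and (iii), follows essentially the same route as the paper: part (ii) is the paper's Proposition \ref{Lemma 4}(ii) (you compute the asymptotics $t^-(c,u)\sim (\alpha\beta c/((\beta-\alpha)B(u)))^{1/\beta}$ directly, whereas the paper dominates $\widetilde{\Lambda}^-(c,\cdot)$ on a fixed compact $M$ by a one-dimensional function $\psi_c(t)=C_1t^{\eta-\alpha}-C_2t^{\beta-\alpha}-C_3t^{-\alpha}c$ and shows $\sup_t\psi_c\to-\infty$; these are equivalent), and part (iii) is the same intermediate-value/induction scheme combined with the same Nehari identity \eqref{funcionalnehari} to force $t^-(c_n,v_n)\to\infty$. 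For the continuity and strict decrease in (i) you also reproduce the paper's mechanism (mean value theorem in $c$ plus two-sided bounds on $\partial_c\widetilde{\Lambda}$ coming from $A(t(c,u)u)$ being bounded and bounded away from zero on sublevel sets); the paper packages the uniformity in $c$ via Lemma \ref{Lemma 2}, fixing a single admissible family $\F_{b,T,k}$ valid on all of $[a,b]$, and you should make that step explicit since your region $R$ is defined at a single value of $c$ (your bound $\widetilde{\Lambda}^-(c_2,u)\le\widetilde{\Lambda}^-(c_1,u)-m(c_2-c_1)$ needs the derivative bound along the whole segment $[c_1,c_2]$, which follows because monotonicity keeps $u$ in the $c_1$-sublevel set). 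A harmless discrepancy: your $\partial_c\widetilde{\Lambda}=-\alpha t^{-\alpha}/A(u)$ versus the paper's \eqref{inedecrea} $-1/A(t(c,u)u)=-t^{-\alpha}/A(u)$; the factor $\alpha$ comes from $I_2=\tfrac1\alpha A$ and does not affect anything.

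Where you genuinely depart from the paper is the strict inequality $\lambda^+_{c,k}<\lambda^-_{c,k}$. The paper's Proposition \ref{pdm} defers to an adaptation of \cite[Proposition 5.9]{MR4736027}, whereas you give a self-contained argument: the pointwise gap $G=\widetilde{\Lambda}^--\widetilde{\Lambda}^+=\int_{t^+}^{t^-}\varphi_{c,u}'>0$ depends on $u$ only through $(N(u),A(u),B(u))$, and on a near-optimal sublevel region these scalars live in a compact subset of the open set $\{c_0(N,B)\le c^*<c,\ A>0\}$ where $G$ is continuous and positive, giving a uniform gap $\delta_0>0$; since any admissible $M\subset S_{\mathcal{C}_A\cap\mathcal{C}_B}$ is also admissible for the larger cone $S_{\mathcal{C}_A}$, this yields $\lambda^+_{c,k}\le\sup_M\widetilde{\Lambda}^+\le\sup_M\widetilde{\Lambda}^--\delta_0<\lambda^-_{c,k}$. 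This is the right observation to make explicit (the naive minimax comparison only gives $\le$), it is arguably more transparent than the reference-chasing in the paper, and it buys a proof that is readable without \cite{MR4736027}; the paper's route, on the other hand, avoids having to verify that $A$ and $B$ are bounded away from zero on the sublevel region, a point you use and which does follow from Lemma \ref{topolopro} and \eqref{N-bounded} but deserves a line of justification.
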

We prove the above theorems relying on the next results:

\begin{lemma} \label{Lemma 1}
	The following assertions hold:
	\begin{enumroman}
		\item[i)] For any $u \in S_{\mathcal{\mathcal{C}}}$, the map $ c \mapsto \widetilde{\Lambda}(c,u)$ is decreasing in $J$.
		\item[ii)] For any $1 \le k \le \gamma(S_{\mathcal{C}})$, the map $c \mapsto \lambda_{c,k}$ is nonincreasing in $J$.
	\end{enumroman}
\end{lemma}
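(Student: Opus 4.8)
The plan is to establish (i) by a first-variation (envelope) computation along each fibre and then to read off (ii) directly from the minimax formula \eqref{50}.

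First I would fix $u \in S_{\mathcal{C}}$ and record that $\widetilde{\Lambda}(c,u)=\Lambda(c,u)=\varphi_{c,u}(t(c,u))$, where $t(c,u)>0$ is the critical point selected in $(H1)$-$(b)$. Since that critical point is of Morse type, i.e.\ $\varphi''_{c,u}(t(c,u))\neq 0$, and $(c,u,t)\mapsto\varphi'_{c,u}(t)$ is $C^1$ by $(H1)$-$(a)$, the Implicit Function Theorem applied to $\varphi'_{c,u}(t)=0$ (in the variable $t$, with $c$ as parameter) shows that $c\mapsto t(c,u)$ is $C^1$ on $J$, whence $c\mapsto\widetilde{\Lambda}(c,u)$ is $C^1$ as well. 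This argument is branch-independent, so it covers $t(c,u)=t^+(c,u)$ and $t(c,u)=t^-(c,u)$ at once, and in both cases $\mathcal{C}\subset\mathcal{C}_A$.

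Then I would differentiate by the chain rule,
$$\frac{d}{dc}\,\widetilde{\Lambda}(c,u)=\frac{\partial\varphi_{c,u}}{\partial c}(t(c,u))+\varphi'_{c,u}(t(c,u))\,\frac{\partial t}{\partial c}(c,u),$$
where the second summand vanishes because $t(c,u)$ is a critical point of $\varphi_{c,u}$. From the explicit expression of $\varphi_{c,u}$ one reads off $\frac{\partial\varphi_{c,u}}{\partial c}(t)=-\alpha\,t^{-\alpha}/A(u)$, so that $\frac{d}{dc}\,\widetilde{\Lambda}(c,u)=-\alpha\,t(c,u)^{-\alpha}/A(u)$. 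Because $u\in\mathcal{C}\subset\mathcal{C}_A$ forces $A(u)>0$ and $t(c,u)>0$, this derivative is strictly negative on all of $J$, and (i) follows. The one structural point that makes this work is that the fibre parameter is a genuine critical point, which annihilates the $\partial t/\partial c$ term and leaves only the manifestly negative partial derivative in $c$.

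For (ii) I would argue directly from \eqref{50}, noting first that the family $\F_k$ depends only on $S_{\mathcal{C}}$ (through the cone $\mathcal{C}$), hence is the \emph{same} for every $c\in J$. Fixing $c_1<c_2$ in $J$, part (i) gives $\widetilde{\Lambda}(c_1,u)\ge\widetilde{\Lambda}(c_2,u)$ for all $u\in S_{\mathcal{C}}$; taking $\sup_{u\in M}$ over a fixed $M\in\F_k$ and then $\inf_{M\in\F_k}$ preserves the inequality, so $\lambda_{c_1,k}\ge\lambda_{c_2,k}$. I expect the only genuinely delicate step to be the justification of the envelope computation, namely the $C^1$-dependence of $t(c,u)$ on $c$, which rests on the Morse nondegeneracy in $(H1)$-$(b)$; everything else is routine. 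Note also that strictness is necessarily lost in (ii): an infimum of strictly decreasing functions need only be nonincreasing, which is exactly why (ii) is stated with ``nonincreasing'' rather than ``decreasing''.
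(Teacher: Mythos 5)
Your proposal is correct and takes essentially the same route as the paper: part $(i)$ is the Implicit Function Theorem (guaranteed by the Morse nondegeneracy in $(H1)$) together with the envelope identity $\frac{d}{dc}\widetilde{\Lambda}(c,u)=\frac{\partial \varphi_{c,u}}{\partial c}(t(c,u))<0$, which the paper records as \eqref{inedecrea} while outsourcing the IFT step you spell out to \cite{MR4736027}, and part $(ii)$ is the identical monotonicity-of-$\inf\sup$ argument over the $c$-independent family $\F_k$. The only discrepancy is the harmless normalizing constant: differentiating $\lambda(c,u)=(I_1(u)-c)/I_2(u)$ with $I_2=\tfrac{1}{\alpha}A$ gives your $-\alpha\,t(c,u)^{-\alpha}/A(u)$ rather than the paper's $-1/A(t(c,u)u)$, but the sign, which is all that matters, agrees.
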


\begin{proof} $i)$ is a straightforward application of the Implicit Function Theorem (see \cite[Section 3]{MR4736027} where $X\setminus\{0\}$ has to be replaced by $\mathcal{C}$). For further use let us register here that 
	\begin{equation}\label{inedecrea}
		 \frac{\partial \widetilde{\Lambda}(c,u)}{\partial c}=-\frac{1}{A(t(c,u)u)},\ u\in \mathcal{S}_\mathcal{C}.
	\end{equation}
To prove $ii)$, given $c_1<c_2$, note by $i)$ that
\begin{equation*}
		\lambda_{c_2,k}=\inf_{M \in \F_{k}}\, \sup_{u \in M}\, \widetilde{\Lambda}(c_2,u)\le \inf_{M \in \F_{k}}\, \sup_{u \in M}\, \widetilde{\Lambda}(c_1,u) \le \lambda_{c_1,k}.
		\end{equation*}
\end{proof}

Lemma \ref{Lemma 1} implies that we can work in a suitable sublevel set $\widetilde{\Lambda}(c,\cdot)^T = \bgset{u \in S_{\mathcal{\mathcal{C}}} : \widetilde{\Lambda}(c,u) \le T}$, as shown by the next result:

\begin{lemma} \label{Lemma 2}
	Let $[a,b] \subset J$, $1 \le k \le \gamma(\mathcal{S}_\mathcal{C})$, and $T > \lambda_{a,k}$. Denote by $\F_{b,T}$ the class of symmetric, closed subsets of $\widetilde{\Lambda}(b,\cdot)^T$ and let $\F_{b,T,k} = \bgset{M \in \F_{b,T}:\ $M$\ \mbox{is compact and}\ \gamma(M) \ge k}$. Then
	\[
	\lambda_{c,k}=\inf_{M \in \F_{b,T,k}}\, \sup_{u \in M}\, \widetilde{\Lambda}(c,u)  \quad \forall c \in [a,b].
	\]
\end{lemma}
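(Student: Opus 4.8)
The plan is to show that restricting the class of competitors $\F_k$ to those sets lying inside the sublevel set $\widetilde{\Lambda}(b,\cdot)^T$ does not change the min-max value $\lambda_{c,k}$ for any $c \in [a,b]$. The key tool is the monotonicity from Lemma \ref{Lemma 1}: since $c \mapsto \widetilde{\Lambda}(c,u)$ is decreasing in $J$, for $c \le b$ we have $\widetilde{\Lambda}(c,u) \ge \widetilde{\Lambda}(b,u)$ pointwise, so the sublevel sets are nested in a way that lets us control things from below by the value at $b$.

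First I would prove the easy inequality. Since $\F_{b,T,k} \subset \F_k$ (every set in the restricted class is in particular compact, symmetric, and of genus $\ge k$), taking the infimum over the smaller class can only increase it, so
\begin{equation*}
\lambda_{c,k} \le \inf_{M \in \F_{b,T,k}}\, \sup_{u \in M}\, \widetilde{\Lambda}(c,u), \quad \forall c \in [a,b].
\end{equation*}
For the reverse inequality, fix $c \in [a,b]$ and let $M \in \F_k$ be an arbitrary competitor. The goal is to produce from $M$ a competitor $\widetilde{M} \in \F_{b,T,k}$ whose $\widetilde{\Lambda}(c,\cdot)$-sup is no larger than that of $M$ (up to an arbitrarily small error). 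The natural candidate is $\widetilde{M} = M \cap \widetilde{\Lambda}(b,\cdot)^T$: this is compact, symmetric, and contained in $\widetilde{\Lambda}(b,\cdot)^T$, hence lies in $\F_{b,T}$, provided we can guarantee $\gamma(\widetilde{M}) \ge k$.

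The main obstacle is precisely this genus control: truncating $M$ by intersecting with a sublevel set could in principle drop the genus. To handle it, I would argue that only competitors $M$ with $\sup_M \widetilde{\Lambda}(c,\cdot)$ close to $\lambda_{c,k}$ matter, and for such $M$ the portion lying outside $\widetilde{\Lambda}(b,\cdot)^T$ is irrelevant. Concretely, since $T > \lambda_{a,k} \ge \lambda_{c,k}$ (using Lemma \ref{Lemma 1}(ii), as $c \ge a$), I may restrict attention to $M \in \F_k$ with $\sup_{u \in M} \widetilde{\Lambda}(c,u) < T$. For any such $M$ and any $u \in M$ we then have $\widetilde{\Lambda}(b,u) \le \widetilde{\Lambda}(c,u) < T$ by the pointwise monotonicity of Lemma \ref{Lemma 1}(i) (since $b \ge c$), which shows $M \subset \widetilde{\Lambda}(b,\cdot)^T$ already, i.e. $M \in \F_{b,T,k}$ with no truncation needed. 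Taking the infimum over this cofinal subfamily of $\F_k$ yields
\begin{equation*}
\inf_{M \in \F_{b,T,k}}\, \sup_{u \in M}\, \widetilde{\Lambda}(c,u) \le \lambda_{c,k},
\end{equation*}
and combined with the first inequality this gives equality for every $c \in [a,b]$. The crux is thus recognizing that the condition $T > \lambda_{a,k}$ forces all near-optimal competitors to sit automatically inside the sublevel set at $b$, so no genus-preserving deformation or truncation argument is actually required — the monotonicity in both the $c$-argument and the min-max level does all the work.
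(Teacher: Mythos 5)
Your argument is correct and is essentially the paper's own proof stated in contrapositive form: the paper shows directly that any $M \in \F_k \setminus \F_{b,T,k}$ satisfies $\sup_{u\in M}\widetilde{\Lambda}(c,u) \ge \sup_{u\in M}\widetilde{\Lambda}(b,u) > T > \lambda_{a,k} \ge \lambda_{c,k}$ and hence cannot affect the infimum, while you show that every near-optimal competitor (with $\sup_M \widetilde{\Lambda}(c,\cdot) < T$) already lies in $\F_{b,T,k}$ — the same use of Lemma \ref{Lemma 1} in both variables. Your preliminary worry about genus loss under truncation is correctly dismissed; no truncation is needed, exactly as in the paper.
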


\begin{proof}
	Clearly $\F_{b,T,k} \subset \F_k$. In addition, if $M \in \F_k \setminus \F_{b,T,k}$ and $c \in [a,b]$ then, 	by Lemma \ref{Lemma 1}, we have
	\[
	\sup_{u \in M}\, \widetilde{\Lambda}(c,u) \ge \sup_{u \in M}\, \widetilde{\Lambda}(b,u) > T >	\lambda_{a,k} \ge \lambda_{c,k},
	\]
    which provides the desired conclusion.
\end{proof}

Next we show that the curves $C_k$ are continuous and decreasing:

\begin{proposition} \label{Proposition 1}
	For any $1\le k \le \gamma(\mathcal{S}_\mathcal{C})$, the map $ c \mapsto \lambda_{c,k}$ is continuous and decreasing in $J$.
\end{proposition}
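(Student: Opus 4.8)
The plan is to build on Lemma \ref{Lemma 1}, which already gives that $c\mapsto\lambda_{c,k}$ is nonincreasing on $J$; it remains to upgrade this to \emph{strictly} decreasing and to establish continuity. The engine for both is the pointwise identity \eqref{inedecrea}, $\partial_c\widetilde{\Lambda}(c,u)=-1/A(t(c,u)u)$, combined with two-sided uniform control of $A(t(c,u)u)$ on sublevel sets. Throughout I fix a compact subinterval $[a,b]\subset J$ and a threshold $T>\lambda_{a,k}$, so that by monotonicity $\lambda_{c,k}\le\lambda_{a,k}<T$ for all $c\in[a,b]$; Lemma \ref{Lemma 2} then lets me compute every $\lambda_{c,k}$, $c\in[a,b]$, as a minimax over the fixed family of compact symmetric subsets of $\widetilde{\Lambda}(b,\cdot)^T$, and to work with sets $M$ that are $\epsilon$-optimal. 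The regularity of $s\mapsto\widetilde{\Lambda}(s,u)$ needed below is supplied by $(H1)$ and \eqref{inedecrea}.

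First I would record two uniform estimates. On $\{u\in\mathcal{S}_\mathcal{C}:\widetilde{\Lambda}(s,u)\le T\}$ I claim $\delta\le A(t(s,u)u)\le C$ for constants $\delta,C>0$ independent of $s\in[a,b]$. The upper bound comes from boundedness of the relevant Nehari points: by \eqref{ine+} in the $+$ case and by the coercivity estimate \eqref{eqco} in the $-$ case, $\widetilde{\Lambda}(s,u)\le T$ forces $\|t(s,u)u\|\le R$ uniformly in $s\in[a,b]$ (here $-\beta s$ is bounded on $[a,b]$), whence $A(t(s,u)u)\le C\|t(s,u)u\|^\alpha\le CR^\alpha$. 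The lower bound is the coercivity at the boundary from Lemma \ref{topolopro} (see also Lemma \ref{coerciveboundary}), read off \eqref{funcionalnehari}: $\widetilde{\Lambda}(s,u)\to\infty$ as $A(t(s,u)u)\to0$. The only point to verify is that these constants can be chosen uniformly for $s\in[a,b]$, which holds because the $s$-dependence enters only through the bounded quantities $s$ and $c^{**}-s\ge c^{**}-b>0$.

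Strict monotonicity is then the easy direction. Given $c<c'$ in $[a,b]$, pick $M\in\F_{k}$ with $\sup_{u\in M}\widetilde{\Lambda}(c,u)\le\lambda_{c,k}+\epsilon<T$. Since $s\mapsto\widetilde{\Lambda}(s,u)$ is nonincreasing, every $u\in M$ stays in the sublevel $\widetilde{\Lambda}(s,\cdot)^T$ for all $s\in[c,c']$, so the bound $A(t(s,u)u)\le C$ applies on this whole range and \eqref{inedecrea} yields $\widetilde{\Lambda}(c',u)\le\widetilde{\Lambda}(c,u)-(c'-c)/C$. Taking the supremum over $u\in M$, then the infimum over $M$, and letting $\epsilon\to0$, gives $\lambda_{c',k}\le\lambda_{c,k}-(c'-c)/C<\lambda_{c,k}$.

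The reverse Lipschitz bound, and hence continuity, is the delicate step, and it is here that the lack of completeness of $\mathcal{N}_c^\pm$ bites: moving to \emph{smaller} $c$ pushes points toward the ``boundary'' of $\mathcal{S}_\mathcal{C}$ where $\widetilde{\Lambda}$ blows up, so the lower bound $A\ge\delta$ is only available \emph{a posteriori}, on a sublevel one does not yet know is preserved. I would resolve this by a continuation argument. Fix $T'>\lambda_{a,k}$ with associated lower bound $\delta'=\delta(T')$, and take $M$ that is $\epsilon$-optimal for the right endpoint $c'$, so $\widetilde{\Lambda}(c',u)\le\lambda_{a,k}+\epsilon$ for $u\in M$. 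For each such $u$, let $[s_*,c']$ be the largest subinterval on which $\widetilde{\Lambda}(\cdot,u)\le T'$; there $A\ge\delta'$, so \eqref{inedecrea} controls the growth and $\widetilde{\Lambda}(s_*,u)\le\lambda_{a,k}+\epsilon+(c'-s_*)/\delta'$. Provided $c'-c<\delta'(T'-\lambda_{a,k}-\epsilon)$, the right-hand side stays below $T'$, so by continuity $s_*=c$; thus $\widetilde{\Lambda}(\cdot,u)\le T'$ on all of $[c,c']$ and $\widetilde{\Lambda}(c,u)\le\widetilde{\Lambda}(c',u)+(c'-c)/\delta'$. Passing to the supremum over $M$ and the infimum, then $\epsilon\to0$, gives $\lambda_{c,k}-\lambda_{c',k}\le(c'-c)/\delta'$ for $c,c'$ close, i.e. local Lipschitz continuity; together with monotonicity this proves $c\mapsto\lambda_{c,k}$ is continuous on $J$. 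I expect this continuation step, and the verification that all coercivity constants are uniform in $c$ on compact subintervals, to be the only genuinely nontrivial part; everything else is bookkeeping with \eqref{inedecrea} and Lemma \ref{Lemma 2}.
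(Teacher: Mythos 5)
Your proof is correct and follows the paper's skeleton: Lemma \ref{Lemma 2} to localize to a fixed sublevel set, the identity \eqref{inedecrea}, and two-sided uniform control of $A(t(c,u)u)$ to convert this into a two-sided Lipschitz estimate on $c\mapsto\lambda_{c,k}$. Where you genuinely diverge is in how that uniform control is secured. The paper applies the mean value theorem over the whole interval $[a,b]$ to each $u\in\widetilde{\Lambda}(b,\cdot)^T$, and therefore must bound $A(t(c,u)u)$ above and away from zero on the full product $[a,b]\times\widetilde{\Lambda}(b,\cdot)^T$ --- including pairs $(c,u)$ for which $\widetilde{\Lambda}(c,u)$ may well exceed $T$. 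It does this by contradiction: $(H2)$-$(c)$ at the level $b$ (together with $t(c,u)$ being bounded away from zero, Lemma \ref{boundary}) rules out $A(t(c_n,u_n)u_n)\to 0$, while an argument combining \eqref{Nehari}, \eqref{N-bounded} and the boundedness of $t^-(b,u_n)$ rules out $t(c_n,u_n)\to\infty$. You instead split the estimate into two one-sided inequalities, each anchored at an $\eps$-optimal set for one endpoint, and handle the possible escape from the sublevel set as $c$ decreases --- the manifestation of the non-completeness of $\mathcal{N}_c^\pm$ that you correctly single out --- by a continuation argument keeping $\widetilde{\Lambda}(\cdot,u)\le T'$ on all of $[c,c']$ when $c'-c$ is small. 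Your route cleanly separates the two bounds and avoids the paper's somewhat delicate estimate on the product set, at the price of obtaining only a local Lipschitz bound (which of course suffices); both routes ultimately rest on the same uniformity of the coercivity constants of Lemma \ref{topolopro} over compact subintervals of $J$, which you rightly flag as the one point requiring verification.
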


\begin{proof}
	Let $[a,b] \subset J$ and $T > \lambda_{a,k}$. By Lemma \ref{Lemma 2} we know that
	\begin{equation} \label{39}
	\lambda_{c,k}=	\inf_{M \in \F_{b,T,k}}\, \sup_{u \in M}\, \widetilde{\Lambda}(c,u)  \quad \forall c \in [a,b].
	\end{equation}
Given $u \in \widetilde{\Lambda}(b,\cdot)^T$, by the mean value theorem we have that
\[
	\widetilde{\Lambda}(b,u) - \widetilde{\Lambda}(a,u) = \frac{\partial \widetilde{\Lambda}(c,u)}{\partial c}\, (b - a)
	\]
for some $c \in (a,b)$. We claim that $A(t(c,u)u)$ remains bounded and away from zero in $[a,b] \times \widetilde{\Lambda}(b,\cdot)^T$, which implies, by \eqref{inedecrea}, that 
\[
	- C\, (b - a) \le \widetilde{\Lambda}(b,u) - \widetilde{\Lambda}(a,u) \le - C^{-1}\, (b - a)
	\]
for any $u \in \widetilde{\Lambda}(b,\cdot)^T$ and some $C>0$. Indeed, if $A(t(c,u)u)$ is not away from zero in $[a,b] \times \widetilde{\Lambda}(b,\cdot)^T$ then there exists a sequence $((c_n,u_n)) \subset [a,b]\times \mathcal{S}_\mathcal{C}$ such that $A(u_n)\to 0$ (by Lemma \ref{boundary}, $t(c_n,u_n)$ is away from zero). Since $\widetilde{\Lambda}(b,u)$ satisfies $(H2)$, by Proposition \ref{H2}, we conclude that $\widetilde{\Lambda}(b,u)$ is unbounded in $\widetilde{\Lambda}(b,\cdot)^T$, a contradiction. Now, if $A(t(c,u)u)$ is unbounded in $[a,b] \times \widetilde{\Lambda}(b,\cdot)^T$, then there exists a sequence $((c_n,u_n)) \subset [a,b]\times \mathcal{S}_\mathcal{C}$ such that $t(c_n,u_n)\to \infty$. By \cite[Lemma 5.4]{MR4736027} we know that $t^+$ is bounded from above in $J^+ \times \mathcal{S}$, thus $t(c_n,u_n)=t^-(c_n,u_n)$, and by \eqref{Nehari} and $(C3)$ it follows that $B(u_n)\to 0$. Now, since $\Lambda^-(b,u_n) \leq T$ we know by Lemma \ref{topolopro} that $(t^-(b,u_n))$ is bounded. However, 
 by \eqref{N-bounded}, we have that $$1>t^-(b,u_n)^{-1}\left(\frac{\eta-\alpha}{\beta-\alpha}\frac{N(u_n)}{B(u_n)}\right)^{\frac{1}{\beta-\eta}},$$ which contradicts $B(u_n) \to 0$.

Thus the claim is proved, and \eqref{39} yields
\[
	- C\, (b - a) \le \lambda_{b,k} - \lambda_{a,k} \le - C^{-1}\, (b - a),
	\]
from which the desired conclusions follow.
\end{proof}
\begin{proposition}\label{Lemma 4} Let $c \in J$. Then:
	
	\begin{enumerate}
	\item[i)]  $\displaystyle \lim_{c\to 0^-}\lambda_{c,k}^+=0$ for any $1\le k\le \gamma(S_{\mathcal{C}_A})$.
	\item[ii)] Suppose that $(C4)$ holds. Then $\displaystyle \lim_{c\to \infty}\lambda_{c,k}^-=-\infty$ for any $1\le k\le \gamma(S_{\mathcal{C}_A\cap \mathcal{C}_B})$. 
	\end{enumerate}

\end{proposition}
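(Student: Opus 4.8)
The plan is to bound the minimax value $\lambda_{c,k}^{\pm}$ by the supremum of $\widetilde{\Lambda}^{\pm}(c,\cdot)$ over a single fixed competitor and to control that supremum through a closed expression for $\widetilde{\Lambda}^{\pm}$ on the sphere. Fix $1\le k\le\gamma(S_{\mathcal{C}_A})$ (resp. $1\le k\le\gamma(S_{\mathcal{C}_A\cap\mathcal{C}_B})$); since the genus is at least $k$, I choose once and for all a compact symmetric set $M_0\in\F_k$ contained in $S_{\mathcal{C}_A}$ (resp. $S_{\mathcal{C}_A\cap\mathcal{C}_B}$). By definition $\lambda_{c,k}^{\pm}\le\sup_{u\in M_0}\widetilde{\Lambda}^{\pm}(c,u)$, so it suffices to show that this supremum tends to $0$ (resp. $-\infty$). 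On the compact set $M_0$ the continuous functionals $N,A,B$ are bounded, with $A$ bounded away from $0$, and in the second case also $B$ bounded away from $0$; write $N(u)\le n_1$, $|B(u)|\le b_1$, $0<a_0\le A(u)\le a_1$, and $b_0\le B(u)$ for the corresponding constants. The key point is that, writing $t=t^{\pm}(c,u)$ and $v=tu\in\mathcal{N}_c^{\pm}$, the critical point condition $\varphi_{c,u}'(t)=0$ reads, by \eqref{Nehari}, as $\frac{\eta-\alpha}{\eta}t^\eta N(u)-\frac{\beta-\alpha}{\beta}t^\beta B(u)+\alpha c=0$; eliminating $c$ between this identity and \eqref{funcionalnehari} a direct computation gives
\[
\widetilde{\Lambda}^{\pm}(c,u)=\frac{t^{\eta-\alpha}\big(N(u)-t^{\beta-\eta}B(u)\big)}{A(u)},\qquad t=t^{\pm}(c,u).
\]
This reduces both limits to a control of the fibering parameter $t^{\pm}(c,u)$ over $M_0$.

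For $i)$, as $c\to0^-$ the estimate \eqref{ine+} together with $N(u)\ge C^{-1}$ on $S$ (from $(C2)$) yields $t^+(c,u)\le (K|c|)^{1/\eta}$ with $K>0$ independent of $u$, so $t^+(c,u)\to0$ uniformly on $M_0$. Since $\eta>\alpha$ and $\beta>\eta$, the displayed identity and the positivity of $\widetilde{\Lambda}^+$ (Lemma \ref{topolopro}~$i)$) give, for $|c|$ small,
\[
0<\widetilde{\Lambda}^+(c,u)\le a_0^{-1}\,t^+(c,u)^{\eta-\alpha}\big(n_1+t^+(c,u)^{\beta-\eta}b_1\big)\longrightarrow0
\]
uniformly on $M_0$. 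Hence $0<\lambda_{c,k}^+\le\sup_{M_0}\widetilde{\Lambda}^+(c,\cdot)\to0$, which proves $i)$.

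For $ii)$ I assume $(C4)$, so that $J^-=(c^*,\infty)$ and, by Proposition \ref{H2}, $\widetilde{\Lambda}^-(c,\cdot)$ is bounded below and $\lambda_{c,k}^-$ is a finite real number for every $c>c^*$. As $c\to\infty$ we are in the range $c\ge0$, where $t^-(c,u)$ is the unique critical point of $\varphi_{c,u}$ (Lemma \ref{neharisets}~$ii)(a)$). Dropping the positive term $\frac{\eta-\alpha}{\eta}t^\eta N(u)$ in the Nehari identity gives $\frac{\beta-\alpha}{\beta}t^-(c,u)^\beta B(u)\ge\alpha c$, hence $t^-(c,u)\ge\big(\tfrac{\alpha\beta c}{(\beta-\alpha)b_1}\big)^{1/\beta}=:s_c\to\infty$ uniformly on $M_0$. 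Then $N(u)-t^-(c,u)^{\beta-\eta}B(u)\le n_1-s_c^{\beta-\eta}b_0\to-\infty$, and for $c$ large one has $t^-(c,u)^{\eta-\alpha}\ge s_c^{\eta-\alpha}\ge1$; multiplying the (eventually negative) second factor by something $\ge1$ and dividing by $A(u)\le a_1$, the displayed identity gives $\widetilde{\Lambda}^-(c,u)\le a_1^{-1}\big(n_1-s_c^{\beta-\eta}b_0\big)\to-\infty$ uniformly on $M_0$. Therefore $\lambda_{c,k}^-\le\sup_{M_0}\widetilde{\Lambda}^-(c,\cdot)\to-\infty$, as claimed.

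The algebra behind the displayed identity is routine; the genuine care lies in making the estimates uniform over $M_0$ and, in $ii)$, in tracking the signs so that the product of the factor $t^{\eta-\alpha}\ge1$ with the eventually negative factor $N(u)-t^{\beta-\eta}B(u)$ is bounded \emph{above} by a quantity diverging to $-\infty$. I expect this sign bookkeeping, controlled entirely by the explicit lower bound $t^-(c,u)\ge s_c$ coming from the Nehari identity, to be the main obstacle; the upper bound \eqref{ine+} plays the symmetric role in $i)$.
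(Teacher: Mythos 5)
Your proposal is correct, and its overall strategy coincides with the paper's: bound $\lambda_{c,k}^{\pm}$ by $\sup_{M_0}\widetilde{\Lambda}^{\pm}(c,\cdot)$ for a single fixed compact competitor $M_0\in\F_k$, and use compactness of $M_0$ to make all estimates on $N$, $A$, $B$ and on the fibering parameter uniform. For part $i)$ your argument is essentially the paper's: both rest on \eqref{ine+} to get $t^+(c,u)\to 0$ uniformly on $M_0$ and then evaluate $\widetilde{\Lambda}^+$ along the fiber; the only cosmetic difference is that you eliminate $c$ entirely to get the closed form $\widetilde{\Lambda}^{\pm}(c,u)=t^{\eta-\alpha}\bigl(N(u)-t^{\beta-\eta}B(u)\bigr)/A(u)$ (which I checked is algebraically correct), whereas the paper keeps the term $t^+(c,u)^{-\alpha}c$ and shows separately that it tends to $0$ uniformly. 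For part $ii)$ the mechanisms genuinely differ: the paper majorizes $\varphi_{c,u}$ on $M_0$ by a fixed one-variable function $\psi_c(t)=C_1t^{\eta-\alpha}-C_2t^{\beta-\alpha}-C_3t^{-\alpha}c$ and shows that $\sup_{t>0}\psi_c(t)\to-\infty$ by analyzing the maximizer $t(c)$ of $\psi_c$; you instead extract the explicit lower bound $t^-(c,u)\ge s_c=\bigl(\alpha\beta c/((\beta-\alpha)b_1)\bigr)^{1/\beta}\to\infty$ directly from the Nehari identity and feed it into the closed-form expression, with careful sign bookkeeping (the second factor is eventually negative, the first is $\ge 1$, and division is by $A(u)\le a_1$). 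Your route is more quantitative and avoids introducing the auxiliary function $\psi_c$; the paper's route avoids the sign discussion by working with an upper envelope from the start. Both are sound, and your uniform lower bound $t^-(c,u)\ge s_c$ is exactly the input one would also need to control $t^-$ in related arguments, so nothing is lost.
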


\begin{proof} $i)$ Indeed, from inequality (5.6) in \cite[Lemma 5.4]{MR4736027} we have
	\begin{equation}\label{ine+}
	t^+(c,u)<\left(-\frac{\alpha\beta\eta c}{(\beta-\eta)(\eta-\alpha)}\frac{1}{N(u)}\right)^{\frac{1}{\eta}},\ \forall c\in(c^*,0),\ u\in S_{\mathcal{C}_A}.
	\end{equation}
Now fix $M\in \F_k$. Since $M$ is a compact set in $S_{\mathcal{C}_A}$ it follows from $(C3)$  that $N$ and $A$ are bounded away from zero, and from above in $M$. Therefore
\begin{equation*}
	\sup_{u\in M}t^+(c,u)\le C|c|^{\frac{1}{\eta}},\ \forall c\in(c^*,0),
\end{equation*}
where $C>0$ is a constant. Consequently $\displaystyle \lim_{c\to 0^-}t^+(c,u)=0$ uniformly in $M$. From \eqref{Nehari} we also have
\begin{equation*}
	\frac{\eta-\alpha}{\eta}t^+(c,u)^\eta N(u)-\frac{\beta-\alpha}{\beta}t^+(c,u)^\beta B(u)+\alpha c=0,\ \forall c\in (c^*,0),
\end{equation*}
which implies that
\begin{equation*}
	\frac{\eta-\alpha}{\eta}t^+(c,u)^{\eta-\alpha}N(u)-\frac{\beta-\alpha}{\beta}t^+(c,u)^{\beta-\alpha} B(u)+\alpha t^+(c,u)^{-\alpha}c=0,\ \forall c\in (c^*,0).
\end{equation*}
Therefore $\displaystyle \lim_{c\to 0^-} t^+(c,u)^{-\alpha}c=0$ uniformly in $M$. To conclude, note by \eqref{funcionalnehari} that
\begin{equation*}
	\widetilde{\Lambda}^+(c,u)=\frac{\frac{\beta-\eta}{\eta}t^+(c,u)^{\eta-\alpha}\N(u)- \beta  t^+(c,u)^{-\alpha} c}{\frac{\beta-\alpha}{\alpha}A(u)},\ \mbox{for } c\in (c^*,0),\  u\in S_{\mathcal{C}_A},
\end{equation*}
which implies that $\displaystyle \lim_{c\to 0^-}	\widetilde{\Lambda}^+(c,u)=0$ uniformly in $M$. Hence
\begin{equation*}
0\le \lim_{c\to 0^-}\lambda_{c,k}^+\le  \lim_{c\to 0^-}\sup_{u\in M}\widetilde{\Lambda}(c,u)=0,
\end{equation*}
and the proof is complete.

$ii)$ Let $M\in \F_k$. Since $M$ is a compact set in $S_{\mathcal{C}_A\cap \mathcal{C}_B}$ it follows from $(C2)$  that $N$, $A$ and $B$ are bounded away from zero, and from above in $M$. From the definition of $\widetilde{\Lambda}(c,u)$ we conclude that
\begin{equation}\label{eqz1}
	\widetilde{\Lambda}(c,u)=\varphi_{c,u}(t^-(c,u))\le\psi_c(t^-(c,u))\le \sup_{t>0}\psi_c(t),
\end{equation}
where
\begin{equation*}
	\psi_c(t)=C_1t^{\eta-\alpha}-C_2t^{\beta-\alpha}-C_3t^{-\alpha}c,
\end{equation*}
and $C_1,C_2,C_3>0$ are constants not depending on $c$. Clearly $\psi_c$ has a unique global maximizer  $t(c)>0$, for any $c>0$. We claim that $\lim_{c\to \infty}\psi_c(t(c))=-\infty$. Indeed, note that
\begin{equation}\label{eqz2}
	C_1(\eta-\alpha)t(c)^{\eta-\alpha}-C_2(\beta-\alpha)t(c)^{\beta-\alpha}+C_3\alpha t(c)^{-\alpha}c=0,\ \forall c>0.
\end{equation}
By solving this equation with respect to $t(c)^{-\alpha}c$ and plugging it into $\psi_c(t(c))$ we obtain
\begin{equation}\label{eqz3}
	\psi_c(t(c))=\frac{C_1\eta}{\alpha}t(c)^{\eta-\alpha}-\frac{C_2\beta}{\alpha}t(c)^{\beta-\alpha},\ \forall c>0.
\end{equation}
Now observe from \eqref{eqz2} that $t(c)\to \infty$ as $c\to \infty$, so since $\beta>\eta$, we conclude from \eqref{eqz3} that $\displaystyle \lim_{c\to \infty}\psi_c(t(c))=-\infty$ and hence, by \eqref{eqz1}, it follows that $\displaystyle \lim_{c\to \infty}	\widetilde{\Lambda}(c,u)=-\infty$ uniformly in $u\in M$. Therefore
\begin{equation*}
	 \lim_{c\to \infty}\lambda_{c,k}^-\le  \lim_{c\to \infty}\sup_{u\in M}\widetilde{\Lambda}^-(c,u)=-\infty.
\end{equation*}
\end{proof}

\begin{proposition} \label{pdm} There holds $\lambda_{c,k}^+<\lambda_{c,k}^-$ for every $1 \le k \le \gamma (S_{\mathcal{C}_A\cap \mathcal{C}_B})$ and $c\in (c^*,0)$. 
\end{proposition}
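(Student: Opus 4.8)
The plan is to first prove the pointwise strict inequality $\widetilde{\Lambda}^+(c,u) < \widetilde{\Lambda}^-(c,u)$ for every $u \in S_{\mathcal{C}_A\cap\mathcal{C}_B}$, and then to upgrade it to a strict inequality between the two minimax levels by producing a \emph{uniform} gap along nearly optimal competitors. For the pointwise statement, fix $c \in (c^*,0)$ and $u \in S_{\mathcal{C}_A\cap\mathcal{C}_B}$. By Lemma \ref{neharisets}, part $ii)(b)$, the fibering map $\varphi_{c,u}$ has exactly two critical points $t^+(c,u) < t^-(c,u)$, a local minimizer and a local maximizer. Since $c<0$ forces $\varphi_{c,u}(t)\to+\infty$ as $t\to 0^+$ while $\beta>\eta$ forces $\varphi_{c,u}(t)\to-\infty$ as $t\to\infty$, the map decreases, then increases on $(t^+,t^-)$, then decreases; in particular $\widetilde{\Lambda}^+(c,u)=\varphi_{c,u}(t^+(c,u)) < \varphi_{c,u}(t^-(c,u)) = \widetilde{\Lambda}^-(c,u)$.

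Next I would record that both $\widetilde{\Lambda}^\pm(c,u)$ depend on $u$ only through the triple $(N(u),A(u),B(u))$: indeed $\varphi_{c,u}$ depends on $u$ only through these quantities, the critical points $t^\pm(c,u)$ are determined by $\varphi'_{c,u}=0$, which involves only $N(u)$ and $B(u)$, and $A(u)$ enters merely as an overall positive factor. Consequently the gap factors as
\[
 g(u):=\widetilde{\Lambda}^-(c,u)-\widetilde{\Lambda}^+(c,u)=\frac{\alpha}{A(u)}\,h\big(N(u),B(u)\big),
\]
where $h>0$ is the difference between the numerator of $\varphi_{c,u}$ evaluated at $t^-(c,u)$ and at $t^+(c,u)$.

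The main step, and the main obstacle, is to bound $g$ below by a positive constant $\delta$ on the sublevel set $\set{u\in S_{\mathcal{C}_A\cap\mathcal{C}_B}:\widetilde{\Lambda}^-(c,u)\le T}$ with $T:=\lambda_{c,k}^-+1$. On the sphere, $(C2)$ gives $C^{-1}\le N(u)\le C'$ and $A(u)\le C$, so the only ways the bound could fail are the coalescence of $t^+(c,u)$ and $t^-(c,u)$ (i.e. $h\to 0$) or the degeneration $A(u)\to 0$. Coalescence occurs only as $c\to c(u)^+$, and since $c>c^*\ge c(u)$ for all $u\in\mathcal{C}_A\cap\mathcal{C}_B$, the parameter $c$ stays uniformly above the degenerate value. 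Together with the coercivity from Lemma \ref{topolopro}, part $ii)$ (which forces $\widetilde{\Lambda}^-(c,u)\to\infty$ whenever $A(u)\to0$, and also whenever $B(u)\to0$, since then $t^-(c,u)=\|t^-(c,u)u\|\to\infty$), this confines the admissible triples $(N(u),A(u),B(u))$ to a compact subset of the region on which $g$ is continuous and strictly positive. Hence $\min g=\delta>0$ on that sublevel set.

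Finally I would combine the ingredients. Since $S_{\mathcal{C}_A\cap\mathcal{C}_B}\subset S_{\mathcal{C}_A}$, every set in $\F_k$ contained in $S_{\mathcal{C}_A\cap\mathcal{C}_B}$ is also admissible for $\lambda_{c,k}^+$. Given $\eps\in(0,1)$, choose such a set $M$ with $\sup_{u\in M}\widetilde{\Lambda}^-(c,u)<\lambda_{c,k}^-+\eps$; then $M$ lies in the sublevel set above, so $\sup_{u\in M}\widetilde{\Lambda}^+(c,u)\le \sup_{u\in M}\widetilde{\Lambda}^-(c,u) -\delta<\lambda_{c,k}^-+\eps-\delta$, whence $\lambda_{c,k}^+\le \lambda_{c,k}^-+\eps-\delta$. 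Letting $\eps\to0$ gives $\lambda_{c,k}^+\le\lambda_{c,k}^- -\delta<\lambda_{c,k}^-$, as claimed. The delicate point is precisely the uniform gap of the third step: one must rule out that a minimizing sequence of competitor sets drifts toward the boundary of $\mathcal{C}_A\cap\mathcal{C}_B$ or toward the fold where $t^+(c,u)$ and $t^-(c,u)$ merge, and this is exactly what $(C2)$ and the coercivity of Lemma \ref{topolopro} supply.
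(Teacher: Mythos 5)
Your argument is correct: the pointwise inequality $\varphi_{c,u}(t^+)<\varphi_{c,u}(t^-)$ from the shape of the fibering map, upgraded to a uniform gap $\delta>0$ on the sublevel set $\{\widetilde{\Lambda}^-(c,\cdot)\le \lambda_{c,k}^-+1\}$ via $(C2)$, the bound $c>c^*\ge c(u)$ (which keeps the two critical points nondegenerate and separated), and the coercivity of Lemma \ref{topolopro}, and then transferred to the minimax levels because every competitor for $\lambda_{c,k}^-$ is also admissible for $\lambda_{c,k}^+$. This is essentially the same quantitative-gap strategy the paper uses, except that the paper delegates the details to \cite[Proposition 5.9, Lemma 5.10, Corollary 5.11]{MR4736027} (adapted to $S_{\mathcal{C}_A\cap\mathcal{C}_B}$), whereas you have written it out self-contained.
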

\begin{proof} The proof follows the same arguments in the proof of \cite[Proposition 5.9]{MR4736027} after some changes. First we need to replace $S$ by use $S_{\mathcal{C}_A\cap \mathcal{C}_B}$. The the estimates on $t_n$, $N(u_n)$ and $A(u_n)$ in the proof of \cite[Lemma 5.10]{MR4736027} follow  from $(C2)$, inequality \eqref{ine+} and condition $(H2)$. Moreover, there is no need to assume that $u_n$ weakly converges to some $u$. Thus we can prove \cite[Corollary 5.11]{MR4736027} and complete the proof.
\end{proof}

We are now in position to prove the main results of this section.

\begin{proof}[Proof of Theorem \ref{thmcurves+}] \strut
	\begin{enumerate}
		\item[$i)$] It follows from Proposition \ref{Proposition 1}.
		\item[$ii)$] It follows from Proposition \ref{Lemma 4}. 
		\item[$iii)$] Fix $\lambda>0$ and $\overline{c}_1\in (c^*,0)$. By Theorem \ref{thm1} we know that $\displaystyle \lim_{k\to \infty}\lambda_{\overline{c}_1,k}^+=\infty$,  so there exists $k_1$ such that $\lambda_{\overline{c}_1,k_1}^+>\lambda$ and thus, by items $i)$ and $ii)$, there exists $c_1\in (\overline{c}_1,0)$ such that $\lambda_{c_1,k_1}^+=\lambda$. Arguing by induction, given a sequence $(\overline{c}_n)\subset (c^*,0)$ such that $\overline{c}_n<\overline{c}_{n+1}$ and $\overline{c}_n\to 0^-$, we can find two sequences $(k_n)$ and $(c_n)$ such that $k_n\to \infty$, $c_n\to 0^-$, $\overline{c}_n<c_n$ and $\lambda=\lambda_{c_n,k_n}^+$ for all $n$. Now suppose that $v_n\in S_{\mathcal{C}_A}$ satisfies $\widetilde{\Lambda}^+(c_n,v_n)=\lambda$. By \eqref{ine+} and $(C2)$ it follows that $\|t^+(c_n,v_n)v_n\|=t^+(c_n,v_n)\to 0$.
	\end{enumerate}
\end{proof}

\begin{proof}[Proof of Theorem \ref{thmcurves-}] \strut
		\begin{enumerate}
			\item[$i)$] It follows from Propositions \ref{Proposition 1} and \ref{pdm}.
		\item[$ii)$] It follows from Proposition \ref{Lemma 4}.
		\item[$iii)$] Fix $\lambda>0$ and $\overline{c}_1\in (c^*,\infty)$.  By Theorem \ref{thm1} we have that $\displaystyle \lim_{k\to \infty}\lambda_{\overline{c}_1,k}^-=\infty$, so we can find $k_1$ such that $\lambda_{\overline{c}_1,k_1}^->\lambda$ and thus, by items $i)$ and $ii)$, there exists $c_1\in (\overline{c}_1,\infty)$ such that $\lambda_{c_1,k_1}^-=\lambda$. Arguing by induction, given a sequence $(\overline{c}_n)\subset (c^*,\infty)$ such that $\overline{c}_n<\overline{c}_{n+1}$ and $\overline{c}_n\to \infty$, we can find, for each $n$, two sequences $(k_n)$ and $(c_n)$ such that $k_n\to \infty$, $c_n\to \infty$, $\overline{c}_n<c_n$ and $\lambda=\lambda_{c_n,k_n}^-$ for all $n$. Now suppose that $v_n\in S_{\mathcal{C}_A\cap \mathcal{C}_B}$ satisfies $\widetilde{\Lambda}^-(c_n,v_n)=\lambda$. Note that 
		\begin{equation*}
		\lim_{n\to \infty}	\Phi_\lambda(t^-(c_n,v_n)v_n)=\lim_{n\to \infty}c_n=\infty, 
		\end{equation*}
and since, by $(C2)$, $(N(t^-(c_n,v_n)v_n)), (A(t^-(c_n,v_n)v_n)), (B(t^-(c_n,v_n)v_n))$ are bounded if $(t^-(c_n,v_n)v_n)$ is bounded, it follows that $\|t^-(c_n,v_n)v_n\|=t^-(c_n,v_n)\to \infty$.
\end{enumerate}
\end{proof}

\subsection{Proof of Theorems \ref{thm2}, \ref{thm3}, \ref{thm4} and \ref{thm5}}
\begin{proof}[Proof of Theorem \ref{thm2}:]
	We apply Theorem \ref{thm1} with two choices of $I$ and $\mathcal{C}$, namely \begin{itemize} \item $I=(c^*,0)$ and $\mathcal{C}=\mathcal{C}_{\mathcal{A}}$
		\item $I=(c^*,c^{**})$ and $\mathcal{C}=\mathcal{C}_{\mathcal{A}} \cap \mathcal{C}_{\mathcal{B}}$.
	\end{itemize}	
		 Proposition \ref{H12} enables us to apply Theorem \ref{thm1}, which yields the existence of $\lambda_{c,k}^+$ and $v_{c,k}$ for any $c^*<c<0$ and $1\le k\le \gamma(\mathcal{S}_{\mathcal{C}_{\mathcal{A}}})=\gamma(\mathcal{C}_{\mathcal{A}})$, and the existence of $\lambda_{c,k}^-$ and $u_{c,k}$ for any $c^*<c<c^{**}$ and $1\le k\le \gamma(\mathcal{C}_{\mathcal{A}}\cap \mathcal{C}_{\mathcal{B}})$. We also note from Lemma \ref{topolopro} that $\lambda_{c,k}^+>0$ and $\lambda_{c,k}^->0$ for the corresponding values of $c$ and $k$. Theorems \ref{thmcurves+} and \ref{thmcurves-} provide the remaining properties of $\lambda_{c,k}^+$, $\lambda_{c,k}^-$, $v_{c,k}$, and $u_{c,k}$.
\end{proof}

\begin{proof} [Proof of Theorem \ref{thm3}] 
	
Let us write $\Phi_\lambda=\Phi_{\lambda,A}$ to stress the dependence of $\Phi_\lambda$ on $A$. Then it suffices to note that $\Phi_{\lambda,A}=\Phi_{-\lambda,-A}$, and apply Theorem \ref{thm2} to the latter functional. This procedure yields the values $\lambda_{c,k}^+(-A)$ and $\lambda_{c,k}^-(-A)$ for appropriate values of $c$ and $k$.
Then $\lambda_{c,k}^-=-\lambda_{c,k}^+(-A)$ and $\lambda_{c,k}^+=-\lambda_{c,k}^-(-A)$ have the desired properties.

\end{proof}

\begin{proof}[Proof of Theorem \ref{thm4}] Since this theorem complements Theorem \ref{thm2}, it is enough to check  items $vi)$ and $viii)$, which follow from Theorem \ref{thmcurves-}.
\end{proof}

\begin{proof} [Proof of Theorem \ref{thm5}] 
We can proceed as in the proof of Theorem \ref{thm3}.
\end{proof}

\section{Applications}\label{applica}
In this section, we prove Theorems \ref{thmapp1} and \ref{thmapp2}.

\begin{proof}[Proof of Theorem \ref{thmapp1}] Indeed, conditions $(C1)$-$(C3)$ are clearly satisfied.  In addition, since $A^+\cap B^+$ is an open subset of $\Omega$, it follows that $\mathcal{C}_A\cap\mathcal{C}_B\cup \{0\}$ contains the infinite dimensional vector space $H_0^1(U)$, where $U$ is an open set contained in $\mathcal{A}^+\cap \mathcal{B}^+$ (we extend functions by zero outside $U$). Therefore $\gamma(\mathcal{C}_A\cap\mathcal{C}_B)=\infty$, and we can apply Theorem \ref{thm2} to obtain all assertions except ii)-(c). 
Now assume that $a\ge 0$, and $\mathcal{A}^0 \subset \mathcal{B}^0$. 
If $(u_n)\subset \mathcal{C}_A$ is a bounded sequence satisfying $A(u_n)\to 0$ then we can assume that $u_n \rightharpoonup u$ with $A(u)=0$. It follows that supp$(u) \subset \mathcal{A}^0 \subset \mathcal{B}^0$ i.e. $B(u)=0$, which shows that condition $(C4)$ holds. Thus we can apply Theorem \ref{thm4} to obtain the desired conclusions.

\end{proof}

\begin{proof}[Proof of Theorem \ref{thmapp1.1}] As in the previous proof, we have now $\gamma(\mathcal{C}_{-A}\cap\mathcal{C}_B)=\infty$, so we apply Theorem \ref{thm3} to get the conclusion.
	
\end{proof}

\begin{proof}[Proof of Theorem \ref{thmapp2}] Conditions $(C1)$ and $(C2)$ follows a before. To prove condition $(C3)$ we note that in \cite[Lemma 2.3]{AmBoPe}, after proving that the sequence is bounded, they conclude that $u_n\to u$, which is exactly what we need. To prove $(C4)$ we argue as in the proof of Theorem \ref{thmapp1}. So we can assume that $(u_n)$ does not converge to zero and $v_n \rightharpoonup v$. However now we argue as in the proof of \cite[Lemma 2.3]{AmBoPe} to conclude that $A(v)=0$, $\operatorname{supp}(v)\subset \{x:\ a(x)\le 0\}$ and $0=B(v)=\lim_{n\to \infty}B(v_n)$. Theorem \ref{thm4} yields the desired conclusions.
\end{proof}

\section{Further examples and discussions} \label{Sfurther}

In this section, we discuss further properties related to problem \eqref{quasi}. Note that in \cite{BrWu}, the authors found two positive solutions to \eqref{quasi} for small values of $\lambda > 0$: one with positive energy and the other with negative energy. By comparing their results with our Theorem \ref{thmapp1.1} (over the cone $\mathcal{C}_A$), we observe that our energy curve $\lambda_{c,1}^+$, for $c \in (c^*, 0)$, corresponds exactly to the positive solutions with negative energy. However, our second energy curve $\lambda_{c,1}^-$, for $c \in (0, c^{**})$, does not correspond exactly to their result, since we do not know, a priori, the value of the limit $\lim_{c \to c^{**}} \lambda_{c,1}^-$. If this limit is zero, then it is clear that this curve yields a positive solution with positive energy for $\lambda>0$ small, and in that case, we would recover the results of \cite{BrWu}.

It is important to note that our method searches for solutions within the cone $\mathcal{C}_A$ and, by its very definition, any solution to problem \eqref{quasi} obtained through our approach will always satisfy $A(u) > 0$. This is not the case in \cite{BrWu}, so it is possible that the solutions with positive energy described there may lie outside the cone $\mathcal{C}_A$ for $\lambda > 0$ and small. In fact, $\lambda$ do not need to be small since the interval identified in \cite{BrWu} for the existence of positive solutions with positive energy can, in fact, be extended to a maximal interval, say $(0, \lambda^*)$, such that there exists $\lambda_0^* \in (0, \lambda^*)$ with the property that the energy is positive for all $\lambda < \lambda_0^*$, vanishes at $\lambda = \lambda_0^*$, and becomes negative for $\lambda > \lambda_0^*$ (see, for example, \cite{Il}). Since solutions with negative energy exist only within the cone $\mathcal{C}_A$, it follows that for larger values of $\lambda$, the sign of $A$ is positive.

Now we show a case where the curve $\lambda_{c,1}^-$, for $c \in (0, c^{**})$, of equation \eqref{quasi} can be extended further to some $c^{***}>c^{**}$ such that $\lambda_{c^{***},1}^-<0$. Indeed, let us go back to the functional $c_0$ introduced in the proof of Lemma \ref{topolopro} - ii), namely
$$c_0(u) :=	 \frac{\beta-\eta}{\eta\beta}\frac{N(u)^{\frac{\beta}{\beta - \eta}}}{B(u)^{\frac{\eta}{\beta-\eta}}} \quad \text{for } u\in \mathcal{C}_B.$$
We assume the following additional condition:

\begin{enumerate}
	\item[(C5)] $N$ is weakly lower semicontinuous and $B$ is weakly continuous.
\end{enumerate}
\begin{lemma}\label{extreatta} Under conditions $(C1)$-$(C5)$, we have that $\displaystyle \inf _{u\in \mathcal{C}_B} c_0(u)$ is achieved. 
\end{lemma}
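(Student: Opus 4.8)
The plan is to apply the direct method in the calculus of variations. First I would observe that, since $N$ is $\eta$-homogeneous and $B$ is $\beta$-homogeneous, the functional $c_0$ is $0$-homogeneous: a direct computation gives $c_0(tu)=c_0(u)$ for all $t>0$ and $u\in\mathcal{C}_B$. Hence
$$\inf_{u\in\mathcal{C}_B}c_0(u)=\inf_{u\in\mathcal{S}\cap\mathcal{C}_B}c_0(u)=:m,$$
and it suffices to produce a minimizer on the sphere. Note that $m\in(0,\infty)$: finiteness follows because $\mathcal{C}_B\neq\emptyset$ and $c_0(u)<\infty$ whenever $B(u)>0$, while positivity follows from $(C2)$, which gives $N(u)\ge C^{-1}$ and $B(u)\le C$ on $\mathcal{S}$.

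Next I would take a minimizing sequence $(u_n)\subset\mathcal{S}\cap\mathcal{C}_B$, so that $\|u_n\|=1$ and $c_0(u_n)\to m$. Since $X$ is uniformly convex, hence reflexive, and $(u_n)$ is bounded, I may pass to a subsequence with $u_n\wto u$ in $X$; refining further, I may also assume $N(u_n)\to\ell$ for some $\ell\ge C^{-1}>0$. The crucial step is to show that the weak limit remains in the open cone, i.e.\ $B(u)>0$. This is where the finiteness of $m$ enters: from
$$B(u_n)^{\frac{\eta}{\beta-\eta}}=\frac{\beta-\eta}{\eta\beta}\,\frac{N(u_n)^{\frac{\beta}{\beta-\eta}}}{c_0(u_n)},$$
together with $N(u_n)\ge C^{-1}$ and $c_0(u_n)\to m<\infty$, one deduces that $B(u_n)$ is bounded below by a positive constant for $n$ large. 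By $(C5)$ the functional $B$ is weakly continuous, so $B(u)=\lim_n B(u_n)>0$; in particular $u\neq 0$ and $u\in\mathcal{C}_B$.

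Finally I would pass to the limit using $(C5)$ once more: weak lower semicontinuity of $N$ gives $N(u)\le\ell$, while weak continuity of $B$ gives $B(u)=\lim_n B(u_n)$. Plugging these into the expression for $c_0$ yields
$$c_0(u)=\frac{\beta-\eta}{\eta\beta}\,\frac{N(u)^{\frac{\beta}{\beta-\eta}}}{B(u)^{\frac{\eta}{\beta-\eta}}}\le\frac{\beta-\eta}{\eta\beta}\,\frac{\ell^{\frac{\beta}{\beta-\eta}}}{\big(\lim_n B(u_n)\big)^{\frac{\eta}{\beta-\eta}}}=\lim_n c_0(u_n)=m.$$
Since $u\in\mathcal{C}_B$, the reverse inequality $c_0(u)\ge m$ holds by definition of the infimum, whence $c_0(u)=m$ and the infimum is attained. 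The main obstacle is precisely ensuring that the weak limit does not leave the open cone $\mathcal{C}_B$, which would make the denominator degenerate; this is overcome by the lower bound on $B(u_n)$ coming from the finiteness of $m$ and the estimate $N\ge C^{-1}$ on the sphere provided by $(C2)$.
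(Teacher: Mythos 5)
Your proposal is correct and follows essentially the same route as the paper: reduce to the unit sphere by $0$-homogeneity, take a weakly convergent minimizing sequence, use $(C2)$ together with the finiteness of the infimum to keep $B(u_n)$ bounded away from zero (the paper phrases this contrapositively as ``if $u=0$ then $c_0(u_n)\to\infty$''), and then pass to the limit via the weak lower semicontinuity of $N$ and weak continuity of $B$ from $(C5)$. No changes needed.
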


\begin{proof} Indeed, it is clear that $c_0$ is a $C^1$ and $0$-homogeneous functional defined over the open cone $\mathcal{C}_B$. Therefore
	\begin{equation*}
	\inf _{u\in \mathcal{C}_B} c_0(u)=\inf _{u\in S_{\mathcal{C}_B}} c_0(u).
	\end{equation*}
	If $(u_n)\subset S_{\mathcal{C}_B}$ is a minimizing sequence, then we can assume that $u_n \rightharpoonup u$, $B(u_n) \to B(u)$ and clearly $u\neq 0$ since, on the contrary, by $(C3)$, $(C5)$ and the expression of $c_0$, we would conclude that $c_0(u_n)\to \infty$, a contradiction. Now observe that
	\begin{equation*}
	c_0(u/\|u\|)=	c_0(u)\le \liminf_{n\to \infty}c_0(u_n)=	\inf _{u\in \mathcal{C}_B} c_0(u),
	\end{equation*}
	and the proof is complete.
\end{proof}
Denote
\begin{equation*}
	M=\{w\in  S_{\mathcal{C}_B}: c_0(w)=\inf _{u\in S_{\mathcal{C}_B}} c_0(u)\}.
\end{equation*}
 
\begin{lemma}\label{Mcompact} Suppose $(C1)$-$(C5)$. Then $M$ is nonempty and sequentially weakly compact.
\end{lemma}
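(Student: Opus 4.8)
The plan is to obtain nonemptiness directly from Lemma~\ref{extreatta}, and then to establish sequential weak compactness by upgrading weak convergence of a sequence of minimizers to \emph{strong} convergence via the compactness hypothesis $(C3)$. Write $m:=\inf_{u\in S_{\mathcal{C}_B}}c_0(u)=\inf_{u\in\mathcal{C}_B}c_0(u)$ (the equality holds because $c_0$ is $0$-homogeneous). By Lemma~\ref{extreatta} this infimum is attained, so $M\neq\emptyset$, and from $(C2)$ one checks that $m>0$ since the powers of $\|u\|$ cancel in the lower bound for $c_0$. Given $(w_n)\subset M$, the sequence is bounded because $\|w_n\|=1$, so along a subsequence $w_n\rightharpoonup w$. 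The difficulty is that weak lower semicontinuity of $N$ together with weak continuity of $B$ (condition $(C5)$) only yields $c_0(w)\le\liminf_n c_0(w_n)=m$, which, combined with $c_0(w)=c_0(w/\|w\|)\ge m$, shows that $w/\|w\|$ is a minimizer but leaves open the possibility that $\|w\|<1$, in which case $w\notin S_{\mathcal{C}_B}$ and hence $w\notin M$. Thus the crux is to prove $\|w\|=1$.

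The key observation is that each $w_n$ is a \emph{free} critical point of $c_0$ on the open cone $\mathcal{C}_B$: since $c_0$ is $0$-homogeneous, a minimizer over $S_{\mathcal{C}_B}$ minimizes $c_0$ over all of $\mathcal{C}_B$, which is open, so $c_0'(w_n)=0$. Differentiating $\ln c_0=\ln K+\tfrac{\beta}{\beta-\eta}\ln N-\tfrac{\eta}{\beta-\eta}\ln B$ (with $K=\tfrac{\beta-\eta}{\eta\beta}$) gives the Euler--Lagrange identity
\begin{equation*}
N'(w_n)=\frac{\eta}{\beta}\,\frac{N(w_n)}{B(w_n)}\,B'(w_n).
\end{equation*}
I would then rescale by $t_n:=t_0(w_n)=\big(N(w_n)/B(w_n)\big)^{1/(\beta-\eta)}$ and set $v_n:=t_nw_n$. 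Using the homogeneity of $N'$ and $B'$ together with the identity above, a direct computation shows that $\Phi_0'(v_n)=0$ exactly, and moreover $\Phi_0(v_n)=c_0(w_n)=m$ (this is precisely the relation $\Phi_0(t_0(u)u)=c_0(u)$ already implicit in the proof of Lemma~\ref{topolopro}).

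Finally I would invoke $(C3)$. By $(C2)$ we have $N(w_n)\in[C^{-1},C']$ and $B(w_n)$ bounded, and since $c_0(w_n)=m>0$ the quantity $B(w_n)$ is also bounded away from $0$; hence $t_n$ is bounded and bounded away from $0$, so $(v_n)$ is bounded and $\Phi_0(v_n)\equiv m$ is bounded. Applying $(C3)$ with $\lambda_n\equiv 0$ yields a subsequence with $v_n\to v$ strongly, and $v\neq 0$ because $\|v_n\|=t_n$ is bounded away from $0$. Consequently $t_n=\|v_n\|\to\|v\|>0$, so $w_n=v_n/t_n\to v/\|v\|=:w$ strongly; in particular $\|w\|=1$, $B(w)=\lim_n B(w_n)>0$, i.e.\ $w\in S_{\mathcal{C}_B}$, and $c_0(w)=\lim_n c_0(w_n)=m$, whence $w\in M$. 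Thus every sequence in $M$ has a subsequence converging (strongly, a fortiori weakly) to a point of $M$, which proves that $M$ is sequentially weakly compact. The main obstacle is exactly the step ruling out $\|w\|<1$; the whole argument hinges on recognizing the minimizers as critical points of $\Phi_0$ after the scaling $t_0(\cdot)$, which is what makes the compactness condition $(C3)$ applicable.
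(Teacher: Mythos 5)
Your proof is correct, but it takes a genuinely different route from the paper's. The paper's own proof is a one-line reduction: it says that a sequence $(u_n)\subset M$ is a minimizing sequence for $\inf_{\mathcal{C}_B}c_0$ and that one "proceeds as in the proof of Lemma \ref{extreatta}" to get $u_n\rightharpoonup u\in M$. But the argument of Lemma \ref{extreatta} only yields $c_0(u)\le\liminf c_0(u_n)$ and then passes to $u/\|u\|$ to exhibit a minimizer on $S_{\mathcal{C}_B}$; applied verbatim to a sequence in $M$ it shows $u/\|u\|\in M$ but does not rule out $\|u\|<1$, in which case the weak limit itself would fail to lie in $M$. You identify exactly this point and close it differently: by $0$-homogeneity each $w_n\in M$ is an interior minimizer of $c_0$ on the open cone $\mathcal{C}_B$, hence a free critical point, and the Euler--Lagrange identity $N'(w_n)=\tfrac{\eta}{\beta}\tfrac{N(w_n)}{B(w_n)}B'(w_n)$ together with the homogeneity of $N'$ and $B'$ shows that $v_n:=t_0(w_n)w_n$ satisfies $\Phi_0'(v_n)=0$ and $\Phi_0(v_n)=c_0(w_n)=c^{**}$; since $(C2)$ and $c_0(w_n)=c^{**}>0$ keep $t_0(w_n)$ bounded and bounded away from zero, condition $(C3)$ applies with $\lambda_n\equiv 0$ and yields \emph{strong} subsequential convergence $v_n\to v\neq 0$, hence $w_n\to v/\|v\|\in M$. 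Your computations check out ($N'$ is $(\eta-1)$-homogeneous, $B'$ is $(\beta-1)$-homogeneous, and the bracket $N(w_n)/B(w_n)-t_0(w_n)^{\beta-\eta}$ vanishes), and your argument actually proves the stronger statement that $M$ is sequentially compact in the norm topology, using only $(C1)$--$(C3)$ for the compactness part (with $(C5)$ entering only through Lemma \ref{extreatta} for nonemptiness). What the paper's route buys is brevity and the fact that in the concrete applications, where $N(u)=\|u\|^{\eta}$, weak lower semicontinuity plus convergence of $N(u_n)$ does force $\|u_n\|\to\|u\|$ and hence $\|u\|=1$ by uniform convexity; what your route buys is a complete argument at the stated level of generality, where $(C2)$ only pins $N$ between two multiples of $\|\cdot\|^{\eta}$.
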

\begin{proof} The fact that $M\neq \emptyset$ follows from Lemma \ref{extreatta}. Now suppose that $(u_n)\subset M$. Then $(u_n)$ is a minimizing sequence for $\displaystyle \inf _{u\in \mathcal{C}_B} c_0(u)$ and we can proceed as in the proof of Lemma \ref{extreatta} to show that $u_n \rightharpoonup u\in M$. It is clear from the proof of Lemma \ref{extreatta} that $M$ is sequentially weakly compact.
\end{proof}

\begin{proposition}\label{c**attained} Assume that $M\subset \mathcal{C}_A$. Then $u\in S_{\mathcal{C}_A\cap\mathcal{C}_B}$ achieves $\lambda_{c^{**},1}^-$  if, and only if, $u\in M$. Moreover $\lambda_{c^{**},1}^-=0$.
\end{proposition}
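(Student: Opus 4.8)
The plan is to reduce the minimax level $\lambda_{c^{**},1}^-$ to a pointwise problem on the sphere $S_{\mathcal{C}_A\cap\mathcal{C}_B}$ and to exploit the strict monotonicity of $c\mapsto\widetilde{\Lambda}^-(c,u)$. First I would record that for $k=1$ the genus requirement is vacuous: using the symmetric pair $\{u,-u\}$ as a competitor and the fact that $\widetilde{\Lambda}^-(c,\cdot)$ is even, one obtains the ground-state characterization
\[
\lambda_{c,1}^-=\inf_{u\in S_{\mathcal{C}_A\cap\mathcal{C}_B}}\widetilde{\Lambda}^-(c,u),
\]
already noted in the Remark following Theorem \ref{thm2}. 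Thus it suffices to analyse $u\mapsto\widetilde{\Lambda}^-(c^{**},u)$ on $S_{\mathcal{C}_A\cap\mathcal{C}_B}$ and to show that it is nonnegative and vanishes exactly on $M$.

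The two ingredients I would assemble are: (i) the identity $\widetilde{\Lambda}^-(c_0(u),u)=0$ for every $u\in\mathcal{C}_A\cap\mathcal{C}_B$, which follows from the Remark after Lemma \ref{topolopro}, since there $t_0(u)=t^-(c_0(u),u)$ while $(t_0(u),c_0(u))$ is, by construction, the unique solution of $\varphi_{c,u}(t)=\varphi'_{c,u}(t)=0$, so that $\varphi_{c_0(u),u}(t_0(u))=0$; and (ii) the strict monotonicity $\partial\widetilde{\Lambda}/\partial c=-1/A(t(c,u)u)<0$ from \eqref{inedecrea}, valid because $A(t^-(c,u)u)>0$ on $\mathcal{C}_A$. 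Combining these, for fixed $u$ the map $c\mapsto\widetilde{\Lambda}^-(c,u)$ is strictly decreasing and crosses zero precisely at $c=c_0(u)$; hence $\widetilde{\Lambda}^-(c,u)>0$ iff $c<c_0(u)$ and $\widetilde{\Lambda}^-(c,u)=0$ iff $c=c_0(u)$.

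Next I would pin down $c^{**}$ under the hypothesis $M\subset\mathcal{C}_A$. Since $M$ is exactly the set of minimizers of $c_0$ over $S_{\mathcal{C}_B}$ (Lemma \ref{Mcompact}) and $M\subset\mathcal{C}_A$, every such minimizer lies in $S_{\mathcal{C}_A\cap\mathcal{C}_B}$, so the infimum of $c_0$ over the smaller cone $\mathcal{C}_A\cap\mathcal{C}_B$ coincides with the global minimum over $\mathcal{C}_B$. By $0$-homogeneity this gives $c^{**}=\inf_{\mathcal{C}_A\cap\mathcal{C}_B}c_0=\min_{\mathcal{C}_B}c_0$, so that $c_0(u)\ge c^{**}$ for all $u\in\mathcal{C}_A\cap\mathcal{C}_B$, with equality (on the sphere) precisely when $u\in M$. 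Evaluating the dichotomy of step (ii) at $c=c^{**}\le c_0(u)$ then yields, for every $u\in S_{\mathcal{C}_A\cap\mathcal{C}_B}$, that $\widetilde{\Lambda}^-(c^{**},u)\ge\widetilde{\Lambda}^-(c_0(u),u)=0$, with strict inequality unless $c_0(u)=c^{**}$, i.e.\ unless $u\in M$. As $M\neq\emptyset$, this simultaneously gives $\lambda_{c^{**},1}^-=0$ and identifies $M$ as the exact set of minimizers, which is the assertion.

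The homogeneity bookkeeping and the crossing identity in (i) are routine. The subtlety I would watch is that $c^{**}$ is the right endpoint of the admissible interval $J^-=(c^*,c^{**})$, where the coercivity property $(H2)$-$(c)$ just fails; one must check that $t^-(c,u)$, and hence $\widetilde{\Lambda}^-(c,u)$, remain well defined at $c=c^{**}$ (they do, as $c^{**}>0$ and for every $c>c^*$ the local maximizer of $\varphi_{c,u}$ persists by Lemma \ref{neharisets}), and that the strict positivity of $\widetilde{\Lambda}^-$ enjoyed for $c<c^{**}$ degenerates exactly into the nonnegativity-with-zero-set-$M$ above. Note that no compactness is required in the final step: the pointwise bound $\widetilde{\Lambda}^-(c^{**},\cdot)\ge0$ together with its vanishing on the nonempty set $M$ already fixes both the value of the infimum and the identification of its minimizers.
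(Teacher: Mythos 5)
Your proposal is correct and follows essentially the same route as the paper: both arguments rest on the identity $\varphi_{c_0(u),u}(t_0(u))=0$ (equivalently $\widetilde{\Lambda}^-(c_0(u),u)=0$, via $t_0(u)=t^-(c_0(u),u)$), on the observation that $M\subset\mathcal{C}_A$ forces $c^{**}=\min_{\mathcal{C}_B}c_0$ so that $c_0(u)\ge c^{**}$ on $\mathcal{C}_A\cap\mathcal{C}_B$ with equality on the sphere exactly on $M$, and on the ground-state characterization of the $k=1$ minimax level. The only difference is the mechanism for the inequality $\widetilde{\Lambda}^-(c^{**},u)\ge 0$: you obtain it by varying $c$ at fixed $u$, using the strict monotonicity $\partial\widetilde{\Lambda}/\partial c=-1/A(t(c,u)u)<0$ from \eqref{inedecrea} to compare the values at $c^{**}$ and at $c_0(u)$, whereas the paper varies $t$ at fixed $c=c^{**}$, using that $t^-(c^{**},u)$ globally maximizes $\varphi_{c^{**},u}$ (as $c^{**}>0$) to get $\Lambda^-(c^{**},u)\ge\varphi_{c^{**},u}(t_0(u))=\alpha\,(c_0(u)-c^{**})/A(t_0(u)u)\ge 0$; both mechanisms are valid and yield the same characterization of the zero set.
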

\begin{proof} Note that 
	$$
	\Lambda^-(c^{**},u)\ge \varphi_{c^{**},u}(t_0(u)) =\alpha \frac{c_0(u)-c^{**}}{A(t_0(u)u)} \ge 0 \quad \forall u \in \mathcal{C}_A\cap\mathcal{C}_B.
	$$
	Hence $\widetilde{\Lambda}^-(c^{**},u)\ge 0$ for all $u\in  S_{\mathcal{C}_A\cap\mathcal{C}_B}$ and if $\widetilde{\Lambda}^-(c^{**},u)= 0$ then $c_0(u)=c^{**}$. Moreover, if $c_0(u)=c^{**}$ then 
	$$\Lambda^-(c^{**},u)= \varphi_{c^{**},u}(t^-(c^{**},u))=\varphi_{c^{**},u}(t^-(c_0(u),u))=\varphi_{c_0(u),u}(t_0(u)) =0,$$
	which completes the proof.
\end{proof}

\begin{lemma}\label{infinterior} Under the conditions of Proposition \ref{c**attained}, there exist $\delta>0$ and $\varepsilon>0$ such that 
	\begin{equation*}
	\inf_{u\in S_{\mathcal{C}_A\cap\mathcal{C}_B}}\widetilde{\Lambda}^-(c,u)=\inf_{u\in S_{\mathcal{C}_A\cap\mathcal{C}_B}\setminus A_\varepsilon}\widetilde{\Lambda}^-(c,u), \ \forall c\in [c^{**},c^{**}+\delta).
\end{equation*}
	where $A_\varepsilon=\{u\in \mathcal{C}_A: \int_\Omega a(x)|u|^q\le \varepsilon\}$. Moreover $M \subset S_{\mathcal{C}_A\cap\mathcal{C}_B}\setminus A_\varepsilon $.
\end{lemma}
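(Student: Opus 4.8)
The plan is to exploit the exact sign characterization of $\widetilde{\Lambda}^-$ in terms of the functional $c_0$. First I would record that, for $c>0$ and $u\in\mathcal{C}_A\cap\mathcal{C}_B$, the value $t^-(c,u)$ is the \emph{global} maximizer of $\varphi_{c,u}$ (Lemma \ref{neharisets}-ii)(a)), and that
\[
\varphi_{c,u}(t)=\frac{\alpha}{A(u)}\,t^{-\alpha}\big(h_u(t)-c\big),\qquad h_u(t):=\frac{t^{\eta}}{\eta}N(u)-\frac{t^{\beta}}{\beta}B(u),
\]
where $\max_{t>0}h_u=h_u(t_0(u))=c_0(u)$. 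Since $A(u)>0$, taking the maximum over $t>0$ yields
\[
\operatorname{sign}\widetilde{\Lambda}^-(c,u)=\operatorname{sign}\big(c_0(u)-c\big),\qquad u\in\mathcal{C}_A\cap\mathcal{C}_B,\ c\ge 0,
\]
so that $\widetilde{\Lambda}^-(c,u)>0$ exactly when $c<c_0(u)$. This equivalence is the engine of the whole argument.

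Next I would fix $\varepsilon$. Because $M\subset\mathcal{C}_A$ by assumption, $A>0$ on $M$; since $A$ is weakly continuous for \eqref{quasi} and $M$ is sequentially weakly compact (Lemma \ref{Mcompact}), $A$ attains a positive minimum $\mu:=\min_{w\in M}A(w)>0$. Choosing $0<\varepsilon<\mu$ immediately gives the ``Moreover'' assertion $M\subset S_{\mathcal{C}_A\cap\mathcal{C}_B}\setminus A_\varepsilon$, where I read $A_\varepsilon=\{u:A(u)\le\varepsilon\}$.

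The heart of the proof is to show that $c_\varepsilon:=\inf\{c_0(u):u\in S_{\mathcal{C}_A\cap\mathcal{C}_B},\ A(u)\le\varepsilon\}>c^{**}$. I would first note that, under $M\subset\mathcal{C}_A$, one has $\inf_{S_{\mathcal{C}_B}}c_0=\inf_{S_{\mathcal{C}_A\cap\mathcal{C}_B}}c_0=c^{**}$, with this infimum attained precisely on $M$; in particular $c_0\ge c^{**}$ on $A_\varepsilon$, so $c_\varepsilon\ge c^{**}$. If equality held, a minimizing sequence $(u_n)$ with $A(u_n)\le\varepsilon$ would be minimizing for $c_0$ over $S_{\mathcal{C}_B}$, hence (arguing as in Lemmas \ref{extreatta} and \ref{Mcompact}) $u_n\rightharpoonup u\in M$ along a subsequence; weak continuity of $A$ would then force $A(u)\le\varepsilon<\mu\le A(u)$, a contradiction. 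Thus $c_\varepsilon>c^{**}$, and I set $\delta:=c_\varepsilon-c^{**}>0$. This compactness step is the main obstacle, as it is exactly where the hypothesis $M\subset\mathcal{C}_A$ and the weak continuity of $A$ for problem \eqref{quasi} enter.

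Finally, for $c\in[c^{**},c^{**}+\delta)=[c^{**},c_\varepsilon)$ I would conclude by a sign comparison. If $u\in S_{\mathcal{C}_A\cap\mathcal{C}_B}$ with $A(u)\le\varepsilon$, then $c<c_\varepsilon\le c_0(u)$, so $\widetilde{\Lambda}^-(c,u)>0$ and therefore $\inf_{A_\varepsilon}\widetilde{\Lambda}^-(c,\cdot)\ge 0$. On the other hand, picking any $w\in M\subset S_{\mathcal{C}_A\cap\mathcal{C}_B}\setminus A_\varepsilon$ gives $c_0(w)=c^{**}\le c$, whence $\widetilde{\Lambda}^-(c,w)\le 0$ and $\inf_{S_{\mathcal{C}_A\cap\mathcal{C}_B}\setminus A_\varepsilon}\widetilde{\Lambda}^-(c,\cdot)\le 0$. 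Combining the two estimates yields
\[
\inf_{S_{\mathcal{C}_A\cap\mathcal{C}_B}\setminus A_\varepsilon}\widetilde{\Lambda}^-(c,\cdot)\le 0\le\inf_{A_\varepsilon}\widetilde{\Lambda}^-(c,\cdot),
\]
so deleting $A_\varepsilon$ cannot lower the infimum over $S_{\mathcal{C}_A\cap\mathcal{C}_B}$, which is precisely the claimed identity.
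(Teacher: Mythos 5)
Your proof is correct and follows essentially the same route as the paper: both arguments hinge on the identity $\varphi_{c,u}(t_0(u))=\alpha\,(c_0(u)-c)/A(t_0(u)u)$ to force $\widetilde{\Lambda}^-(c,\cdot)\ge 0$ on $A_\varepsilon$ once weak compactness of $M$ (and $M\subset\mathcal{C}_A$) separates $A_\varepsilon$ from the minimizing set of $c_0$, and both then compare with the nonpositive value attained at $w\in M\subset S_{\mathcal{C}_A\cap\mathcal{C}_B}\setminus A_\varepsilon$. Your packaging of this as the exact sign equivalence $\operatorname{sign}\widetilde{\Lambda}^-(c,u)=\operatorname{sign}(c_0(u)-c)$ is a clean cosmetic improvement that also absorbs the endpoint case $c=c^{**}$, which the paper handles separately via Proposition \ref{c**attained}.
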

\begin{proof} From Lemma \ref{Mcompact} it is clear that there exists $\varepsilon>0$ such that
	\begin{equation*}
		c^{**}=\inf_{u\in S_{\mathcal{C}_A\cap\mathcal{C}_B}\setminus A_\varepsilon}c_0(u),
	\end{equation*}
	or, equivalently,
	\begin{equation*}
		\inf_{u\in S_{\mathcal{C}_A\cap\mathcal{C}_B}\cap A_\varepsilon}c_0(u)>c^{**}+\delta_1,
	\end{equation*}
	where $\delta_1>0$ depends on $\varepsilon$. Arguing as in the proof of \eqref{inec0} we conclude that
	\begin{equation*}
		\Lambda^-(c,u)\ge \alpha\frac{c_0(u)-c}{A(t_0(u)u)}>\alpha\frac{c^{**}+\delta_1-c}{A(t_0(u)u)},\ u\in S_{\mathcal{C}_A\cap\mathcal{C}_B}\cap  A_\varepsilon.
	\end{equation*}
Therefore
	\begin{equation}\label{kkkinq}
	\inf_{u\in S_{\mathcal{C}_A\cap\mathcal{C}_B}\cap  A_\varepsilon} \Lambda^-(c,u)\ge 0,\ c\in (c^{**}-\delta,c^{**}+\delta),
\end{equation}
where $\delta\in (0,\delta_1)$. Now note, by definition of $c^{**}$, that 
\begin{equation*}
		\inf_{u\in S_{\mathcal{C}_A\cap\mathcal{C}_B}}\widetilde{\Lambda}^-(c,u)< 0,\ \forall c> c^{**},
\end{equation*}
which, combined with \eqref{kkkinq}, implies that
	\begin{equation*}
	\inf_{u\in S_{\mathcal{C}_A\cap\mathcal{C}_B}}\widetilde{\Lambda}^-(c,u)=\inf_{u\in S_{\mathcal{C}_A\cap\mathcal{C}_B}\setminus A_\varepsilon}\widetilde{\Lambda}^-(c,u), \ \forall c\in (c^{**},c^{**}+\delta).
\end{equation*}
The case $c=c^{**}$ follows from Proposition \ref{c**attained} and the fact that $M \subset S_{\mathcal{C}_A\cap\mathcal{C}_B}\setminus A_\varepsilon $ (which is clear from the definition of $\varepsilon$). 

\end{proof}

\begin{proposition}\label{c***}  Under the conditions of Proposition \ref{c**attained} there exists $\delta>0$ such that the curve $\lambda_{c,1}^-$ is well defined for all $c\in[c^{**},c^{**}+\delta)$. Moreover, the infimum $\lambda_{c,1}^-$ is attained, and satisfies $\lambda_{c^{**},1}^-=0$, and $\lambda_{c,1}^-<0$ if $c\in(c^{**},c^{**}+\delta)$, and $\lim_{c\to (c^{**})^-}\lambda_{c,1}^-=\lambda_{c^{**},1}^-=0$.
\end{proposition}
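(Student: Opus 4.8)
The plan is to exploit the localization provided by Lemma~\ref{infinterior}, which for $c\in[c^{**},c^{**}+\delta)$ reduces the computation of $\lambda_{c,1}^-=\inf_{u\in S_{\mathcal{C}_A\cap\mathcal{C}_B}}\widetilde{\Lambda}^-(c,u)$ to an infimum over $S_{\mathcal{C}_A\cap\mathcal{C}_B}\setminus A_\varepsilon$, where $A(u)>\varepsilon$ is bounded away from zero. On this set the obstruction that prevents $(H2)$ from holding when $c>c^{**}$ (the loss of coercivity as $A(u)\to0$) disappears, so I expect to recover enough compactness for a direct minimization. The case $c=c^{**}$, namely $\lambda_{c^{**},1}^-=0$ with minimizers $M$, is already given by Proposition~\ref{c**attained}; it remains to treat $c\in(c^{**},c^{**}+\delta)$ and the left limit at $c^{**}$. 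Throughout, $c$ is near $c^{**}>0$, so $\widetilde{\Lambda}^-(c,\cdot)=\max_{t>0}\varphi_{c,\cdot}(t)$ by Lemma~\ref{neharisets}\,(ii)(a).

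First I would show that $\lambda_{c,1}^-$ is finite and negative on $(c^{**},c^{**}+\delta)$. For boundedness below, on $S_{\mathcal{C}_A\cap\mathcal{C}_B}\setminus A_\varepsilon$ I use $\widetilde{\Lambda}^-(c,u)\ge\varphi_{c,u}(t_0(u))=\alpha\,(c_0(u)-c)/(t_0(u)^\alpha A(u))$ together with $c_0(u)\ge c^{**}$, the bound $A(u)>\varepsilon$, and the fact that $t_0(u)=(N(u)/B(u))^{1/(\beta-\eta)}$ is bounded away from zero on the unit sphere by $(C2)$; this yields $\widetilde{\Lambda}^-(c,u)\ge -C_\delta$, hence $\lambda_{c,1}^->-\infty$. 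For negativity, since $c^{**}=\inf_{\mathcal{C}_A\cap\mathcal{C}_B}c_0$, for $c>c^{**}$ there is $u$ with $c_0(u)<c$; because $\widetilde{\Lambda}^-(c_0(u),u)=\varphi_{c_0(u),u}(t_0(u))=0$ and $c\mapsto\widetilde{\Lambda}^-(c,u)$ is decreasing by Lemma~\ref{Lemma 1}\,(i), I obtain $\widetilde{\Lambda}^-(c,u)<0$, so $\lambda_{c,1}^-<0$.

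Next I would prove attainment. Take a minimizing sequence $(u_n)\subset S_{\mathcal{C}_A\cap\mathcal{C}_B}\setminus A_\varepsilon$, so $\|u_n\|=1$ and $A(u_n)>\varepsilon$, and pass to a weak limit $u_n\rightharpoonup u$. By weak continuity of $A$ and $B$ (available in the applications we have in mind, and, for $B$, guaranteed by $(C5)$), one has $A(u_n)\to A(u)\ge\varepsilon>0$ and $B(u_n)\to B(u)$. If $B(u)=0$, then $t_0(u_n)\to\infty$, $c_0(u_n)\to\infty$ and, since $\alpha<\eta<\beta$, $c_0(u_n)/t_0(u_n)^\alpha\to\infty$, so the lower bound $\widetilde{\Lambda}^-(c,u_n)\ge\alpha\,(c_0(u_n)-c)/(t_0(u_n)^\alpha A(u_n))\to+\infty$ contradicts minimality of a negative value; hence $B(u)>0$ and $u\in\mathcal{C}_A\cap\mathcal{C}_B$. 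As $\widetilde{\Lambda}^-(c,\cdot)=\max_{t>0}\varphi_{c,\cdot}(t)$ is $0$-homogeneous, I evaluate $\varphi_{c,u_n}$ at the maximizer $\bar t$ of $\varphi_{c,u}$ and use the weak lower semicontinuity of $N$ from $(C5)$ together with $A(u_n)\to A(u)$, $B(u_n)\to B(u)$ to get $\liminf_n\widetilde{\Lambda}^-(c,u_n)\ge\liminf_n\varphi_{c,u_n}(\bar t)\ge\varphi_{c,u}(\bar t)=\widetilde{\Lambda}^-(c,u/\|u\|)$. Since the reverse inequality is the definition of the infimum, $w:=u/\|u\|$ attains $\lambda_{c,1}^-$.

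Finally, for the left limit I use monotonicity. By Lemma~\ref{Lemma 1}\,(i) the map $c\mapsto\widetilde{\Lambda}^-(c,u)$ is decreasing and, by Proposition~\ref{c**attained}, $\widetilde{\Lambda}^-(c^{**},u)\ge0$ for every $u\in S_{\mathcal{C}_A\cap\mathcal{C}_B}$; hence $\widetilde{\Lambda}^-(c,u)\ge\widetilde{\Lambda}^-(c^{**},u)\ge0$ for $c<c^{**}$, which gives $\lambda_{c,1}^-\ge0$ and, by monotonicity, that $L:=\lim_{c\to(c^{**})^-}\lambda_{c,1}^-$ exists with $L\ge0$. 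Fixing $w^*\in M\subset\mathcal{C}_A\cap\mathcal{C}_B$, the bound $\lambda_{c,1}^-\le\widetilde{\Lambda}^-(c,w^*)$ together with the continuity of $c\mapsto\widetilde{\Lambda}^-(c,w^*)$ yields $L\le\widetilde{\Lambda}^-(c^{**},w^*)=0$, so $L=0=\lambda_{c^{**},1}^-$. The main obstacle is the attainment step: for $c>c^{**}$ the functional $\widetilde{\Lambda}^-(c,\cdot)$ takes negative values and $(H2)$ fails globally, so the argument genuinely relies on Lemma~\ref{infinterior} to confine minimizing sequences to $\{A>\varepsilon\}$ and on ruling out the vanishing of $B$ along the weak limit; verifying that these two facts together restore the compactness lost near the ``boundary'' $A\to0$ is the delicate point.
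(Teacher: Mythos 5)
Your proof is correct and follows essentially the same route as the paper's: localization via Lemma~\ref{infinterior}, the direct method with $(C5)$ for attainment of the infimum, and a test element of $M$ for the left limit at $c^{**}$. The only cosmetic differences are that you rule out $B(u)=0$ by letting $c_0(u_n)/t_0(u_n)^{\alpha}$ blow up in the lower bound rather than invoking \eqref{N-bounded} together with the boundedness of $t^-(c,u_n)$, and that you explicitly flag the weak continuity of $A$, which the paper uses implicitly (through its unstated condition $(C6)$).
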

\begin{proof} We could adapt the arguments of Section \ref{abstractarguments}, but instead, let us give a more straightforward argument. The case $c=c^{**}$ was treated in Proposition \ref{c**attained}. It is clear, by definition of $c_0$, that $\lambda_{c,1}^-<0$ if $c\in(c^{**},c^{**}+\delta)$. Suppose $u_n\in  S_{\mathcal{C}_A\cap\mathcal{C}_B}$ is a minimizing sequence to $\lambda_{c,1}^-$. By Lemma \ref{infinterior} we can assume that $\int_\Omega a(x)|u_n|^qdx >\varepsilon$ for all $n$. Therefore we can assume that $u_n \rightharpoonup u\neq 0$. Lemma \ref{boundary} implies that $t^-(c,u_n)$ is bounded away from zero. We also have, as in the proof of Lemma \ref{topolopro}, that $t^-(c,u_n)$ is bounded, so we can suppose that $t^-(c,u_n)u_n \rightharpoonup tu$, where $t>0$. Moreover, it follows from \eqref{N-bounded} that $\int_\Omega b(x)|u|^pdx>0$. Now observe, from condition $(C6)$  and Lemma \ref{neharisets}, that (here we write $t_n=t^-(c,u_n)$ for simplicity)
	
	\begin{eqnarray*}
	\Lambda^-(c,t^-(c,u)u)&\le & \liminf_{n\to \infty}	\Lambda^-(c,t^-(c,u)u_n) \\
	&\le &  \liminf_{n\to \infty}	\Lambda^-(c,t_nu_n) \\
	&=& \lambda_{c,1}^-,
	\end{eqnarray*}
	which implies that $	\Lambda^-(c,t^-(c,u)u)= \lambda_{c,1}^-$. 
	
	Now we prove that $\lim_{c\to (c^{**})^-}\lambda_{c,1}^-=0$. Indeed, fix $w\in M$ and note that
	\begin{equation*}
		\lim_{c\to (c^{**})^-}\widetilde{\Lambda}^-(c,w)=0.
	\end{equation*}
Since
	\begin{equation*}
	0<\lambda_{c,1}^-=	\inf_{u\in S_{\mathcal{C}_A\cap\mathcal{C}_B}}\widetilde{\Lambda}^-(c,u)\le \widetilde{\Lambda}^-(c,w), \ \forall c\in (c^{**}-\delta,c^{**}),
	\end{equation*} 
it follows that $\lim_{c\to (c^{**})^-}\lambda_{c,1}^-=0$ and the proof is complete.
\end{proof}

Finally we show that the condition $M\subset \mathcal{C}_A$ is satisfied for the problem \eqref{quasi} with some suitable $a\in L^\infty(\Omega)$. Recall that in Section \ref{applica} we proved that conditions $(C1)$-$(C4)$ are satisfied. It is also clear that $(C5)$ is satisfied.  

\begin{lemma}\label{c^**A>0} Assume that $b^+$ vanishes in an open ball $B \subset \Omega$. Then there exists $a\in L^\infty(\Omega)$ such that $M\subset \mathcal{C}_A$ and thus $	\inf _{u\in \mathcal{C}_B} c_0(u)=c^{**}$.
\end{lemma}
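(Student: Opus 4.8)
The key observation is that the minimizing set $M$ is determined entirely by $c_0$, hence by $N$ and $B$ alone, and does \emph{not} depend on the weight $a$. The plan is therefore to treat $M$ as fixed --- it is nonempty and sequentially weakly compact by Lemma~\ref{Mcompact} --- and then to build $a$ around it so as to force $A>0$ on all of $M$.

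First I would isolate the two quantitative facts about $M$ on which everything rests. Writing $m(w):=\int_{\mathcal{B}^+}|w|^\alpha\,dx$, I claim that $m_0:=\inf_{w\in M}m(w)>0$. Indeed, each $w\in M$ belongs to $S_{\mathcal{C}_B}$, so $B(w)>0$; since $b\le 0$ on $\Omega\setminus\mathcal{B}^+$, this gives $\int_{\mathcal{B}^+}b|w|^\beta\,dx>0$, whence $w\not\equiv 0$ on $\mathcal{B}^+$ and thus $m(w)>0$. Moreover $w\mapsto m(w)$ is weakly continuous on $W_0^{1,p}(\Omega)$ because the embedding $W_0^{1,p}(\Omega)\hookrightarrow L^\alpha(\Omega)$ is compact (recall $\alpha<p<p^*$). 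As $M$ is sequentially weakly compact, the infimum $m_0$ is attained at some $w_\ast\in M$, so $m_0=m(w_\ast)>0$. On the other hand, since $M\subset S$ the Sobolev embedding yields a constant $C>0$ with $\int_B|w|^\alpha\,dx\le \|w\|_{L^\alpha(\Omega)}^\alpha\le C$ for every $w\in M$.

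Next I would define the weight. Let $a:=\chi_{\mathcal{B}^+}-t\,\chi_B$, where $\chi_{\mathcal{B}^+}$ and $\chi_B$ are the characteristic functions of $\mathcal{B}^+$ and of the ball $B$, respectively, and $t>0$ is to be fixed. Since $b^+\equiv 0$ on $B$ we have $B\cap\mathcal{B}^+=\emptyset$, so $a\in L^\infty(\Omega)$, $\mathcal{A}^+=\mathcal{B}^+$, and $\mathcal{A}^-=B$; the positive and negative parts of $a$ therefore sit on disjoint sets. For $w\in M$ this gives $A(w)=m(w)-t\int_B|w|^\alpha\,dx\ge m_0-tC$, and choosing $0<t<m_0/C$ forces $A(w)>0$ for every $w\in M$, that is $M\subset\mathcal{C}_A$.

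The remaining identity is then immediate: from $\mathcal{C}_A\cap\mathcal{C}_B\subset\mathcal{C}_B$ one has $\inf_{\mathcal{C}_B}c_0\le c^{**}$, while $M\subset\mathcal{C}_A\cap\mathcal{C}_B$ together with the fact that the infimum of $c_0$ over $\mathcal{C}_B$ is attained on $M$ (Lemma~\ref{extreatta}) yields the reverse inequality, so $\inf_{\mathcal{C}_B}c_0=c^{**}$. The only non-routine step is the strict positivity $m_0>0$: it is exactly here that the weak compactness of $M$ and the compactness of $W_0^{1,p}(\Omega)\hookrightarrow L^\alpha(\Omega)$ enter, and it is the hypothesis $b\le 0$ on $B$ that lets me place the negative part of $a$ on $B$ without eroding the positive contribution $m(w)$ --- which is what keeps the example genuinely sign-changing (so that $c^{**}$ is a finite threshold) while still preserving $M\subset\mathcal{C}_A$.
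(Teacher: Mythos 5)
Your proof is correct and follows essentially the same strategy as the paper's: both take $a$ to be a nonnegative weight supported on $\mathcal{B}^+$ (you use $\chi_{\mathcal{B}^+}$, the paper uses $b^+$) minus a small nonnegative bump supported in $B$, and both rely on the sequential weak compactness of $M$ from Lemma~\ref{Mcompact} together with the compactness of $W_0^{1,p}(\Omega)\hookrightarrow L^\alpha(\Omega)$ to guarantee that $A>0$ uniformly on $M$ for a small enough perturbation. The only difference is presentational: you establish the uniform lower bound $m_0>0$ directly and then choose $t<m_0/C$, whereas the paper obtains the same conclusion by a contradiction argument with $\varepsilon_n\to 0$.
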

\begin{proof} It is clear that $\int_\Omega b^+(x)|u|^{\alpha}dx >0$ for all $u\in M$. We fix a non-negative and non-trivial $\theta\in C_0^\infty(B)$ and extend it by zero over $\Omega$. Given $\varepsilon\ge 0$, define $a_\varepsilon(x)=b^+(x)-\varepsilon\theta(x)$. We claim that there exists $\varepsilon>0$ such that $\int_\Omega a_\varepsilon(x)|u|^{\alpha}dx >0$ for all $u\in M$. On the contrary, we can find a sequence $\varepsilon_n\to 0$ and $u_n\in M$ such that $\int_\Omega a_{\varepsilon_n}(x)|u_n|^{\alpha}dx \le 0$ for all $n\in \mathbb{N}$. By Lemma \ref{Mcompact} we can assume  that $u_n \rightharpoonup u$, so that $\int_\Omega b^+(x)|u|^{\alpha}dx \le 0$. However, this is a contradiction, since by Lemma \ref{Mcompact} we must have that $u\in M$ and $\int_\Omega b^+(x)|u|^{\alpha}dx > 0$. Therefore we can find $\varepsilon>0$ satisfying the claim and the desired conclusion holds with $a:=a_\varepsilon$.
\end{proof}

Now we can prove Theorem \ref{thmapp1.1conje}:

\begin{proof}[Proof of Theorem \ref{thmapp1.1conje}] The existence of $c^{***}$ follows from Proposition \ref{c***}. The continuous and decreasing behavior of $c \mapsto \lambda_{c,1}^-$ can be proved in the same was as in Section \ref{curvesbeha}, or one can argue directly from the definitions.
\end{proof}

As observed in Remark \ref{rmkcomplete} we believe that $\lambda_{c,1}^-$ (blue curve) can be joined to $\lambda_{c,1}^+$ (red curve), see Figure \ref{fig:combinedCC}. This claim is also supported by \cite[Theorem 1.1]{KaQuUm}.

\appendix

\section{A deformation lemma}

In this appendix we prove a deformation lemma that will be used in this work. In fact, it is a straightforward consequence of the results of \cite{CoDeMa}, however, we will write the details here for the reader's convenience. In this section, we use the same notation of \cite{CoDeMa}. Let $(X,d)$ be a metric space and $f:X\to \mathbb{R}$ a continuous function. We will need the following condition

\begin{enumerate}
	\item[(CS)] If $u_n\in X$ is a not convergent Cauchy sequence, then $f(u_n)\to \infty$. 
\end{enumerate}

\begin{theorem}[Deformation Lemma] \label{dl} Suppose $(CS)$. Fix $c\in \mathbb{R}$ and assume $f$ satisfies the Palais--Smale condition at level $c$. Then, given $\overline{\varepsilon}>0$, $\mathcal{O}$ a neighborhood of $K_c$ (if $K_c=\emptyset$, we allow $\mathcal{O}=\emptyset$) and $\lambda>0$, there exist $\varepsilon>0$ and $\eta:X\times[0,1]\to X$ continuous with:
	\begin{enumerate}
		\item[i)] $d(\eta(u,t),u)\le \lambda t$; 
		\item[ii)] $f(\eta(u,t))\le f(u)$; 
		\item[iii)] if $f(u)\notin (c-\overline{\varepsilon},c+\overline{\varepsilon})$, then $\eta(u,t)=u$; 
		\item[iv)] $\eta(f^{c+\varepsilon}\setminus\mathcal{O},1)\subset f^{c-\varepsilon}$.
	\end{enumerate}
\end{theorem}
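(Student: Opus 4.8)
The plan is to reduce the possibly noncomplete space $X$ to a complete sublevel set, quote the deformation result of \cite{CoDeMa} there, and glue back the identity outside. Condition $(CS)$ is tailored precisely to supply the completeness that is otherwise missing.

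The first step is to extract from $(CS)$ the following fact: for every $a\in\mathbb{R}$ the sublevel set $f^a=\{u\in X:\ f(u)\le a\}$ is complete. Indeed, if $(u_n)\subset f^a$ is Cauchy then $(f(u_n))$ is bounded, so by $(CS)$ the sequence cannot be nonconvergent; hence $u_n\to u$ in $X$, and continuity of $f$ gives $f(u)\le a$, i.e. $u\in f^a$. In particular $Y:=f^{c+\overline{\varepsilon}}$ is a complete metric space, and $K_c\subset Y$ since critical points at level $c$ satisfy $f=c<c+\overline{\varepsilon}$.

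Next I would apply the deformation lemma of \cite{CoDeMa} on the complete space $Y$. The restriction $f|_Y$ is continuous and satisfies the Palais--Smale condition at level $c$: any sequence $(u_n)\subset Y$ with $f(u_n)\to c$ and weak slope tending to $0$ is a Palais--Smale sequence for $f$ in $X$, hence has a subsequence converging to some $u$, and $u\in Y$ because $Y$ is closed. Here one uses that near the level $c$ the weak slope of $f|_Y$ agrees with that of $f$: at any point with $f(u)<c+\overline{\varepsilon}$ the set $Y$ contains a full neighborhood of $u$ in $X$, so the admissible deformations entering the definition of the weak slope coincide; the only discrepancy occurs on $\{f=c+\overline{\varepsilon}\}$, which lies above $c$ and is fixed anyway. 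Taking $\mathcal{O}\cap Y$ as neighborhood of $K_c$ in $Y$, \cite{CoDeMa} furnishes $\varepsilon\in(0,\overline{\varepsilon})$ and a continuous $\eta_0:Y\times[0,1]\to Y$ satisfying $(i)$--$(iv)$ relative to $Y$, where $(iii)$ fixes every $u\in Y$ with $f(u)\notin(c-\overline{\varepsilon},c+\overline{\varepsilon})$.

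Finally I would extend $\eta_0$ to $X$ by the identity, setting $\eta=\eta_0$ on $Y\times[0,1]$ and $\eta(u,t)=u$ when $f(u)>c+\overline{\varepsilon}$. The two prescriptions agree on the overlap $\{f=c+\overline{\varepsilon}\}$, where $\eta_0$ is the identity by $(iii)$; since $Y\times[0,1]$ and $\{f\ge c+\overline{\varepsilon}\}\times[0,1]$ are closed and cover $X\times[0,1]$, the pasting lemma shows $\eta$ is continuous. Properties $(i)$, $(ii)$ and $(iii)$ transfer immediately, and for $(iv)$ note $f^{c+\varepsilon}\subset Y$ with $f^{c+\varepsilon}\setminus\mathcal{O}=f^{c+\varepsilon}\setminus(\mathcal{O}\cap Y)$, so $\eta(f^{c+\varepsilon}\setminus\mathcal{O},1)=\eta_0(f^{c+\varepsilon}\setminus(\mathcal{O}\cap Y),1)\subset f^{c-\varepsilon}$. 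The main obstacle is the weak-slope bookkeeping in the third step: one must verify that passing to $Y$ alters neither the Palais--Smale condition nor the slope estimates near level $c$, which is exactly why the construction is confined to the complete sublevel set rather than attempted directly on all of $X$.
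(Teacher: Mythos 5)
Your proof is correct, but it takes a genuinely different route from the paper's. You observe that $(CS)$ makes every sublevel set $f^a$ complete, restrict to the complete space $Y=f^{c+\overline{\varepsilon}}$, apply the Corvellec--Degiovanni--Marzocchi deformation theorem there as a black box, and glue with the identity; the identification $|d(f|_Y)|(u)=|df|(u)$ at points with $f(u)<c+\overline{\varepsilon}$ (so that $(\text{PS})_c$ and the critical set at level $c$ are unchanged by the restriction) together with property $iii)$, which makes the deformation the identity on $\{f=c+\overline{\varepsilon}\}$, renders both the reduction and the pasting legitimate. The paper instead reopens the proof of the local deformation lemma \cite[Theorem 2.11]{CoDeMa} and patches the single point where completeness of $X$ is invoked: there a Cauchy sequence $\eta_h(u,\tau_h(u))$ arises along the flow, and since $f(\eta_h(u,\tau_h(u)))\le f(u)$ stays bounded, condition $(CS)$ forces it to converge. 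Both arguments exploit the same mechanism --- the deformation never increases $f$, so $(CS)$ supplies exactly the limits that completeness would otherwise provide --- but yours does so externally, keeping \cite{CoDeMa} entirely as a black box at the price of the weak-slope bookkeeping on the restriction, while the paper's is an internal one-line surgical patch that yields the statement on all of $X$ directly.
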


The proof of Theorem \ref{dl} can be achieved in the same way as in the proof of \cite[Theorem 2.14]{CoDeMa}, as long as we prove

\begin{lemma} Suppose $(CS)$. Assume $C$ is a closed subset of $X$ and $\delta,\sigma>0$ such that
	\begin{equation*}
		\mbox{if}\ d(u,C)\le \delta,\ \mbox{then}\ |df|(u)>\sigma.
	\end{equation*}
Then there exists a continuous map $\eta:X\times [0,\delta]\to X$ such that
\begin{enumerate}
		\item[i)] $d(\eta(u,t),u)\le t$; 
		\item[ii)] $f(\eta(u,t))\le f(u)$; 
		\item[iii)] if $d(u,C)\ge \delta$, then $\eta(u,t)=u$; 
		\item[iv)] if $u\in C$, then $f(\eta(u,t))\le f(u)-\sigma t$.
	\end{enumerate}
\end{lemma}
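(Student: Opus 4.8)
The plan is to follow the construction of the corresponding local deformation result in \cite{CoDeMa}, and to verify that the single place where completeness of $X$ is used there can be bypassed by means of condition $(CS)$ together with the monotonicity of $f$ along the deformation. The genuinely new content is concentrated in the convergence argument for the flow; everything else is a transcription of \cite{CoDeMa}.

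First I would exploit the definition of the weak slope. The hypothesis $|df|(u) > \sigma$ on the closed tube $\{u : d(u,C) \le \delta\}$ means, by the very definition of $|df|$, that each such $u$ admits a radius $\rho_u > 0$ and a continuous local deformation $\mathcal{H}_u : B(u,\rho_u) \times [0,\rho_u] \to X$ with $d(\mathcal{H}_u(v,s),v) \le s$ and $f(\mathcal{H}_u(v,s)) \le f(v) - \sigma s$ for all $v \in B(u,\rho_u)$ and $s \in [0,\rho_u]$. These local maps furnish, at every point of the tube, a continuous descent direction of unit speed and rate $\sigma$.

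Next I would glue these local pieces into a single deformation exactly as in \cite{CoDeMa}. Since metric spaces are paracompact, the open cover $\{B(u,\rho_u)\}$ of the open tube $\{d(\cdot,C) < \delta\}$ admits a locally finite refinement carrying a subordinate locally Lipschitz partition of unity; combining it with a Lipschitz cutoff that equals $1$ on $C$ and vanishes on $\{d(\cdot,C) \ge \delta\}$ produces a continuous assignment of infinitesimal descent steps that is switched off outside the tube. Iterating these weighted steps over subdivisions of $[0,t]$ of vanishing mesh and passing to the limit yields the candidate map $\eta(\cdot,t)$; by construction the approximants obey the speed bound (i), the monotonicity (ii) and the localization (iii), and, because the cutoff equals $1$ on $C$ while a trajectory issued from $C$ stays inside the tube for $t \le \delta$, the full descent (iv) for initial points of $C$.

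The main obstacle, and the only real departure from \cite{CoDeMa}, is that the passage to the limit in this iteration relies on completeness of $X$: one must know that the Cauchy sequences generated by refining the subdivisions actually converge. This is exactly where I invoke $(CS)$. Along each approximating trajectory $f$ is non-increasing and hence bounded above by $f(u)$, so the trajectory, and every auxiliary Cauchy sequence produced in the construction, remains in the sublevel set $\{f \le f(u)\}$. A Cauchy sequence in such a set cannot fail to converge, for otherwise $(CS)$ would force $f \to \infty$ along it, contradicting the bound. Thus all the relevant Cauchy sequences converge, the limit $\eta$ is well defined and continuous, and the verification of (i)--(iv) then proceeds verbatim as in \cite{CoDeMa}, completing the proof.
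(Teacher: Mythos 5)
Your proposal is correct and follows essentially the same route as the paper: both reduce the lemma to the construction in Corvellec--Degiovanni--Marzocchi, isolate the unique point where completeness of $X$ is invoked (the convergence of the Cauchy sequences generated along the approximating trajectories), and replace completeness by the observation that such a sequence lies in the sublevel set $\{f\le f(u)\}$, so that condition $(CS)$ would otherwise force $f\to\infty$ along it, a contradiction. The paper merely pinpoints the relevant step more precisely (the claim $\lim_h\tau_h(u)>t$ in the proof of Theorem 2.11 of that reference), but the key new idea is the same.
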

\begin{proof} Indeed, we proceed as in the proof of \cite[Theorem 2.11]{CoDeMa} up to the point where completeness was needed. This happens in the claim that for all $(u,t)$ with $d(u,C)+t\le \delta$, there holds $\lim_{h}\tau_h(u)>t$. If the claim is not true they conclude that $\eta_h(u,\tau_h(u))$ is a Cauchy sequence in $\{v: d(v,c)\le \delta\}$. Now we prove that this sequence converges. In fact, if not, by condition $(CS)$ we know that $\lim_{h}f(\eta_h(u,\tau_h(u)))=\infty$, which contradicts inequality $f(\eta_h(u,\tau_h(u)))\le f(u)$ in \cite[Theorem 2.8]{CoDeMa}. Therefore the claim is true and the proof is complete.
\end{proof}

\newpage
{\bf Acknowledgement.}
This work was initiated while the third author held a post-doctoral position at the Florida Institute of Technology, Melbourne, United States of America, supported by CNPq/Brazil under Grant 201334/2024-0. It was later completed during the second author’s visit to Instituto de Matemática e Estatística of the Universidade Federal de Goiás, supported by FAPEG (Programa Pesquisador Visitante Estrangeiro 2025). The second and third authors are grateful to their host institutions for the warm hospitality extended during these periods. The third author is also partially supported by CNPq/Brazil under Grant 300849/2025-7.

\def\cprime{$''$}

\end{document}